\newcommand{\R}{\mathbb{R}}
\newcommand{\rr}{\R}
\newcommand{\C}{\mathbb{C}}
\newcommand{\cc}{\C}
\newcommand{\fa}{\mathfrak{A}}
\newcommand{\fm}{\mathfrak{M}}
\newcommand{\U}{\widetilde{U}}
\DeclareMathOperator{\re}{Re}
\DeclareMathOperator{\tr}{tr}
\DeclareMathOperator{\rank}{rank}
\DeclareMathOperator{\Ad}{Ad}
\newcommand{\wad}{\widehat{\Ad}}
\newcommand{\pure}{\nu((I - \Lambda)^\frac{1}{2} B (I - \Lambda)^\frac{1}{2})=(f,Bf)}
\newcommand{\cspan}{\mathop{\overline{\mathrm{span}}}}
\newcommand{\hatG}{\widehat{\Gamma}}
\newcommand{\varep}{\varepsilon}
\newtheorem{thm}{Theorem}[section]
\newtheorem{lem}[thm]{Lemma}
\newtheorem{prop}[thm]{Proposition}
\newtheorem{cor}[thm]{Corollary}
\theoremstyle{definition}
\newtheorem{defn}[thm]{Definition}
\newtheorem{remark}[thm]{Remark}
\numberwithin{equation}{section}
\let\oldmarginpar\marginpar
\renewcommand\marginpar[1]{\-\oldmarginpar[\raggedleft\footnotesize #1]%
{\raggedright\footnotesize \textbf{#1}}}
\begin{document}

\title{Gauge groups of $E_0$-semigroups obtained from
Powers weights }
\author{Christopher Jankowski}
\address{Christopher Jankowski, Department of Mathematics,
Ben-Gurion University of the Negev, P.O.B. 653, Beersheva 84105,
Israel.} \email{cjankows@math.bgu.ac.il}
\thanks{The research of the first author was partially supported by the
Skirball Foundation via
the Center for Advanced Studies in Mathematics at Ben-Gurion
University of the Negev. 
}

\author{Daniel Markiewicz}
\address{Daniel Markiewicz, Department of Mathematics,
Ben-Gurion University of the Negev, P.O.B. 653, Beersheva 84105,
Israel.} \email{danielm@math.bgu.ac.il}

\thanks{The research of the second author was partially supported by 
a grant from the U.S.-Israel Binational Science Foundation.
}

\date{February 27, 2011}
\begin{abstract}
The gauge group is computed explicitly for a family of E$_0$-semigroups of type II$_0$ arising from
the boundary weight double construction introduced earlier by Jankowski. This family contains
many E$_0$-semigroups which are not cocycle cocycle conjugate to any examples whose gauge groups have
been computed earlier. Further results are obtained regarding the classification up to cocycle conjugacy
and up to conjugacy for boundary weight doubles $(\phi, \nu)$ in two
separate cases: first in the case when $\phi$ is unital, invertible and $q$-pure and $\nu$ is any type
II Powers weight, and secondly when $\phi$ is a unital $q$-positive map whose range has dimension one and
$\nu(A) = (f, Af)$ for some function $f$ such that  $(1-e^{-x})^{\nicefrac{1}{2}}f(x) \in L^2(0,\infty)$.
All E$_0$-semigroups in the former case are cocycle conjugate to the one arising simply from $\nu$,
and any two E$_0$-semigroups in the latter case  are cocycle conjugate if and only if they are
conjugate.
\end{abstract}
\subjclass[2000]{Primary: 46L55, 46L57}
\keywords{E$_0$-semigroup, CP-semigroup, CP-flow, Cocycle, Gauge Group, $q$-positive}
\maketitle

\section{Introduction}
An E$_0$-semigroup is a weak-$*$ continuous one-parameter semigroup of unital $*$-en\-do\-mor\-phisms of a von Neumann algebra $M$. Despite substantial progress in recent years, the classification theory of E$_0$-semigroups up to cocycle conjugacy, which is the appropriate equivalence relation in this context, remains incomplete even in the case when $M=B(H)$,  the algebra of all bounded operators acting on a separable Hilbert space $H$. We recommend the monograph by Arveson~\cite{arv-monograph} as an excellent introduction to the theory of E$_0$-semigroups.

In this paper we will only consider E$_0$-semigroups acting on $B(H)$ with $H$ separable. We will say that an E$_0$-semigroup is spatial if it has a unit, which is a strongly continuous one-parameter semigroup of intertwining isometries. An E$_0$-semigroup is called completely spatial or type I if it is generated by its units in an appropriate sense. A spatial E$_0$-semigroup which is not completely spatial is also called type II, and non-spatial E$_0$-semigroups are called type III. The type of an E$_0$-semigroup is a cocycle conjugacy invariant which turns out to be very coarse. In the spatial case, Powers~\cite{powers1} suggested a finer invariant called the index, which counts the relative abundance of units. Arveson~\cite{arv-analogues} provided a different presentation of the index, proving that it was a well-defined cocycle conjugacy invariant with values in $\{0, 1, 2, \dots, \infty\}$.  He also proved that it completely classifies the E$_0$-semigroups of type I.

No similar classification is known for E$_0$-semigroups of either type II or type III. The existence of such semigroups was established by Powers~\cite{powers-non-spatial, powers-typeII}, and it was later proved by Tsirelson~\cite{tsirelson-holyoke} that there are uncountably many non-cocycle conjugate semigroups in both of those classes. Bhat-Srinivasan~\cite{bhat-typeIII} have further analyzed the examples constructed by Tsirelson, leading to a better understanding of the resulting semigroups accomplished by Izumi~\cite{izumi-perturbations, izumi-sum-systems} and Izumi-Srinivasan~\cite{izumi-srinivasan} in their study of generalized CCR flows and their product systems.

A cocycle conjugacy invariant for E$_0$-semigroups which has attracted attention recently is the gauge group. In the case of an E$_0$-semigroup of type I$_n$ for $n\geq 1$ (the subscript denotes the index), the gauge group was computed by Arveson~\cite{arv-analogues}: for $K$ a Hilbert space with $\dim K=n$,  the gauge group turns out to be isomorphic as a Polish group to the semidirect product of the Heisenberg group $\mathcal{H}^n$ (homeomorphic to  $\rr\times K$ as a topological space)  by the unitary group $U(K)$. In the case when $n=0$, the gauge group is $\rr$. Further progress was made with the introduction of new techniques. Powers~\cite{powers-CPflows} introduced a construction for \emph{all} spatial E$_0$-semigroups by applying Bhat's dilation theorem~\cite{bhat} to a particular kind of semigroup of completely positive maps called CP-flows. Furthermore, Powers proved that his construction ultimately depends on the choice of a single map, called a boundary weight map. The boundary weight map is an adaptation of the resolvent approach for semigroups to the context of CP-flows.  Alevras, Powers and Price~\cite{alevras-powers-price} used the CP-flow approach to describe the gauge group of a large class of E$_0$-semigroups of type II$_0$, namely all such semigroups arising from boundary weight maps over $\cc$. Their description is given with respect to some parameters, however, which are frequently hard to compute.

Further work has been done in the study of transitivity properties of the action of the gauge group on the set of units of a spatial E$_0$-semigroup. Transitivity is automatic for E$_0$-semigroups of type II$_0$, and both transitivity and $2$-fold transitivity (transitivity on the set of pairs of normalized units with fixed covariance) hold for semigroups of type I.  Markiewicz and Powers~\cite{markiewicz-powers} constructed an example of type II$_1$ for which the action need not be $2$-fold transitive.  Concurrently and independently, Tsirelson~\cite{tsirelson-transitivity} constructed an example for which the action need not even be transitive.

In this work we study in detail E$_0$-semigroups of type II$_0$ obtained from the boundary weight double construction introduced by
Jankowski~\cite{jankowski1}. We obtain three different categories of results regarding the classification of such
E$_0$-semigroups,
including the complete and explicit description of the gauge group of a specific subfamily of semigroups whose elements are not cocycle conjugate to any of the E$_0$-semigroups studied by Alevras, Powers and Price~\cite{alevras-powers-price}.

We  now describe the structure and results of the paper. In Section 2, we present the CP-flows approach of Powers and its application to the construction of E$_0$-semigroups from boundary weight doubles. A boundary weight double is a pair $(\phi, \nu)$, where $\phi: M_n(\cc) \to M_n(\cc)$ is a $q$-positive map and $\nu$ is a Powers weight (see Section~\ref{powers-weights-section} for precise definitions).  It was proven in \cite{jankowski1} that if $\phi$ is unital and $\nu$ is a type II Powers weight, then $(\phi, \nu)$ induces an E$_0$-semigroup of type II$_0$.

We proceed in Section 3 to discuss a set of results that are frequently needed in the comparison theory of CP-flows and in the remainder of the paper. Particular cases of these results have been used earlier in applications of the CP-flows approach of Powers.

In Section 4, we prove that if $\phi$ is unital, invertible and $q$-pure, and
if $\nu$ is a type II Powers weight, then the E$_0$-semigroup
induced by $(\phi, \nu)$ is cocycle conjugate to the E$_0$-semigroup induced by $\nu$ in the sense of \cite{powers-holyoke}. Under additional restrictive assumptions on $\nu$, this result was obtained by Jankowski~\cite{jankowski1}.

We then turn in Section 5 to the classification of E$_0$-semigroups arising from boundary weight
 doubles $(\phi, \nu)$ and $(\phi',\nu)$ in the case when $\phi$ and $\phi'$ have range rank one,
 i.e. $\phi(A)=\rho(A) I$ for some state $\rho$, and in addition $\nu(B) = (f, Bf)$ for some function $f$ such that
 $(1-e^{-x})^{\nicefrac{1}{2}}f(x) \in L^2(0,\infty)$. We prove that $(\phi,\nu)$ and $(\phi',\nu)$
 give rise to E$_0$-semigroups which are cocycle conjugate if and only if, in fact, they are
 conjugate.  This is accomplished as an application of a general result regarding the unitary
 equivalence of boundary weight maps and their corresponding CP-flows.

Finally, in the last section of the paper, we compute the gauge group of the E$_0$-semigroups considered in Section 5. Namely, if $\phi(A)=\rho(A)I$ for some state $\rho$ of $M_n(\cc)$ and $\nu(B) = (f, Bf)$ for some  function $f$ such that $(1-e^{-x})^{\nicefrac{1}{2}}f(x) \in L^2(0,\infty)$, then we prove that the gauge group of the E$_0$-semigroup arising from $(\phi, \nu)$ is isomorphic to $\rr \times \rr \times (U_\rho / \mathbb{T})$, where $U_\rho = \{ W \in U(n) : \rho(WAW^*)=\rho(A), \forall A \in M_n(\cc) \}$ and $\mathbb{T} = \{ zI \in U(n) : z\in \cc, |z|=1 \}$.

The authors thank Robert Powers for his helpful observations and comments during the preparation of this article.

\section{Preliminaries}

\subsection{E$_0$-semigroups and CP-flows}

\begin{defn}  Let $H$ be a separable Hilbert space.
We say a family $\alpha = \{\alpha_t\}_{t \geq 0}$ of
normal completely positive contractions of $B(H)$ into itself is a \emph{CP-semigroup} acting on
$B(H)$ if:

(i)  $\alpha_s \circ \alpha_t = \alpha_{s+t}$ for all $s,t \geq 0$
and $\alpha_0 (A)=A$ for all $A \in B(H)$;

(ii) For each $f,g \in H$ and $A \in B(H)$, the inner product
$(f, \alpha_t(A)g)$ is continuous in $t$;

If $\alpha_t(I) =I$ for all $t \geq 0$, then $\alpha$ is called a \emph{unital} CP-semigroup. When
$\alpha$ is a unital CP-semigroup and in addition the map $\alpha_t$ is an endomorphism for every
$t\geq 0$, then $\alpha$ is called an \emph{E$_0$-semigroup}.
\end{defn}

We have two notions of equivalence for $E_0$-semigroups:

\begin{defn}
An $E_0$-semigroup $\alpha$ acting on $B(H_1)$ is \emph{conjugate} to
an E$_0$-semigroup $\beta$ acting on $B(H_2)$  if there exists
a $*$-isomorphism $\theta$ from $B(H_1)$ onto $B(H_2)$ such that
$\theta \circ \alpha_t = \beta_t \circ \theta$ for all $t \geq 0$.

A strongly continuous family of contractions $\mathcal{W}=\{W_t\}_{t \geq 0}$ acting on $H_2$ is
called a contractive $\beta$-cocycle if $W_t \beta_{t}(W_s)=W_{t+s}$ for all $t,s \geq 0$. A
contractive $\beta$-cocycle $W_t$ is said to be a \emph{local cocycle} if for all $A\in B(H_2)$ and
$t\geq 0$, $W_t\beta_t(A) = \beta_t(A)W_t$.

We say $\alpha$ and $\beta$ are \emph{cocycle conjugate} if there exists
a unitary $\beta$-cocycle $\{W_t\}_{t \geq 0}$ such that the E$_0$-semigroup
acting on $B(H_2)$ given by $\beta'_t(A) = W_t \beta_t(A)W_t^*$ for all $A \in B(H_2)$ and $t\geq 0$
is conjugate to $\alpha$.

The set of all local unitary $\beta$-cocycles forms a multiplicative group with respect to the
pointwise operation $(\mathcal{W} \cdot \mathcal{W}')_t=W_tW_t'$. This is called the \emph{gauge
group} of $\beta$ which we denote by $G(\beta)$.
\end{defn}

Let $K$ be a separable Hilbert space. We will always denote by
$\{S_t\}_{t \geq 0}$ the right shift semigroup on  $K \otimes L^2(0,
\infty)$ (which we identify with the space of
$K$-valued measurable functions on $(0, \infty)$ which are square
integrable):
\begin{equation*}
(S_t f)(x) = \left\{ \begin{matrix}
f(x-t), & x> t; \\
0, & x\leq t.
\end{matrix}
\right.
\end{equation*}

\begin{defn}\label{gflow}
A CP-semigroup $\alpha$ acting on $B(K \otimes L^2(0,\infty))$
is called a \textit{CP-flow} over $K$ if $\alpha_t(A)S_t = S_t A$
for all $A \in B(K \otimes L^2(0,\infty))$ and $t \geq 0$.

When $\alpha$ is both a CP-flow and an E$_0$-semigroup, a contractive $\alpha$-cocycle $\{W_t: t\geq
0\}$ is called a \emph{flow cocycle} if $W_tS_t = S_t$ for all $t\geq 0$. We denote by $G_{flow}(\alpha)$ the subgroup of $G(\alpha)$ consisting of all local unitary flow $\alpha$-cocycles.
\end{defn}

We remark that, as a consequence of Theorem 4.61 of \cite{powers-CPflows} (see also the discussion preceding Theorem~1.31 in \cite{alevras-powers-price}), if $\alpha$ is an E$_0$-semigroup of type II$_0$ which is also a CP-flow, then
$$
G(\alpha)=\{ e^{irt} C_t: r \in \rr,\; C \in G_{flow}(\alpha)\}.
$$
Furthermore, the map $(r, C) \mapsto (e^{irt} C_t)_{t\geq 0}$ denotes a canonical isomorphism from the direct product $\rr \times G_{flow}(\alpha)$ onto $G(\alpha)$.

A dilation of a unital CP-semigroup $\alpha$ acting on $B(K)$ is a pair $(\alpha^d, W)$, where
$\alpha^d$ is an E$_0$-semigroup acting on $B(H)$ and $W:K \to H$ is an isometry such that
$\alpha^d_t(WW^*) \geq WW^*$ for $t > 0$ and furthermore
$$
\alpha_t(A) = W^*\alpha_t^d(WAW^*)W
$$
for all $A \in B(K)$ and $t\geq 0$. The dilation is said to be \emph{minimal} if the
span of the vectors
$$
\alpha_{t_1}^d(WA_1W^*)\alpha_{t_2}^d(WA_2W^*)\cdots\alpha_{t_n}^d(WA_nW^*)Wf
$$
for  $f\in K, A_i \in B(K), i=1,\dots n, n\in \mathbb{N}$ is dense in $H$.
This definition of minimality is due to Arveson (see \cite{arv-monograph} for a detailed discussion
regarding dilations of CP-semigroups). We will often suppress the isometry $W$, and refer to a
minimal dilation $\alpha^d$ instead of $(\alpha^d, W)$.

\begin{thm}[Bhat's dilation theorem]\label{thm:Bhat's-dilation-thm}
Every unital CP-semigroup has a minimal dilation which is unique up to conjugacy.
\end{thm}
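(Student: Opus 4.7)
The plan is to establish existence by an explicit Kolmogorov/GNS-type construction and uniqueness by an intertwining argument on the dense span that defines minimality. Both halves rely heavily on the fact that $\alpha$ is completely positive, not just positive.

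For existence, I would first construct the dilation Hilbert space. On the complex vector space $V$ spanned by formal symbols $[t_1,A_1;\,t_2,A_2;\dots;t_n,A_n;f]$ with $0\le t_1\le\dots\le t_n$, $A_i\in B(K)$, and $f\in K$, define a sesquilinear form by the iterated Stinespring-style rule: the inner product of two such symbols is obtained by inserting $\alpha_{t_i - t_{i-1}}$ between adjacent operator slots, so that for single-symbol strings one recovers $(f,\alpha_{t}(A^*B)g)$. Complete positivity of $\{\alpha_t\}$ forces this form to be positive semi-definite, so quotienting by its kernel and completing yields a Hilbert space $H$ with an isometry $W:K\hookrightarrow H$ embedding the constant (empty-string) vectors. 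Define $\alpha^d_t$ by shifting all time indices by $t$; this is initially only defined on a dense subspace of vectors but must be shown to extend to a normal unital $*$-endomorphism of $B(H)$ satisfying $\alpha_t(A)=W^*\alpha^d_t(WAW^*)W$. Minimality then follows by construction, since the cyclic span in the statement is, by definition, dense in $H$.

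For uniqueness, suppose $(\alpha^d,W)$ on $B(H)$ and $(\beta^d,V)$ on $B(H')$ are two minimal dilations of $\alpha$. Define $U$ on the dense span of vectors
\begin{equation*}
\alpha^d_{t_1}(WA_1W^*)\alpha^d_{t_2}(WA_2W^*)\cdots\alpha^d_{t_n}(WA_nW^*)Wf
\end{equation*}
by sending such a vector to the corresponding one built from $V$ and $\beta^d$. The inner product of any two such vectors reduces, using the intertwining relation $W^*\alpha^d_s(WBW^*)=\alpha_s(B)W^*$ and iterated semigroup/CP computations, to an expression involving only $\alpha$, $f_i$, $A_i$, $t_i$ and not the particular dilation. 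Hence $U$ is well-defined and isometric, and by minimality it extends to a unitary $U:H\to H'$ with $UW=V$. Setting $\theta=\operatorname{Ad}U$ gives the required $*$-isomorphism intertwining $\alpha^d$ and $\beta^d$.

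The main obstacle is the existence half, specifically verifying that $\alpha^d_t$ defined on the pre-Hilbert space of formal symbols actually extends to a bounded $*$-endomorphism of $B(H)$, rather than merely a completely positive semigroup. The standard trick is to show that the shift map preserves the pre-inner product and is multiplicative on a distinguished generating $*$-subalgebra of $B(H)$, then use normality and density to extend. Uniqueness, once the dense-span computation of inner products is in hand, is comparatively routine; the delicate ingredient there is verifying that the inner product of the spanning vectors depends only on the abstract data $(\alpha,K)$ and not on any choice made during the construction of $H$.
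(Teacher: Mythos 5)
This theorem is not proved in the paper at all: it is quoted as a known result of Bhat, with the proof residing in \cite{bhat} (see also Chapter 8 of \cite{arv-monograph}). So there is no in-paper argument to compare against; what you have written is an outline of the standard Bhat--Arveson construction, and the outline is pointed in the right direction. The uniqueness half in particular is essentially the classical argument: the inner products of the spanning vectors are computed from $\alpha$ alone via the compression identity and the increasing-projection property, so the map $U$ is a well-defined isometry with dense range, hence unitary, and ${\rm Ad}_U$ intertwines the dilations.

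However, as a proof the proposal has genuine gaps, two of which you partially acknowledge but do not close. First, in the existence half, the sesquilinear form on formal symbols is not even well defined as stated: two symbols generally carry different time tuples, so one must refine to a common partition by inserting identity operators at the missing times and then verify that the resulting value is independent of the refinement (this consistency uses $\alpha_s\circ\alpha_t=\alpha_{s+t}$ and is the inductive-limit structure at the heart of Bhat's construction); only after that does complete positivity yield positive semi-definiteness, and that step itself is an induction on the number of time points, not a one-line consequence. Second, defining $\alpha^d_t$ by ``shifting time indices'' produces at best an isometric shift of vectors; promoting this to a \emph{normal unital $*$-endomorphism of all of $B(H)$} is the hardest part of the theorem, and your sketch explicitly defers it. Third, in the uniqueness half, to conclude ${\rm Ad}_U\circ\alpha^d_t=\beta^d_t\circ{\rm Ad}_U$ on all of $B(H)$ (rather than merely on the $*$-algebra generated by the operators $\alpha^d_s(WAW^*)$) you need that this generated von Neumann algebra is weakly dense in $B(H)$; density of the cyclic span of vectors says only that $WK$ is cyclic for that algebra, which is strictly weaker, so an additional argument is required. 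None of these gaps indicates a wrong approach --- they are exactly the points where Bhat's and Arveson's proofs do their real work --- but they must be filled for the argument to stand.
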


The following addendum by Powers (Lemma 4.50 of \cite{powers-CPflows}) further clarifies the situation for CP-flows.

\begin{thm}
Every unital CP-flow $\alpha$ has a minimal dilation $\alpha^d$ which is also a CP-flow. We call $\alpha^d$ the minimal flow dilation of the unital CP-flow.
\end{thm}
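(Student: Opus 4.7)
The plan is to start from the E$_0$-semigroup dilation $(\alpha^d,W)$ given by Bhat's theorem above --- with $\alpha^d$ acting on some $B(H)$ and $W:K\otimes L^2(0,\infty)\to H$ an isometry witnessing the minimal dilation --- and refine it to a CP-flow dilation in two steps: (i) construct a strongly continuous semigroup of isometries $\{\widetilde{S}_t\}_{t\ge 0}$ on $H$ that intertwines $\alpha^d$ in the CP-flow sense, and (ii) identify $H\cong K'\otimes L^2(0,\infty)$ for some Hilbert space $K'$ in such a way that $\widetilde{S}_t$ becomes the right shift.

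For step (i), by minimality the space $H$ is the closed linear span of elementary vectors
$$v=\alpha^d_{t_1}(WA_1W^*)\cdots\alpha^d_{t_n}(WA_nW^*)Wf,$$
and I would define
$$\widetilde{S}_tv=\alpha^d_{t+t_1}(WA_1W^*)\cdots\alpha^d_{t+t_n}(WA_nW^*)WS_tf.$$
To check that this is well defined and isometric, one uses the Arveson--Bhat representation of the inner product $(v,w)$: by iteratively applying the dilation compression identity $W^*\alpha^d_s(WBW^*)W=\alpha_s(B)$ together with the $*$-homomorphism property of $\alpha^d_s$, $(v,w)$ reduces to a scalar expression built from the $A_i$'s, $B_j$'s, the vectors $f,g$, and compositions of $\alpha$. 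The shifted inner product $(\widetilde{S}_tv,\widetilde{S}_tw)$ has the same structural form but with all times increased by $t$ and with extra factors of $S_t$ adjacent to $f$ and $g$; the CP-flow identity $\alpha_s(C)S_s=S_sC$ lets these $S_t$ factors be commuted outward and cancelled via $S_t^*S_t=I$, recovering $(v,w)$. The intertwining relation $\alpha^d_t(A)\widetilde{S}_t=\widetilde{S}_tA$ is then immediate on elementary operators $A=\alpha^d_s(WBW^*)$ directly from the defining formula, and extends to arbitrary $A\in B(H)$ by normality of $\alpha^d_t$ and weak density; analogous manipulations give the semigroup property, strong continuity, and the compatibility $\widetilde{S}_tW=WS_t$.

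For step (ii), I would invoke Cooper's theorem, which says that any strongly continuous semigroup of isometries on a Hilbert space splits as a direct sum of a unitary part and a purely shift part, the latter unitarily equivalent to the right shift on some $K'\otimes L^2(0,\infty)$. The unitary part is ruled out using minimality together with the fact that the original shift $S_t$ is purely shift on $K\otimes L^2(0,\infty)$: approximating elements of $\bigcap_{t\ge 0}\widetilde{S}_tH$ by elementary vectors and exploiting the compatibility $\widetilde{S}_tW=WS_t$ together with the intertwining relation, one shows that this intersection must be trivial. Under the resulting identification $H\cong K'\otimes L^2(0,\infty)$ with $\widetilde{S}_t$ the right shift, the intertwining relation becomes precisely the defining condition for $\alpha^d$ to be a CP-flow.

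The main obstacle is the combinatorial bookkeeping in step (i). The factors $\alpha^d_{t_i}(WA_iW^*)$ in an elementary vector occur in arbitrary time order, so one first reduces to the case $t_1\ge\cdots\ge t_n$ via the identity $\alpha^d_s(X)\alpha^d_u(Y)=\alpha^d_u(\alpha^d_{s-u}(X)Y)$ valid for $s\ge u\ge 0$, and then inducts on $n$, peeling off the outermost factor $\alpha^d_{t_n}$ by compression; once this bookkeeping is properly set up, the CP-flow identity provides the cancellation and the rest is routine.
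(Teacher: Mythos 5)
The paper does not actually prove this statement: it is imported verbatim from Powers (Lemma~4.50 of \cite{powers-CPflows}), so there is no in-paper argument to compare against, and your proposal must stand on its own. It does: this is the natural construction, and the key steps all go through. In step (i), after extracting the minimal time $u=\min\{t_i\}$ via $\alpha^d_s(X)\alpha^d_u(Y)=\alpha^d_u(\alpha^d_{s-u}(X)Y)$ you need the compression identity in the slightly stronger form $W^*\alpha^d_u(Z)W=\alpha_u(W^*ZW)$ for \emph{arbitrary} $Z$ (not just $Z=WBW^*$); this holds because $WW^*$ is an increasing projection for $\alpha^d$, so $WW^*\alpha^d_u(Z)WW^*=WW^*\alpha^d_u(WW^*ZWW^*)WW^*$. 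Granting that, the reduced scalar form of $(v,w)$ depends on the times only through their differences except for the single outermost $\alpha_u$, so the uniform shift by $t$ replaces $\alpha_u(D)$ by $\alpha_{u+t}(D)$ with the same $D$, and $\alpha_{u+t}(D)S_t=S_t\alpha_u(D)$ together with $S_t^*S_t=I$ gives isometry, hence well-definedness. The thinnest point of your sketch is the claim that the unitary summand in Cooper's decomposition vanishes; a clean way to close it is: for $\xi\in\bigcap_{t\ge 0}\widetilde{S}_tH$ one has $\xi=\widetilde{S}_t\widetilde{S}_t^*\xi$, so $\alpha^d_t(WW^*)\xi=\widetilde{S}_tWW^*\widetilde{S}_t^*\xi=WS_tS_t^*W^*\xi\to 0$ as $t\to\infty$ (since $S_tS_t^*\to 0$ strongly), while minimality forces $\alpha^d_t(WW^*)\uparrow I$ and hence $\alpha^d_t(WW^*)\xi\to\xi$, so $\xi=0$. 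With that supplied, the identification $H\cong K'\otimes L^2(0,\infty)$ carries the intertwining relation into the CP-flow condition, the compatibility $\widetilde{S}_tW=WS_t$ shows the embedding respects the shifts, and uniqueness of minimal dilations finishes the proof.
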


Given two CP-flows $\alpha$ and $\beta$ over $K$, we will say that $\alpha$ \emph{dominates} $\beta$
or that $\beta$ is a \emph{subordinate} of $\alpha$ if for all $t\geq 0$, the map $\alpha_t
-\beta_t$ is completely positive. We will often denote this relationship by $\alpha \geq \beta$.
Powers~\cite{powers-CPflows} has described a useful criterion for determining whether two CP-flows
have minimal dilations that are cocycle conjugate in terms of the next definition.

\begin{defn}\label{def-corners}
Let $\alpha$ and $\beta$ be CP-flows over $K_1$ and $K_2$, respectively. For $j=1,2$, let
$H_j=K_j \otimes L^2(0,\infty)$ and let $S_t^{(j)}$ denote the right shift on $H_j$. Let $\gamma=\{\gamma_t: t\geq 0\}$ be a family of maps from $B(H_2, H_1)$ into itself and define for each $t>0$, $\gamma_t^*: B(H_1, H_2) \to B(H_1, H_2)$ by $\gamma_t^*(C)=[\gamma_t(C^*)]^*$ for all $C\in B(H_1, H_2)$. We say that $\gamma$ is a \emph{flow corner} from
$\alpha$ to $\beta$ if the maps
$$
\Theta_t \begin{bmatrix}
A & B \\
C & D
\end{bmatrix} =
\begin{bmatrix}
\alpha_t(A) & \gamma_t(B) \\
\gamma_t^*(C) & \beta_t(D)
\end{bmatrix}
$$
define a CP-flow $\Theta=\{ \Theta_t : t\geq 0\}$ over $K_1 \oplus K_2$ with respect to the shift
$S_t^{(1)}\oplus S_t^{(2)}$. Note that $\gamma$ is a flow corner from $\alpha$ to $\beta$ if and only if $\gamma^*$ is a flow corner from $\beta$ to $\alpha$.

A flow corner $\gamma$ is called a \emph{hyper-maximal
flow corner} if every subordinate CP-flow $\Theta'$ of $\Theta$ of the form
$$
\Theta_t' \begin{bmatrix}
A & B \\
C & D
\end{bmatrix} =
\begin{bmatrix}
\alpha_t'(A) & \gamma_t(B) \\
\gamma_t^*(C) & \beta_t'(D)
\end{bmatrix}
$$
for $t\geq 0$ must satisfy $\alpha_t'=\alpha_t$ and $\beta_t'=\beta_t$ for all $t\geq 0$.

More generally, if $\alpha$ is a CP-flow over $K$ and $n$ is a positive integer, we say that
$\Theta$ is a \emph{positive $n\times n$ matrix of flow corners from $\alpha$ to $\alpha$} if
$\Theta=(\theta^{(ij)})$ is a CP-flow over $\oplus_{j=1}^n K$ such that $\theta^{(ii)}$ is a subordinate of $\alpha$ for all $i=1,\dots, n$.
\end{defn}

We also have a notion of $n \times n$ matrices of local flow cocycles (Definition~4.58 of
\cite{powers-CPflows}):

\begin{defn}
Suppose $\alpha$ is a CP-flow which is also an $E_0$-semigroup,
and let $n \in \mathbb{N}$.  We say $C$ is a positive $n \times n$
matrix of local flow $\alpha$-cocycles if the coefficients $C_{ij}$ of $C$ are local flow
$\alpha^d$
cocycles for $i,j=1, \ldots, n$ and the matrix $C(t)$ whose entries are $C_{ij}(t)$
is positive for all $t \geq 0$.
\end{defn}

The following is a combination of Theorems~4.56 and 4.59 in \cite{powers-CPflows}.
\begin{thm}\label{thm-corners}
Suppose $\alpha$ and $\beta$ are unital CP-flows over $K_1$ and $K_2$, respectively, and let
$\alpha^d$ and $\beta^d$ be corresponding minimal flow dilations. If there exists a hyper-maximal
flow corner from $\alpha$ to $\beta$, then $\alpha^d$ and $\beta^d$ are cocycle conjugate.
Conversely, if $\alpha^d$ and $\beta^d$ are cocycle conjugate and in addition $\alpha^d$ is of  type
II$_0$, then there exists a hyper-maximal flow corner from $\alpha$ to $\beta$.

Furthermore, let $(\alpha^d, W)$ be a minimal flow dilation over $H$, so  that
$$
\alpha_t(A) = W^*\alpha_t^d(WAW^*)W
$$
for $A \in B(K_1 \otimes L^2(0,\infty))$ and $t \geq 0$.  Suppose $n$ is a positive integer and
suppose that $\Theta=(\theta^{(ij)})$ is a positive $n\times n$ matrix of flow corners from $\alpha$ to $\alpha$. Then there exists a unique positive $n\times n$ matrix $C=(C_{ij})$ of contractive local flow $\alpha^d$-cocycles such that
\begin{equation}\label{tempcorners}
\theta_t^{(ij)}(A) = W^*C_{ij}(t)\alpha^d_t(WAW^*)W
\end{equation}
for all $A\in B(K_1 \otimes L^2(0,\infty))$. Conversely, if
$C=(C_{ij})$ is a positive $n\times n$ matrix of contractive local flow $\alpha^d$-cocycles,
then the matrix family $\Theta_t$ whose coefficients are given by \eqref{tempcorners} is a positive $n\times n$ matrix of flow corners from $\alpha$ to $\alpha$.
\end{thm}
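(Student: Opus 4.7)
The plan is to derive the entire statement from the minimal flow dilation of the assembled block flow. For the forward implication, start with the minimal flow dilation $\Theta^d$ of the $2 \times 2$ CP-flow $\Theta$ over $K_1 \oplus K_2$ built out of $\alpha$, $\beta$, and $\gamma$. Let $P_j$ denote the projections associated with the two summands of $K_1 \oplus K_2$, lifted to the dilation space via the dilating isometry. The diagonal compressions $P_j \Theta^d_t(P_j \cdot P_j) P_j$ yield CP-flow dilations of $\alpha$ and $\beta$, and Arveson's uniqueness of minimal dilations together with Powers' flow dilation addendum allow one to locate $\alpha^d$ and $\beta^d$ inside $\Theta^d$; the off-diagonal compressions $P_1 \Theta^d_t(\cdot) P_2$ then supply a contractive flow cocycle $V_t$ that intertwines $\alpha^d$ and $\beta^d$.

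The role of hyper-maximality is precisely to promote $V_t$ from contractive to unitary. If $V_t$ were to fall short of being isometric in either direction, one could subtract a nonzero completely positive piece from one of the diagonal blocks of $\Theta$ without disturbing $\gamma$, producing a proper subordinate of $\Theta$ that still has $\gamma$ as its off-diagonal corner, contradicting hyper-maximality. Once $V_t$ is known to be unitary, conjugation by $V_t$ implements the sought cocycle conjugacy of $\alpha^d$ and $\beta^d$.

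For the converse, given a unitary $\beta^d$-cocycle conjugating $\alpha^d$ to $\beta^d$, define $\gamma_t$ directly as the off-diagonal block of the ensuing $2\times 2$ E$_0$-semigroup on $B(H_1 \oplus H_2)$, then compress by the dilating isometries of $\alpha$ and $\beta$ to land back in $B(H_2, H_1)$. The main obstacle is the verification of hyper-maximality, and this is exactly where the type II$_0$ hypothesis on $\alpha^d$ becomes essential: every local flow cocycle for a type II$_0$ E$_0$-semigroup is scalar, so the combined rigidity of the conjugating cocycle and the absence of nontrivial local cocycles force any candidate subordinate of $\Theta$ that preserves $\gamma$ to agree with $\Theta$ on the diagonal blocks as well.

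For the $n\times n$ correspondence, apply the same dilation-and-compression procedure to the CP-flow $\Theta = (\theta^{(ij)})$ over $\bigoplus_{j=1}^n K_1$. Since each $\theta^{(ii)}$ is a subordinate of $\alpha$, a subordinate-dilation argument fits all $n$ diagonal minimal flow dilations inside a single copy of $\alpha^d$, and the off-diagonal compressions of $\Theta^d$ give operators $C_{ij}(t)$ that act as local flow $\alpha^d$-cocycles; positivity of the matrix $C(t)$ is inherited directly from complete positivity of $\Theta_t$, and the formula \eqref{tempcorners} is then read off by minimality. The opposite direction is a direct computation: given a positive matrix $C(t)$ of local flow $\alpha^d$-cocycles, complete positivity of $\alpha^d_t$ combined with $C(t) \geq 0$ yields complete positivity of the matrix map defined by \eqref{tempcorners}, while the CP-flow identities and shift intertwining pass through from the cocycle equation satisfied by the $C_{ij}$ together with the CP-flow property of $\alpha^d$.
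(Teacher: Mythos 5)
First, a point of reference: the paper does not prove this statement at all --- it is quoted as a combination of Theorems~4.56 and 4.59 of Powers~\cite{powers-CPflows} --- so there is no internal proof to measure your outline against, and it must stand on its own. As a free-standing argument it has a genuine error in the converse direction: the claim that ``every local flow cocycle for a type II$_0$ E$_0$-semigroup is scalar'' is false, and is contradicted by this very paper, whose Theorem~\ref{gauge-groups-final} computes $G_{flow}(\alpha^d) \simeq \R \times (U_\rho/\mathbb{T})$ for a family of type II$_0$ examples; these groups contain non-scalar local flow cocycles whenever $U_\rho/\mathbb{T}$ is nontrivial, and exhibiting exactly such cocycles is the point of Section~6. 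The type II$_0$ hypothesis plays a different role: it ensures (via the structure $G(\alpha^d)=\{e^{irt}C_t\}$ with $C$ a flow cocycle, i.e.\ the essential uniqueness of the unit) that a cocycle conjugacy between $\alpha^d$ and $\beta^d$ can be normalized to respect the flow structure, so that the resulting off-diagonal block is genuinely a flow corner. Hyper-maximality of that corner then follows not from any triviality of local cocycles but from the positivity of a $2\times 2$ operator matrix with unitary off-diagonal entries, which forces the diagonal entries to equal $I$ (compare Lemma~\ref{APP-2.16}).

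There is also a structural gap: both your forward direction (``if $V_t$ fell short of being isometric one could subtract a nonzero completely positive piece without disturbing $\gamma$'') and your converse tacitly invoke the bijection between subordinates of $\Theta$ with prescribed corner and positive matrices of contractive local cocycles --- but that bijection \emph{is} the content of the third paragraph of the theorem, which you only sketch at the end. The hard analytic step, namely that the off-diagonal compressions of $\Theta^d$ are well-defined local flow cocycles satisfying \eqref{tempcorners} and that this assignment is one-to-one and onto, is asserted rather than established, and it is precisely the part of Powers' Theorems 4.56 and 4.59 that carries the weight. Until that correspondence is proved, the subtraction argument for unitarity of $V_t$ and the rigidity argument for hyper-maximality are both unsupported.
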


Theorem~4.60 of \cite{powers-CPflows} tells us when a given flow corner from $\alpha$ to $\alpha$ corresponds to unitary local
$\alpha^d$-cocycle:
\begin{thm}
Suppose $\alpha$ is a unital CP-flow over K and let $\alpha^d$ be a minimal flow dilation.
Suppose $\theta$ is a flow corner from $\alpha$ to $\alpha$ and C is the local contractive flow cocycle
for $\alpha^d$ associated with $\theta$.
Then $C(t)$ is unitary for all $t\geq 0$ if and only if $\theta$
is hyper-maximal.
\end{thm}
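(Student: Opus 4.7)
The plan is to prove both implications by passing to the cocycle picture via Theorem~\ref{thm-corners}. The key starting point is that the constant cocycle $I$ corresponds to $\alpha$ itself under the formula $\alpha_t(A)=W^{*}\alpha^d_t(WAW^{*})W$, so the matrix of flow corners $\Theta$ (with off-diagonal $\theta,\theta^{*}$ and diagonal $\alpha,\alpha$) corresponds to the positive cocycle matrix with $I$ on the diagonal and $C,C^{*}$ off the diagonal. Subordinate matrices $\Theta'\leq\Theta$ of the prescribed form are thereby in bijection with positive cocycle matrices $\begin{pmatrix} C' & C \\ C^{*} & C'' \end{pmatrix}$ in which $C', C''$ are local contractive flow $\alpha^d$-cocycles, and under this bijection hyper-maximality of $\theta$ becomes the statement that the only such matrix has $C'=C''=I$.

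For the direction ``$C(t)$ unitary implies $\theta$ hyper-maximal,'' I would take any positive cocycle matrix of the above form and substitute $f=-C(t)g$ into the positivity inequality
$$
(C'(t)f,f)+(C''(t)g,g)+2\re(f,C(t)g)\geq 0
$$
to obtain $(C'(t)C(t)g,C(t)g)+(C''(t)g,g)\geq 2\|C(t)g\|^2=2\|g\|^2$, using that $C(t)$ is unitary. Since $C'(t), C''(t)$ are positive contractions, each summand on the left is at most $\|g\|^2$, so both must equal $\|g\|^2$ for every $g$. This forces $C'(t)=C''(t)=I$, so $\theta$ is hyper-maximal.

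For the reverse direction, assuming $\theta$ is hyper-maximal, I would exhibit the two manifestly positive cocycle matrices
$$
\begin{pmatrix} CC^{*} & C \\ C^{*} & I \end{pmatrix}=\begin{pmatrix} C \\ I \end{pmatrix}\begin{pmatrix} C^{*} & I \end{pmatrix}, \qquad \begin{pmatrix} I & C \\ C^{*} & C^{*}C \end{pmatrix}=\begin{pmatrix} I \\ C^{*} \end{pmatrix}\begin{pmatrix} I & C \end{pmatrix}.
$$
Each corresponds via Theorem~\ref{thm-corners} to a $2\times 2$ matrix of flow corners that is a subordinate of $\Theta$ (the differences of cocycle matrices are diagonal and positive), and hyper-maximality then forces $CC^{*}=I$ and $C^{*}C=I$, so $C(t)$ is unitary for all $t\geq 0$.

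The main obstacle is verifying that $CC^{*}$ and $C^{*}C$ are themselves local contractive flow $\alpha^d$-cocycles, as needed to apply Theorem~\ref{thm-corners}. Locality and contractivity are immediate: since $C(t)$ is local, so is $C(t)^{*}$, and both products are then local and of norm at most $1$. The cocycle identity $(CC^{*})(s+t)=(CC^{*})(s)\alpha^d_s((CC^{*})(t))$ follows by expanding via the cocycle property of $C$, the adjoint identity $C(s+t)^{*}=\alpha^d_s(C(t)^{*})C(s)^{*}$, and locality of $C(s)^{*}$; the argument for $C^{*}C$ is analogous. The delicate point is the flow property $(CC^{*})(t)S_t=S_t$, which reduces to showing $C(t)^{*}S_t=S_t$. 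I would obtain this from the following general fact: if $T$ is a contraction on $H$ which fixes every vector of a closed subspace $M$, then so does $T^{*}$, since for $f\in M$ one has $Tf=f$, hence $(T^{*}f,f)=(f,Tf)=\|f\|^2$, and
$$
\|T^{*}f-f\|^2=\|T^{*}f\|^2-2\re(T^{*}f,f)+\|f\|^2\leq 2\|f\|^2-2\|f\|^2=0.
$$
Applied to $T=C(t)$ and $M$ equal to the range of $S_t$ (on which $C(t)$ acts as the identity by the flow cocycle property), this yields $C(t)^{*}S_t=S_t$ and completes the verification.
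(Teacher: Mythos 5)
The paper does not actually prove this statement: it is imported verbatim as Theorem~4.60 of Powers' CP-flows paper \cite{powers-CPflows}, so there is no internal proof to compare yours against. Judged on its own, your argument is correct, and it reconstructs what is essentially Powers' original line: push everything through the corner--cocycle correspondence of Theorem~\ref{thm-corners}, observe that the cocycle matrix attached to $\Theta$ has identity diagonal, and play positivity of $2\times 2$ operator matrices against hyper-maximality. Your substitution $f=-C(t)g$ cleanly forces $C'(t)=C''(t)=I$ in the forward direction (equivalently, positivity of the cocycle matrix already gives $C'(t)\geq C(t)C(t)^*$ and $C''(t)\geq C(t)^*C(t)$, which with $C(t)$ unitary and $C'(t),C''(t)\leq I$ gives the same conclusion), and the rank-one factorizations in the reverse direction are exactly the right witnesses; your verification that $C(t)^*S_t=S_t$ via the ``contractions fixing a subspace'' lemma is also correct. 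Two points are used without justification and deserve a sentence each. First, you need the correspondence of Theorem~\ref{thm-corners} to be order-preserving, i.e.\ that a positive, purely diagonal difference of cocycle matrices really yields $\Theta\geq\Theta'$; for your specific matrices this follows by writing $I-C(t)C(t)^*=D^{1/2}D^{1/2}$ with $D^{1/2}$ local (the commutant of the range of $\alpha^d_t$ is a von Neumann algebra), so that the difference of the $(1,1)$ entries is $A\mapsto (D^{1/2}W)^*\alpha^d_t(WAW^*)(D^{1/2}W)$, manifestly completely positive. Second, for $CC^*$ and $C^*C$ to qualify as cocycles one needs strong continuity of $t\mapsto C(t)^*$, which is not automatic for adjoints of strongly continuous families; this is standard for contractive cocycles of E$_0$-semigroups but should be flagged. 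Neither point is a real obstruction, and the overall proof stands.
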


\subsection{Boundary weight maps} For the remainder of this section, let $K$ be a fixed separable Hilbert space
(not necessarily infinite-dimensional) and let $H=K \otimes L^2(0,\infty)$.

Define $\Lambda: B(K) \rightarrow B(H)$ by
\begin{equation*}
(\Lambda(A)f)(x)=e^{-x} Af(x)
\end{equation*}
and let $\mathfrak{A}(H)$ be the algebra
\begin{equation*}
\mathfrak{A}(H) = [I - \Lambda(I_K)]^\frac{1}{2} B(H)  [I -
\Lambda(I_K)]^\frac{1}{2}.
\end{equation*}
We will frequently denote by $\Lambda \in B(L^2(0,\infty))$ the operator $\Lambda(I_\cc)$.

\begin{defn}
We say that a linear functional $\mu:\mathfrak{A}(H) \to \C$
is a \textit{boundary weight}, denoted $\mu \in \mathfrak{A}(H)_*$, if the functional
$\ell$ defined on $B(H)$ by
\begin{equation*} \ell(A)= \mu\Big(
[I - \Lambda(I_K)]^\frac{1}{2} A [I - \Lambda(I_K)]^\frac{1}{2} \Big)
\end{equation*}
is a normal bounded linear functional.  The boundary weight $\mu$ is called \emph{bounded} if there exists $C>0$ such that $|\mu(T)| \leq C \|T\| $ for all $T \in \mathfrak{A}(H)$. Otherwise, $\mu$ is called \emph{unbounded}.

A linear from $B(K)_*$ to $\mathfrak{A}(H)_*$ will be called a \emph{boundary weight map}.
\end{defn}
Boundary weights were first defined in \cite{powers-CPflows} (Definition 4.16), where
their relationship to CP-flows was explored in depth.  For an additional discussion of boundary
weights and their properties, we refer
the reader to Definition 1.10 of \cite{markiewicz-powers} and its subsequent remarks.

Given a normal map $\phi: B(H) \to B(K)$, we will denote by $\hat{\phi}:B(K)_* \to B(H)_*$ the
predual map satisfying $\rho(\phi(A)) = (\hat{\phi}(\rho))(A)$ for all $A \in B(H)$ and $\rho \in B(K)_*$.

Define $\Gamma : B(H) \to B(H)$ by the weak* integral
\begin{equation}\label{gamma}
\Gamma(A) = \int_0^\infty e^{-t} S_t A S_t^* dt.
\end{equation}
We record in the following proposition facts which are implicit in the proof of Theorem~4.17 in Powers~\cite{powers-CPflows}, and we present a proof here for the convenience of the reader.

\begin{prop}\label{get-omega}
Let $\mu \in\fa(H)_*$ be a boundary weight. We have that for all $T\in\fa(H)$,
$$
\mu(T) = \lim_{x\to 0+} \mu(S_xS_x^*TS_xS_x^*).
$$
In particular $\mu=\mu'$ if and only if for all $x>0$ and $T \in S_xS_x^*B(H)S_xS_x^*$, we have that $\mu(T) = \mu'(T)$. Furthermore, given $x>0$ and $T \in S_xS_x^*B(H)S_xS_x^*$,
\begin{equation}\label{mut}
\mu(T) = \lim_{y\to x+} \frac{1}{y-x} \hatG(\mu) \Big(T - e^{x-y} S_{y-x} T S_{y-x}^*\Big).
\end{equation}
\end{prop}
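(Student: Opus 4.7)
The plan is to exploit the normality of the functional $\ell \in B(H)_*$ associated to $\mu$ by $\ell(A) = \mu\bigl((I-\Lambda)^{1/2} A (I-\Lambda)^{1/2}\bigr)$, combined with the observation that the projections $P_x := S_x S_x^*$ commute with $(I-\Lambda)^{1/2}$, since both are multiplication operators in the $L^2(0,\infty)$ coordinate.

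For the first assertion, any $T \in \fa(H)$ can be written as $T = (I-\Lambda)^{1/2} A (I-\Lambda)^{1/2}$ with $A \in B(H)$, and the commutation yields $P_x T P_x = (I-\Lambda)^{1/2} P_x A P_x (I-\Lambda)^{1/2}$, hence $\mu(P_x T P_x) = \ell(P_x A P_x)$. As $x \to 0+$, $P_x \to I$ strongly and the net $(P_x A P_x)$ is norm-bounded and converges to $A$ weakly, so normality of $\ell$ gives $\mu(P_x T P_x) \to \ell(A) = \mu(T)$. The \emph{in particular} statement is then immediate: if $\mu$ and $\mu'$ agree on $P_x B(H) P_x$ for every $x > 0$, applying the limit formula to each forces $\mu = \mu'$ on all of $\fa(H)$.

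For the integral formula, a change of variables $r = t + s$ in the definition of $\Gamma$ yields
$$
\Gamma(S_s T S_s^*) \,=\, \int_0^\infty e^{-t} S_{t+s} T S_{t+s}^*\, dt \,=\, e^{s}\int_s^\infty e^{-r} S_r T S_r^*\, dr.
$$
Linearity of $\Gamma$ and specialization to $s = y-x$, combined with $\Gamma(T) = \int_0^\infty e^{-r} S_r T S_r^*\, dr$, then produce
$$
\Gamma\bigl(T - e^{x-y} S_{y-x} T S_{y-x}^*\bigr) \,=\, \int_0^{y-x} e^{-r} S_r T S_r^*\, dr,
$$
an operator belonging to $P_x B(H) P_x \subseteq \fa(H)$. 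By the defining relation $\hatG(\mu)(A) = \mu(\Gamma(A))$, the right-hand side of the asserted identity equals $\frac{1}{y-x}\, \mu\bigl(\int_0^{y-x} e^{-r} S_r T S_r^*\, dr\bigr)$.

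To pass to the limit, set $U := (I-\Lambda)^{-1/2} P_x$, a bounded operator because the multiplier $(1-e^{-y})^{-1/2}\chi_{[x,\infty)}(y)$ is bounded on $(0,\infty)$. Since $T = P_x T P_x$, one verifies $S_r T S_r^* = (I-\Lambda)^{1/2} \bigl(U S_r T S_r^* U^*\bigr) (I-\Lambda)^{1/2}$ for every $r\geq 0$; in particular $\ell(UTU^*) = \mu(P_x T P_x) = \mu(T)$. Integrating this factorization, the quantity above becomes $\ell\bigl(B(y-x)/(y-x)\bigr)$ with $B(\delta) := \int_0^\delta e^{-r}\, U S_r T S_r^* U^*\, dr$. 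Strong continuity of $r \mapsto S_r T S_r^*$ at $r = 0$, inherited from that of the shift, gives $B(\delta)/\delta \to UTU^*$ weakly as $\delta \to 0+$; the net is norm-bounded, so normality of $\ell$ delivers $\ell(B(\delta)/\delta) \to \ell(UTU^*) = \mu(T)$, completing the proof. The main technical delicacy throughout is the handling of the unbounded operator $(I-\Lambda)^{-1/2}$: it admits no global bounded inverse, but the support hypothesis $T = P_x T P_x$ allows the cutoff $U = (I-\Lambda)^{-1/2} P_x$ to absorb the singularity and reduce the argument to a weak-operator limit for the normal functional $\ell$.
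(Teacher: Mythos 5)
Your proof is correct and follows essentially the same route as the paper's: both parts reduce to normality of the associated functional $\ell$ via the bounded cutoff $(I-\Lambda(I_K))^{-1/2}S_xS_x^*$ (your $U$ is the paper's $Q_x$), and the key computation $\Gamma\bigl(T - e^{x-y}S_{y-x}TS_{y-x}^*\bigr) = \int_0^{y-x} e^{-r}S_rTS_r^*\,dr$ is the same identity the paper derives in the form $\Gamma(e^{-x}S_xAS_x^* - e^{-y}S_yAS_y^*) = \int_x^y e^{-t}S_tAS_t^*\,dt$. The only differences are cosmetic: you work directly with $T$ rather than with $A = S_x^*TS_x$, and you inline the bounded-net convergence step that the paper isolates as a separate observation.
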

\begin{proof} Let $\mu \in\fa(H)_*$ be a boundary weight and let $\ell \in B(H)_*$ be the normal bounded linear functional such that for all $Z\in B(H)$,
$$
\mu((I-\Lambda(I_K))^{1/2}Z(I-\Lambda(I_K))^{1/2}) = \ell(Z).
$$
Given any $T \in \fa(H)$, let $Z \in B(H)$ be such that $T=(I-\Lambda(I_K))^{1/2}Z(I-\Lambda(I_K))^{1/2}$.
Now observe that
\begin{align*}
\mu(T) & = \ell(Z) = \lim_{x \to 0+} \ell(S_xS_x^*ZS_xS_x^*) =  \lim_{x \to 0+} \mu\Big((I-\Lambda(I_K))^{1/2}S_xS_x^*ZS_xS_x^*(I-\Lambda(I_K))^{1/2}\Big) \\
& = \lim_{x \to 0+} \mu\Big(S_xS_x^*(I-\Lambda(I_K))^{1/2}Z(I-\Lambda(I_K))^{1/2}S_xS_x^*\Big) \\
& = \lim_{x \to 0+} \mu(S_xS_x^*TS_xS_x^*).
\end{align*}
It follows immediately from this identity that two boundary weights $\mu, \mu'$ are identical if and only if for every $x>0$ they coincide on the algebra $S_xS_x^*B(H)S_xS_x^*$.

Let $A\in B(H)$ and $x>0$. Observe that if $(A_\lambda)$ is a bounded net of operators in $S_xB(H)S_x^*$ such that
$A_\lambda$ converges ultra-weakly to $S_xAS_x^*$,  then  $\lim_{\lambda} \mu(A_\lambda) =  \mu(S_xA S_x^*)$ (note that $S_x B(H) S_x^* \subseteq\fa(H)$). Indeed, $Q_x = (I-\Lambda(I_K))^{-1/2}S_xS_x^*$ is a bounded operator in $B(H)$ in the natural sense, hence the net $Q_xA_\lambda Q_x^*$ is bounded and also converges ultra-weakly to $Q_xS_xA S_x^*Q_x^*$. Therefore,
\begin{equation}\label{convnull}
\mu(S_xA S_x^*)= \ell(Q_xS_xA S_x^*Q_x^*)
= \lim_{\lambda} \ell(Q_xA_\lambda Q_x^*)
= \lim_{\lambda} \mu(A_\lambda).
\end{equation}

Let $x>0$ be fixed. A straightforward computation shows that for every $A \in B(H)$, $y>x$,
$$
\Gamma\Big( e^{-x} S_xAS_x^* - e^{-y} S_y A S_y^*\Big) = \int_x^y  e^{-t} S_tAS_t^* dt.
$$
The operator on the right obviously belongs to $S_xB(H)S_x^*$ for $y>x$. Furthermore, It is clear that
$$
A_y = \frac{1}{y-x} \int_x^y  e^{-t} S_tAS_t^* dt
$$
is a  bounded net of operators that converges ultra-weakly to $e^{-x} S_xAS_x^*$ as $y \to x$.  Thus for any boundary weight $\mu$, it follows from \eqref{convnull} that
\begin{align*}
\mu(e^{-x} S_xAS_x^*) & = \lim_{y \to x+} \mu \left( \frac{1}{y-x} \int_x^y  e^{-t} S_tAS_t^* dt  \right) \\
& = \lim_{y \to x+} \frac{1}{y-x} \widehat{\Gamma}(\mu) \left(   e^{-x} S_xAS_x^* - e^{-y} S_y A S_y^*  \right).
\end{align*}
Finally, we observe that for every $T \in S_xS_x^*B(H)S_xS_x^*$,  we have that $T=S_xAS_x^*$ for the operator $A=S_x^*TS_x$, hence we obtain equation \eqref{mut} by substitution.
\end{proof}

If $\alpha$ is a CP-flow over $K$, we define its resolvent by the weak* integral
\begin{equation} \label{resolvent-pure}
R_\alpha(A) = \int_0^\infty e^{-t} \alpha_t(A) dt
\end{equation}
defined for $A\in B(H)$.  Powers~\cite{powers-CPflows} proved that there exists a completely positive boundary weight map $\omega:  B(K)_* \to \mathfrak{A}(H)_*$ such that
\begin{equation} \label{resolvent}
\hat{R}_{\alpha}(\eta) = \hat{\Gamma} (\omega(\hat{\Lambda}\eta) + \eta)
\end{equation}
and $\omega(\rho)(I-\Lambda(I_K)) \leq \rho(I_K)$ for all $\rho \in B(K)_*$ positive. Such a boundary weight map is uniquely determined by \eqref{resolvent} in combination with Proposition~\ref{get-omega}, and in fact for all $\rho \in B(K)_*$, $x>0$ and $T \in S_xS_x^*B(H)S_xS_x^*$,
\begin{equation}\label{got-omega}
\omega(\rho)(T) = \lim_{y \to x+} \frac{1}{y-x}
(\widehat{R}_\alpha - \widehat{\Gamma})(\eta) ( T - e^{x-y} S_{y-x}TS_{y-x}^*),
\end{equation}
where $\eta \in B(H)_*$ is any normal functional such that $\rho=\widehat{\Lambda}(\eta)$. Such a functional exists since $\Lambda$ is isometric hence $\widehat{\Lambda}$ is onto.

The map $\omega$ is called \emph{the boundary weight map associated to $\alpha$}.

The following result, which is a compilation of Theorems 4.17, 4.23, and 4.27 of
\cite{powers-CPflows}, describes the converse relationship between boundary weight maps and
CP-flows.

\begin{thm}\label{powerstheorem}
Let $\omega: B(K)_* \to \mathfrak{A}(H)_*$ be a completely positive map satisfying
$\omega(\rho)(I-\Lambda(I_K)) \leq \rho(I_K)$ for all positive
$\rho$.  Let $\{S_t\}_{t \geq 0}$ be the right shift semigroup acting on $H$.
For each $t>0$, define the truncated boundary weight map
$\omega_t: B(K)_* \to B(H)_*$ by
\begin{equation*}
\omega_t(\rho)(A)= \omega(\rho)(S_tS_t^* AS_tS_t^*)
\end{equation*}
If for every $t>0$, the map $(I +\hat{\Lambda}\omega_t)$ is invertible  and furthermore the map
\begin{equation*}
\hat{\pi}_t : = \omega_t(I + \hat{\Lambda}\omega_t)^{-1}
\end{equation*}
is a completely positive contraction from $B(K)_*$ into $B(H)_*$, then $\omega$ is the boundary weight map associated to a CP-flow over $K$.
The CP-flow is unital if and only if $\omega(\rho)(I-\Lambda(I_K))
= \rho(I_K)$ for all $\rho \in B(K)_*$.
\end{thm}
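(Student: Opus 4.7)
The overall strategy is to construct a CP-flow $\alpha$ over $K$ by inverting the resolvent relation $\hat{R}_\alpha(\eta) = \hat{\Gamma}(\omega(\hat{\Lambda}\eta) + \eta)$ that a CP-flow must satisfy, and then to verify that the boundary weight map of the resulting $\alpha$ is indeed $\omega$. The hypotheses that $(I + \hat{\Lambda}\omega_t)$ is invertible and that $\hat{\pi}_t = \omega_t(I + \hat{\Lambda}\omega_t)^{-1}$ is a completely positive contraction are designed precisely to make this inversion possible while preserving positivity.

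The first technical obstacle is that $\omega$ takes values in $\mathfrak{A}(H)_*$, the space of boundary weights, which need not be bounded on $B(H)$; so $\hat{\Gamma}(\omega(\hat{\Lambda}\eta))$ is not directly defined as a composition of bounded maps. To get around this, I would work at each scale $t > 0$ with the truncated map $\omega_t$, which \emph{is} bounded, and with its resolvent $\hat{\pi}_t$. The algebraic identity $\omega_t = \hat{\pi}_t + \hat{\pi}_t \hat{\Lambda}\omega_t$ (obtained by rearranging $\hat{\pi}_t (I + \hat{\Lambda}\omega_t) = \omega_t$) then allows one to recover each $\omega_t$ from the bounded completely positive data $\hat{\pi}_t$, and by Proposition~\ref{get-omega} these truncations recover the full $\omega$ as $t \to 0^+$.

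Using this bounded data, I would build a candidate CP resolvent $\hat{R}: B(H)_* \to B(H)_*$ by a formula of the shape $\hat{R}(\eta) = \hat{\Gamma}(\eta) + \lim_{t\to 0+} \hat{\Gamma}\bigl(\omega_t(\hat{\Lambda}\eta)\bigr)$, where the limit is understood in the weak-$*$ sense on $B(H)$. The complete positivity and contractivity of each $\hat{\pi}_t$, combined with the structural positivity of $\hat{\Gamma}$ and of $\hat{\Lambda}$, force $\hat{R}$ to be a normal completely positive contraction satisfying the intertwining identities needed to be a resolvent. I would then apply a Hille--Yosida-type argument to exhibit $\hat{R}$ as the resolvent at $\lambda = 1$ of a strongly continuous CP contraction semigroup $\alpha$ on $B(H)$; at other resolvent parameters the same analysis applies with $\Gamma$ replaced by $\Gamma_\lambda(A) = \int_0^\infty e^{-\lambda t} S_t A S_t^* \, dt$ and $\omega$ handled through the corresponding shifted truncations.

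Finally, I would verify the shift intertwining $\alpha_t(A) S_t = S_t A$, which should follow from the analogous intertwining satisfied by $\Gamma$ together with the fact that boundary weights $\omega(\rho) \in \mathfrak{A}(H)_*$ are compatible with the projections $S_t S_t^*$ in the sense of Proposition~\ref{get-omega}, and verify via \eqref{got-omega} that the boundary weight map associated to $\alpha$ coincides with the given $\omega$; the latter follows essentially from the construction together with uniqueness of boundary weight maps. The unital case reduces to the identity $\omega(\rho)(I - \Lambda(I_K)) = \rho(I_K)$ by a direct computation showing $\hat{R}_\alpha$ is unital if and only if $\alpha_t$ is unital for every $t$. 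The main difficulty is the third step above: rigorously assembling the truncated objects $\hat{\pi}_t$ into a single bounded resolvent $\hat{R}$ and verifying the semigroup property, in a setting where the generator need not be bounded on $B(H)$ and where extending $\hat{\Gamma}$ to unbounded boundary weights requires careful control of the limits as $t \to 0^+$.
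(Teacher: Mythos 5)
The paper does not actually prove this theorem: it is quoted as a compilation of Theorems 4.17, 4.23 and 4.27 of Powers~\cite{powers-CPflows}, so there is no in-paper argument to compare yours against, and what you are attempting is a reconstruction of the central technical result of that (long) paper. Judged on its own terms, your proposal is a plan rather than a proof, and the step you yourself flag as ``the main difficulty'' is exactly where all of the content lives; as written it has a genuine gap. Knowing that the single operator $\hat{R}(\eta)=\hat{\Gamma}(\omega(\hat{\Lambda}\eta)+\eta)$ is a normal completely positive contraction is far from sufficient for ``a Hille--Yosida-type argument'': to generate a weak-$*$ continuous semigroup of normal completely positive contractions you need a full pseudo-resolvent family $\{\hat{R}_\lambda\}_{\lambda>0}$ satisfying the resolvent identity, with $(\lambda\hat{R}_\lambda)^n$ completely positive and contractive for \emph{every} $n$ and every $\lambda$, together with $\lambda\hat{R}_\lambda\to \id$ in an appropriate sense. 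None of this is extracted in your plan from the hypothesis that $\hat{\pi}_t$ is a completely positive contraction for all $t>0$, and that extraction is precisely the hard part: in Powers' actual argument the hypothesis is consumed not through the resolvent at all, but through a limiting construction that assembles $\alpha_t$ directly from the generalized boundary representation $\pi_s$ (for small $s$) and the shift, after which the resolvent formula \eqref{resolvent} is \emph{derived}. Relatedly, the intertwining $\alpha_t(A)S_t=S_tA$ is a statement about the semigroup itself, not about its Laplace transform, so it cannot simply be ``verified'' at the end from properties of $\hat{R}$; it has to be built into whatever construction produces $\alpha_t$.

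A smaller remark: the ``first technical obstacle'' you identify is not really one. Since $0\leq\Gamma(A)\leq\|A\|\,(I-\Lambda(I_K))$ for positive $A$, the map $\Gamma$ carries $B(H)$ into $\fa(H)$ and $\hat{\Gamma}$ carries boundary weights to normal functionals, so $\hat{\Gamma}(\omega(\hat{\Lambda}\eta))$ is already well defined without truncation --- this is why the paper can write \eqref{resolvent} as it stands. Your limit $\lim_{t\to0+}\hat{\Gamma}(\omega_t(\hat{\Lambda}\eta))$ exists but merely reproduces this quantity; the effort spent there would be better directed at the semigroup construction itself.
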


We note that it follows immediately from Proposition~\ref{get-omega} that if $\omega, \omega'$ are two boundary weight maps from $B(K)_*$ to $\fa(H)_*$, then $\omega=\omega'$ if and only if  $\omega_t=\omega_t'$ for all $t>0$.

\begin{defn}
Let $\omega: B(K)_* \to \mathfrak{A}(H)_*$  be a completely positive boundary weight map
satisfying
$\omega(\rho)(I-\Lambda(I_K)) \leq \rho(I_K)$ for all positive
$\rho$. If for every $t>0$ the map $\hat{\pi}_t$ as defined in the statement of Theorem~\ref{powerstheorem} exists and it is a completely positive contraction, then $\omega$ is called a \emph{$q$-positive} boundary weight map. In that case, the
family $\pi_t$ (for $t>0$) of completely positive normal contractions from $B(H)$ to $B(K)$ is called the generalized boundary representation associated to $\omega$, or alternatively to the CP-flow
associated to $\omega$.
\end{defn}

In the next result proven by
Powers~\cite{powers-CPflows} we recall the criterion for subordination in terms of the generalized
boundary representation.

\begin{thm}\label{boundary-representation-subordinates}
Let $\alpha$ and $\alpha'$ be CP-flows acting on $B(H)$ with generalized boundary representations
$\pi_t$ and $\pi'_t$, respectively. Then $\alpha \geq \alpha'$ if and only if $\pi_t-\pi'_t$ is
completely positive for all $t>0$. In particular, if $\pi_t = \pi_t'$ for all $t>0$, then $\alpha=\alpha'$.
\end{thm}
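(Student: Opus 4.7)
The plan is to use the one-to-one correspondence between CP-flows over $K$, their boundary weight maps $\omega$, and their generalized boundary representations $\pi_t$, mediated by the M\"obius-type formula $\hat{\pi}_t = \omega_t(I + \hat{\Lambda}\omega_t)^{-1}$. The key algebraic preliminary is to verify the pair of dual identities $(I - \hat{\pi}_t\hat{\Lambda})^{-1} = I + \omega_t\hat{\Lambda}$ and $(I + \hat{\Lambda}\omega_t)^{-1} = I - \hat{\Lambda}\hat{\pi}_t$ by direct algebraic manipulation. Elementary expansion of the difference $A^{-1}B - C^{-1}D$, applied to the invertibles appearing in the M\"obius formula, then yields the central factorization
\begin{equation*}
\omega_t - \omega'_t \;=\; (I + \omega_t\hat{\Lambda})\,(\hat{\pi}_t - \hat{\pi}'_t)\,(I + \hat{\Lambda}\omega'_t).
\end{equation*}

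For the $(\Leftarrow)$ direction, suppose $\pi_t - \pi'_t$ is completely positive for every $t > 0$. The outer factors in the identity above are sums of CP maps, so each $\omega_t - \omega'_t$ is completely positive, and by Proposition~\ref{get-omega} so is $\omega - \omega'$ as a boundary weight map; the normalization $(\omega - \omega')(\rho)(I - \Lambda(I_K)) \leq \rho(I_K)$ follows from the corresponding bound for $\omega$ and the positivity of $\omega'$. Applying Theorem~\ref{powerstheorem} to $\omega - \omega'$ produces a CP-flow $\beta$ whose boundary weight map is $\omega - \omega'$, and matching it against $\alpha$ and $\alpha'$ via the resolvent formula \eqref{resolvent} and the uniqueness of the CP-flow associated to a boundary weight map gives $\alpha_t = \alpha'_t + \beta_t$ for every $t \geq 0$, so $\alpha \geq \alpha'$.

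For the $(\Rightarrow)$ direction, if $\alpha \geq \alpha'$ then $R_\alpha - R_{\alpha'} = \int_0^\infty e^{-t}(\alpha_t - \alpha'_t)\,dt$ is CP, so via \eqref{resolvent} the composition $\hat{\Gamma}(\omega - \omega')\hat{\Lambda}$ is positivity-preserving; combined with the extraction formula \eqref{got-omega} and the surjectivity of $\hat{\Lambda}$, this forces $\omega - \omega'$ to be a CP boundary weight map. Applying Theorem~\ref{powerstheorem} again produces a CP-flow whose generalized boundary representation is, by construction, $\pi_t - \pi'_t$, which is therefore CP. The main obstacle lies precisely in this converse direction: the dual factorization
\begin{equation*}
\hat{\pi}_t - \hat{\pi}'_t \;=\; (I - \hat{\pi}_t\hat{\Lambda})\,(\omega_t - \omega'_t)\,(I - \hat{\Lambda}\hat{\pi}'_t)
\end{equation*}
cannot be used directly because its outer factors are differences rather than sums of CP maps, so the detour through Theorem~\ref{powerstheorem} is essential and requires careful verification that $(I + \hat{\Lambda}(\omega - \omega')_t)$ is invertible and that the resulting M\"obius transform really equals $\pi_t - \pi'_t$. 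Finally, the ``in particular'' assertion is automatic: if $\pi_t = \pi'_t$ for every $t > 0$ then both $\pi_t - \pi'_t$ and $\pi'_t - \pi_t$ are trivially CP, so the main equivalence yields $\alpha \geq \alpha'$ and $\alpha' \geq \alpha$, hence $\alpha = \alpha'$.
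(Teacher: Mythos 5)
First, a remark on the comparison itself: the paper does not prove this statement at all --- it is quoted as a known result of Powers \cite{powers-CPflows}, so there is no internal proof to measure your argument against. Your factorization
$\omega_t - \omega'_t = (I + \omega_t\widehat{\Lambda})(\widehat{\pi}_t - \widehat{\pi}'_t)(I + \widehat{\Lambda}\omega'_t)$
is correct, as are the two resolvent-type identities, and it does show that complete positivity of $\pi_t-\pi'_t$ forces complete positivity of $\omega_t-\omega'_t$ (and hence the ``in particular'' clause, since $\pi_t=\pi'_t$ gives $\omega=\omega'$ and then $\alpha=\alpha'$ by uniqueness). But from that point on both directions reduce the problem to the assertion ``$\omega-\omega'$ completely positive $\Leftrightarrow$ $\alpha\geq\alpha'$'', and that assertion is false in one direction, so neither half of your proof closes.

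Concretely: in the $(\Leftarrow)$ direction you apply Theorem~\ref{powerstheorem} to $\omega-\omega'$, but that theorem requires more than complete positivity and the normalization --- one must verify that $I+\widehat{\Lambda}(\omega-\omega')_t$ is invertible and that $(\omega-\omega')_t\bigl(I+\widehat{\Lambda}(\omega-\omega')_t\bigr)^{-1}$ is a completely positive contraction. This cannot follow from ``$\omega-\omega'$ is CP'' alone: the finite-dimensional analogue is precisely the phenomenon recorded after Definition~\ref{qposdef}, where $\phi-\lambda\phi=(1-\lambda)\phi$ is completely positive and both $\phi$ and $\lambda\phi$ are $q$-positive, yet $\phi\not\geq_q\lambda\phi$. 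If your reduction sufficed, it would prove that CP-ordering of boundary weight maps implies subordination, which would make the $\pi$-level criterion equivalent to the strictly weaker $\omega$-level one --- so the essential content of the hypothesis is discarded at the first step and cannot be recovered. (There is also a smaller slip: even granting a CP-flow $\beta$ with boundary weight map $\omega-\omega'$, equation \eqref{resolvent} gives $\widehat{R}_\beta=\widehat{R}_\alpha-\widehat{R}_{\alpha'}+\widehat{\Gamma}$, hence $\alpha_t-\alpha'_t=\beta_t-S_t(\cdot)S_t^*$ rather than $\beta_t$; this is repairable because $\beta_t(A)-S_tAS_t^*=(I-S_tS_t^*)\beta_t(A)(I-S_tS_t^*)$ is CP for any CP-flow, but that step is absent.) In the $(\Rightarrow)$ direction the final claim is a non sequitur: the CP-flow built from the boundary weight map $\omega-\omega'$ has generalized boundary representation $(\omega-\omega')_t\bigl(I+\widehat{\Lambda}(\omega-\omega')_t\bigr)^{-1}$, and since the M\"obius transform is not additive this is not $\widehat{\pi}_t-\widehat{\pi}'_t$. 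The dual factorization you rightly flag as unusable is in fact the crux of the theorem; Powers's argument has to work directly with subordinates of the flow and their boundary representations, not with the difference $\omega-\omega'$.
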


\subsection{Powers weights and boundary weight doubles}\label{powers-weights-section}
A boundary weight map
$\omega: B(\cc)_* \to \mathfrak{A}(L^2(0,\infty))$ is determined by its value $\omega_1:=\omega(1)$, and it  induces a CP-flow $\alpha$ over $\cc$ if and only  $\omega_1$ is a positive boundary weight and $\omega_1(I-\Lambda)\leq1$. In that case,
the CP-flow $\alpha$ is unital if and only if $\omega_1(I-\Lambda)=1$, and
therefore dilates to an $E_0$-semigroup $\alpha^d$.

\emph{Since all the key properties of $\omega$ are determined by the single boundary weight $\omega_1$ in the special case $K=\cc$, we will write $\omega$ instead of $\omega_1$.}

 Results from
\cite{powers-CPflows} show that $\alpha^d$ is of type I if $\omega_1$
is bounded and of type II$_0$ if $\omega_1$ is unbounded.  Thus we are led to the following definition.

\begin{defn} \label{powers-weight} A boundary weight $\nu \in \mathfrak{A}(L^2(0, \infty))_*$ is called
a \emph{Powers
weight} if $\nu$ is positive and $\nu(I-\Lambda)=1$. We say that a Powers weight $\nu$ is
\emph{type I}
if it is bounded and \emph{type II} if it is unbounded.
\end{defn}

If $\nu$ is a Powers weight, then it has the form:
\begin{equation*}
\nu
\Big((I - \Lambda)^\frac{1}{2}A
(I - \Lambda)^\frac{1}{2}\Big) = \sum_{i=1}^k (f_i, Af_i) \end{equation*} for some mutually
orthogonal nonzero $L^2$-functions $\{f_i\}_{i=1}^k$ ($k \in \mathbb{N} \cup
\{\infty\}$) with $\sum_{i=1}^k ||f_i||^2=1$.

We note that if $\nu$ is a
type II Powers weight, then for the weights $\nu_t$ defined by
$\nu_t(A)=\nu(S_tS_t^*AS_tS_t^*)$ for $A \in B(L^2(0, \infty))$
and $t>0$, both $\nu_t(I)$ and $\nu_t(\Lambda)$
approach infinity as $t \rightarrow 0+$.

Powers~\cite{powers-holyoke} has described a useful criterion to determine when Powers weights induce cocycle conjugate E$_0$-semigroups.

\begin{defn}
Let $\nu, \eta \in \mathfrak{A}(L^2(0, \infty))_*$ be positive boundary weights.  We say that $\nu$ \emph{$q$-dominates} $\eta$
(or that $\eta$ is \emph{$q$-subordinate} to $\nu$), and write
$\nu \geq_q \eta$, if
$$
\dfrac{\nu_t}{1+\nu_t(\Lambda)} - \dfrac{\eta_t}{1+\eta_t(\Lambda)}
$$
is a positive element of $B(L^2(0, \infty))_*$ for every $t>0$.

Suppose that $\nu$ and $\eta$ are Powers weights.
We say that $\gamma \in \mathfrak{A}(L^2(0,\infty))_*$ is a \emph{corner} from $\nu$ to $\eta$ if the map from $M_2(\mathfrak{A}(L^2(0,\infty)))$ to $M_2(\cc)$ given by
$$
(A_{ij}) \mapsto \begin{pmatrix}
\nu(A_{11}) & \gamma(A_{12}) \\
\gamma^*(A_{21}) & \eta(A_{22})
\end{pmatrix}
$$
is completely positive. We say that $\gamma$ is a \emph{$q$-corner} from $\nu$ to $\eta$ if for every $t>0$ the map from $M_2(\mathfrak{A}(L^2(0,\infty)))$ to $M_2(\cc)$ given by
$$
(A_{ij}) \mapsto \begin{pmatrix}
\dfrac{\nu_t(A_{11})}{1+\nu_t(\Lambda)} & \dfrac{\gamma_t(A_{12})}{1+\gamma_t(\Lambda)} \\ \noalign{\medskip}
\dfrac{\gamma_t^*(A_{21})}{1+\gamma^*_t(\Lambda)} & \dfrac{\eta_t(A_{22})}{1+\eta_t(\Lambda)}
\end{pmatrix}
$$
is completely positive.

A $q$-corner $\gamma$ is a \emph{hyper-maximal $q$-corner} from $\nu$ to $\eta$ if, whenever
$\nu'$ and $\eta'$ are $q$-subordinates of $\nu$ and $\eta$ such that the map
$$
(A_{ij}) \mapsto \begin{pmatrix}
\dfrac{\nu'_t(A_{11})}{1+\nu'_t(\Lambda)} & \dfrac{\gamma_t(A_{12})}{1+\gamma_t(\Lambda)} \\ \noalign{\medskip}
\dfrac{\gamma_t^*(A_{21})}{1+\gamma^*_t(\Lambda)} & \dfrac{\eta'_t(A_{22})}{1+\eta'_t(\Lambda)}

\end{pmatrix}
$$
is completely positive for each $t>0$, we have $\eta=\eta'$ and $\nu=\nu'$.
\end{defn}

If $\nu$ and $\eta$ are type II Powers weights which induce CP-flows $\alpha$ and
$\beta$, respectively, then there is a bijective correspondence between hyper-maximal
$q$-corners from $\nu$ to $\eta$ and hyper-maximal flow corners from $\alpha$ to $\beta$
(see the discussion preceding Theorem 1.30 of \cite{alevras-powers-price}), whereby
Theorem \ref{thm-corners} implies the following.

\begin{thm}
Let $\nu$ and $\eta$ be type II Powers weights with corresponding CP-flows $\alpha$ and $\beta$,
respectively.  Then $\alpha^d$ and $\beta^d$ are cocycle conjugate if and only if there is a hyper-maximal
$q$-corner from $\nu$ to $\eta$.
\end{thm}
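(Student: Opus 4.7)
The proof plan is to combine Theorem~\ref{thm-corners} with an explicit bijection between $q$-corners of Powers weights and flow corners of their associated CP-flows. This bijection is the $K=\cc^2$ case of the correspondence between boundary weight maps and CP-flows supplied by Theorem~\ref{powerstheorem}.

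Given a boundary weight $\gamma \in \fa(L^2(0,\infty))_*$, the first step is to assemble the matrix boundary weight map $\Omega: B(\cc^2)_* \to \fa(\cc^2 \otimes L^2(0,\infty))_*$ whose evaluation on a functional $(\rho_{ij}) \in M_2(\cc)$ and an operator matrix $(A_{ij})$ is
\[
\Omega((\rho_{ij}))((A_{ij})) = \rho_{11}\nu(A_{11}) + \rho_{12}\gamma(A_{12}) + \rho_{21}\gamma^*(A_{21}) + \rho_{22}\eta(A_{22}).
\]
Theorem~\ref{powerstheorem} says $\Omega$ induces a CP-flow $\Theta$ over $\cc^2$ precisely when for each $t>0$ the map $\hat{\pi}_t^\Omega = \Omega_t(I+\hatL\,\Omega_t)^{-1}$ is a completely positive contraction. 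Since $\Lambda(I_{\cc^2})=\Lambda\oplus\Lambda$, the operator $\hatL\Omega_t$ acts on each matrix unit of $B(\cc^2)_*$ by evaluation of the corresponding truncated boundary weight against $\Lambda$, so the inverse $(I+\hatL\Omega_t)^{-1}$ reduces, entry-by-entry, to scalar division by the factors $1+\nu_t(\Lambda)$, $1+\gamma_t(\Lambda)$, $1+\gamma_t^*(\Lambda)$, $1+\eta_t(\Lambda)$ in the respective positions. Complete positivity of $\hat{\pi}_t^\Omega$ thus unpacks into exactly the $q$-corner positivity condition of the definition. By the block structure and uniqueness of the CP-flow associated to a boundary weight map, the diagonal entries of $\Theta$ coincide with $\alpha$ and $\beta$, so $\Theta$ is automatically a flow corner in the sense of Definition~\ref{def-corners}.

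The next step is to verify that the bijection respects hyper-maximality. By Theorem~\ref{boundary-representation-subordinates}, a subordinate CP-flow $\Theta'$ of $\Theta$ whose off-diagonal coincides with that of $\Theta$ corresponds to boundary weight data with the same $\gamma$ whose $(1,1)$ and $(2,2)$ diagonal entries are $q$-subordinates $\nu'$ and $\eta'$ of $\nu$ and $\eta$. Consequently the hyper-maximality condition on $\Theta$ from Definition~\ref{def-corners} translates precisely into the hyper-maximality condition on $\gamma$ as a $q$-corner.

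With the bijection in hand the theorem follows directly. If $\gamma$ is a hyper-maximal $q$-corner from $\nu$ to $\eta$, the corresponding hyper-maximal flow corner together with Theorem~\ref{thm-corners} gives that $\alpha^d$ and $\beta^d$ are cocycle conjugate. Conversely, because $\nu$ is type II the dilation $\alpha^d$ is of type II$_0$, so cocycle conjugacy of $\alpha^d$ and $\beta^d$ yields, via Theorem~\ref{thm-corners}, a hyper-maximal flow corner from $\alpha$ to $\beta$, and the bijection returns a hyper-maximal $q$-corner from $\nu$ to $\eta$. The one non-formal step, and the natural place for a slip, is the entry-wise inversion identifying $\hat{\pi}_t^\Omega$ with the normalized matrix appearing in the $q$-corner definition; once the denominators are correctly matched the remainder of the argument is bookkeeping.
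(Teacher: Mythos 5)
Your proposal is correct and follows the same route as the paper: the paper obtains this theorem by citing the bijective correspondence between hyper-maximal $q$-corners from $\nu$ to $\eta$ and hyper-maximal flow corners from $\alpha$ to $\beta$ (attributed to the discussion preceding Theorem 1.30 of Alevras--Powers--Price) and then invoking Theorem~\ref{thm-corners}. You reconstruct that correspondence explicitly from the boundary weight machinery instead of citing it, but the logical skeleton --- the corner bijection combined with the flow-corner criterion for cocycle conjugacy of type II$_0$ dilations --- is identical.
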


The following theorem describes the set of $q$-corners from a type II Powers weight $\nu$ to itself
(see Definition 2.12 (b) of \cite{alevras-powers-price} and its subsequent discussion):
\begin{thm}
Let $\nu$ be a type II Powers weight, and let $T$ be the trace density
operator associated to $\nu$ in the sense that
$$\nu\Big((I - \Lambda)^\frac{1}{2} A (I - \Lambda)^\frac{1}{2}\Big) = tr(A T)$$
for all $A \in B(L^2(0,\infty))$.
Let $\mathfrak{M}$ be the closure of the range of $T$. For every contraction
$X \in B(\mathfrak{M})$, let $\kappa(X) \in [0,\infty]$ be given by
$$
\kappa(X)=\sup\{\re(\tr(\Lambda(I-\Lambda)^{-1} S_tS_t^* T^\frac{1}{2}(I-X)
T^\frac{1}{2})): t >0\}
$$
Then for every $X \in B(\mathfrak{M})$ such that $\kappa(X) < \infty$
and $x\in\cc$ such that $\re(x) \geq \kappa(X)$, the map
$$
\gamma_{(x,X)}\Big((I - \Lambda)^\frac{1}{2} A (I - \Lambda)^\frac{1}{2}\Big) = \frac{1}{1+x}
\tr(A T^\frac{1}{2} X T^\frac{1}{2})
$$
constitutes a $q$-corner from $\nu$ to $\nu$. Conversely, if $\gamma$ is a $q$-corner from $\nu$ to $\nu$, there exists a unique pair $(x,X)$ such that $X \in B(\mathfrak{M})$ satisfies $\kappa(X) <\infty$ and $x\in\cc$ satisfies $\re(x)\geq\kappa(X)$ such that $\gamma = \gamma_{(x,X)}$.

Furthermore, a $q$-corner $\gamma_{(x,X)}$ is hyper-maximal if and only if $\re(x)=\kappa(X)$ and $X$ is
unitary.
\end{thm}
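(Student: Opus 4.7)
The plan is to separate the argument into three tasks: verifying that each $\gamma_{(x,X)}$ with admissible parameters is a $q$-corner, extracting $(x,X)$ from an arbitrary $q$-corner, and characterizing hyper-maximality. Throughout, the trace density representation of $\nu$ is the bridge between operator-theoretic inequalities on $B(\mathfrak{M})$ and the CP conditions appearing in the definition of a $q$-corner.

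For the forward direction, given a contraction $X \in B(\mathfrak{M})$ with $\kappa(X) < \infty$ and $x$ with $\re(x) \geq \kappa(X)$, I would first check that $\gamma_{(x,X)}$ is a well-defined boundary weight and compute explicitly the truncations $\gamma_{(x,X),t}(\Lambda) = \frac{1}{1+x}\tr\bigl(\Lambda(I-\Lambda)^{-1/2} S_tS_t^* T^{1/2} X T^{1/2} (I-\Lambda)^{-1/2}\bigr)$. Then the $2\times 2$ CP condition at each $t>0$, expressed via the trace density representation, translates into positivity of a $2\times 2$ block operator on $\mathfrak{M} \oplus \mathfrak{M}$. After clearing the normalization factors $(1+\nu_t(\Lambda))$ and $|1+\gamma_t(\Lambda)|$, this positivity is equivalent to the scalar inequality $\re(x) \geq \re\bigl(\tr(\Lambda(I-\Lambda)^{-1} S_tS_t^* T^{1/2}(I-X) T^{1/2})\bigr)$, and taking the supremum over $t>0$ yields exactly the hypothesis $\re(x) \geq \kappa(X)$.

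For the converse and uniqueness, given an arbitrary $q$-corner $\gamma$, I would exploit the Cauchy--Schwarz inequality implicit in the $2\times 2$ CP condition together with a Radon--Nikodym argument for normal functionals on $B(L^2(0,\infty))$ to produce a contraction $X$ on $\mathfrak{M}$ and scalar $x$ such that $\gamma$ has trace density $\frac{1}{1+x} T^{1/2} X T^{1/2}$. Uniqueness is immediate: $T^{1/2}$ has range dense in $\mathfrak{M}$, so $X$ is pinned down by the values of $\gamma_t$ on operators of the form $S_tS_t^* A S_tS_t^*$, and then $x$ is determined by matching the normalization through $\gamma(\Lambda)$.

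For hyper-maximality, the "only if" direction proceeds by contradiction: if $\re(x) > \kappa(X)$, one can decrease $x$ slightly while still satisfying $\re(x') \geq \kappa(X)$, and the resulting $\gamma_{(x',X)}$ witnesses $\gamma_{(x,X)}$ as a $q$-corner between strict $q$-subordinates of $\nu$, breaking hyper-maximality; similarly a non-unitary contraction $X$ admits nontrivial subordinates via its defect operators $I - X^*X$ or $I-XX^*$. For the "if" direction, assume $X$ unitary and $\re(x)=\kappa(X)$: any putative $q$-subordinates $\nu', \eta'$ making $\gamma_{(x,X)}$ a $q$-corner correspond via the same parametrization to contractions $P, Q \leq I$ on $\mathfrak{M}$, and the extremal constraint $\re(x)=\kappa(X)$ combined with unitarity of $X$ forces $P = Q = I$. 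The main technical obstacle is the bookkeeping in the forward direction: controlling the normalization factors as $t \to 0^+$ where $\nu_t(\Lambda) \to \infty$, and verifying that the reduction isolates exactly the functional $\kappa(X)$ rather than some weaker or stronger invariant.
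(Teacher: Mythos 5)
First, a point of reference: the paper does not prove this theorem at all --- it is imported verbatim from Alevras--Powers--Price (Definition 2.12(b) of \cite{alevras-powers-price} and the discussion following it), so there is no in-paper argument to measure your outline against. Judged on its own terms, your outline has the right architecture (everything through trace densities; reduce the $2\times 2$ CP condition at each $t$ to positivity of a $2\times 2$ operator matrix on $\mathfrak{M}\oplus\mathfrak{M}$), but the central step is asserted exactly where the real content lies, and as literally stated it is not correct. Write $N_t=\nu_t(\Lambda)$ and $c_t=\tr(\Lambda(I-\Lambda)^{-1}S_tS_t^*T^{1/2}(I-X)T^{1/2})$; then $1+\gamma_t(\Lambda)=(1+x+N_t-c_t)/(1+x)$, and positivity of the relevant operator matrix with equal diagonal blocks $R_t^*R_t/(1+N_t)$ and off-diagonal block $R_t^*XR_t/(1+x+N_t-c_t)$, where $R_t=T^{1/2}(I-\Lambda)^{-1/2}S_tS_t^*$, is equivalent \emph{for fixed $t$} to the modulus inequality $|1+x+N_t-c_t|\geq(1+N_t)\,\|V_t^*XV_t\|$ (with $V_t$ the polar isometry of $R_t$) --- not to $\re(x)\geq\re(c_t)$. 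That the conjunction of these conditions over all $t>0$ collapses to the single condition $\re(x)\geq\kappa(X)=\sup_t\re(c_t)$ is precisely where the type II hypothesis enters (one needs $N_t\to\infty$ as $t\to 0^+$ together with a monotonicity/limiting argument); you flag this as ``bookkeeping,'' but it is the heart of the theorem and is not supplied.

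Second, your ``only if'' argument for hyper-maximality uses the wrong mechanism: decreasing $x$ to $x'$ produces a \emph{different} corner $\gamma_{(x',X)}$, whereas hyper-maximality is a statement about a \emph{fixed} off-diagonal entry admitting only the trivial completion of the diagonal. What is actually needed is to use the slack $\re(x)-\kappa(X)>0$ (respectively the nonzero defect $I-X^*X$ when $X$ is not unitary) to exhibit a $q$-subordinate $\nu'$ of $\nu$ with $\nu'\neq\nu$ for which the matrix with the \emph{same} corner $\gamma_{(x,X)}$ remains completely positive for every $t$. Both this and your ``if'' direction silently rely on a classification of the $q$-subordinates of $\nu$ (your contractions $P,Q\leq I$), which is itself a nontrivial result of the same type as the corner classification and is nowhere established in your outline. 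The converse/uniqueness paragraph is plausible in outline (Cauchy--Schwarz plus Radon--Nikodym relative to $T$), but you would still need to verify consistency of the densities extracted from the truncations $\gamma_t$ across different $t$ and to show that the resulting $X$ satisfies $\kappa(X)<\infty$.
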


\begin{remark} \label{hypnote}
Setting $X=I_{\mathfrak{M}}$, we observe that $\kappa(I_\mathfrak{M})=0$, so
if $\re(x) \geq 0$, then the pair $(x,I_\mathfrak{M})$ satisfies the conditions of the theorem.
In other words, if $\re(x) \geq 0$, then $\frac{1}{1+x} \nu$ is a $q$-corner from $\nu$ to $\nu$,
and it is hyper-maximal if and only if $\re(x)=0$.
\end{remark}

We will be interested in combining Powers weights with the  completely positive maps on matrices of the following type to obtain $E_0$-semigroups.
\begin{defn}\label{qposdef} Let $K$ be a separable Hilbert space.
Let $\phi: B(K) \to B(K)$ be a bounded normal linear map with
spectrum contained in $\cc \setminus \{\lambda : \lambda <0 \}$.  We say $\phi$ is \emph{$q$-positive}, and write $\phi \geq_q 0$,
if $\phi(I + t \phi)^{-1}$ is completely positive for all $t \geq 0$.
\end{defn}

We make two observations in light of Definition \ref{qposdef}.
First, it is
not uncommon for a completely positive map to have negative
eigenvalues.  Second, there is no ``slowest rate of failure'' for
$q$-positivity:  For every $s \geq 0$, there exists a linear map
$\phi$ with no negative eigenvalues such that $\phi(I + t
\phi)^{-1}$ ($t \geq 0$) is completely positive if and only if $t \leq s$. These
observations are discussed in detail in section 2.1 of \cite{jankowski3}.

There is a
natural order structure for $q$-positive maps.  If $\phi, \psi:
B(K) \rightarrow B(K)$ are $q$-positive, we say $\phi$
\emph{$q$-dominates} $\psi$ (i.e. $\phi \geq_q \psi$) if $\phi(I + t
\phi)^{-1} - \psi(I + t \psi)^{-1}$ is completely positive for all
$t \geq 0$.  It is not always true that $\phi \geq_q \lambda \phi$ if $\lambda \in (0,1)$
(for a large family of counterexamples, see Theorem~\ref{inverts} below).
However, if $\phi$ is $q$-positive, then for every $s
\geq 0$, we have $\phi \geq_q \phi(I + s \phi)^{-1} \geq_q 0$
(Proposition 4.1 of \cite{jankowski1}).   If these are the only nonzero
$q$-subordinates of $\phi$, we say $\phi$ is \emph{$q$-pure}.

In this paper we will restrict our attention to unital $q$-positive maps over $B(K)$ for $K$ finite-dimensional, and we will approach the case $\dim K=\infty$ in the future.

We have the following result which combines Proposition
3.2 and Corollary 3.3 of \cite{jankowski1}.

\begin{prop} \label{bdryweight} Let $H =\C^n \otimes L^2(0, \infty)$.
Let $\phi: M_n(\C) \rightarrow M_n(\C)$ be a unital $q$-positive map, and let $\nu$ be a type
II Powers weight.  Let $\Omega_\nu: \mathfrak{A}(H) \rightarrow M_n(\C)$
be the map that sends $A=(A_{ij}) \in M_n(\mathfrak{A}(L^2(0, \infty))) \cong \mathfrak{A}(H)$
to the matrix $(\nu(A_{ij})) \in M_n(\C)$.  The map
$\omega: M_n(\C)_* \to
\mathfrak{A}(H)_*$ defined by
\begin{equation*}\omega (\rho) (A) = \rho\Big(\phi(\Omega_\nu(A))\Big), \qquad
\forall A \in \fa(H), \forall\rho \in B(K)_*
\end{equation*}
is the boundary weight map of a unital CP-flow $\alpha$ over $\C^n$
whose minimal flow dilation $\alpha^d$ is an $E_0$-semigroup of type
II$_0$. Furthermore, the generalized boundary representation $\pi_t$ for $\alpha$ satisfies
\begin{equation*}
\pi_t(B) = \phi(I + \nu_t(\Lambda) \phi)^{-1}(\Omega_{\nu_t}(B))
\end{equation*}
for all $t>0$ and $B \in B(H)$.
\end{prop}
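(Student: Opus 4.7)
The plan is to apply Powers' theorem (Theorem~\ref{powerstheorem}) to $\omega$, which requires four verifications: that $\omega$ is a completely positive boundary weight map; that $I + \widehat{\Lambda}\omega_t$ is invertible for each $t>0$; that the corresponding $\widehat{\pi}_t$ is a completely positive contraction; and that the unital normalization $\omega(\rho)(I-\Lambda(I_{\C^n}))=\rho(I_n)$ holds. Complete positivity of $\omega$ reduces to the complete positivity of $\Omega_\nu$ (from positivity of $\nu$) and of $\phi$ (from $q$-positivity via Definition~\ref{qposdef}). The unital condition follows from $\Omega_\nu(I_H - \Lambda(I_{\C^n})) = \nu(I-\Lambda)\, I_n = I_n$ (using that $\nu$ is a Powers weight) and $\phi(I_n)=I_n$.

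The crucial step is the evaluation of $\widehat{\Lambda}\omega_t$. First, $\omega_t(\rho)(A) = \rho(\phi(\Omega_{\nu_t}(A)))$ follows by absorbing the projections $S_tS_t^*$ into the definition of $\nu_t$. Second, for $B=(b_{ij}) \in M_n(\C)$ one has $\Omega_{\nu_t}(\Lambda(B)) = (\nu_t(b_{ij}\Lambda))=\nu_t(\Lambda)\, B$, since $\Lambda$ acts as a scalar on each matrix entry. Together these give the clean identity $\widehat{\Lambda}\omega_t = \nu_t(\Lambda)\widehat{\phi}$, so $I + \widehat{\Lambda}\omega_t = I + \nu_t(\Lambda)\widehat{\phi}$ is invertible because $\nu_t(\Lambda) \geq 0$ and the spectrum of $\phi$ avoids $(-\infty,0)$. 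Dualizing yields
\[
\pi_t(B) = \phi(I+\nu_t(\Lambda)\phi)^{-1}\bigl(\Omega_{\nu_t}(B)\bigr),
\]
which is completely positive because $\phi(I+s\phi)^{-1}$ is completely positive for every $s\geq 0$ by $q$-positivity, and $\Omega_{\nu_t}$ is completely positive.

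To verify the contraction property, unitality of $\phi$ gives $(I+\nu_t(\Lambda)\phi)^{-1}(I_n) = (1+\nu_t(\Lambda))^{-1}I_n$, hence $\pi_t(I_H) = \nu_t(I)(1+\nu_t(\Lambda))^{-1} I_n$, and $\pi_t(I_H) \leq I_n$ is equivalent to $\nu_t(I-\Lambda)\leq 1$. Since $\Lambda$ and $S_tS_t^*$ are commuting multiplication operators on $L^2(0,\infty)$, one may write $S_tS_t^*(I-\Lambda)S_tS_t^* = (I-\Lambda)^{1/2}S_tS_t^*(I-\Lambda)^{1/2}$, and the Powers weight representation $\nu((I-\Lambda)^{1/2}A(I-\Lambda)^{1/2}) = \sum_i (f_i,Af_i)$ with $\sum_i\|f_i\|^2=1$ then gives $\nu_t(I-\Lambda) = \sum_i \|S_tS_t^*f_i\|^2 \leq 1$. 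Theorem~\ref{powerstheorem} now produces the unital CP-flow $\alpha$ with generalized boundary representation as claimed.

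The type II$_0$ conclusion rests on the established principle that the minimal flow dilation is of type I precisely when the boundary weight map is bounded; since $\omega$ inherits unboundedness from $\nu$ (which is type II by hypothesis), $\alpha^d$ cannot be type I, and one invokes the detailed analysis in \cite{powers-CPflows} and \cite{jankowski1} to conclude type II$_0$. The most delicate point in the whole argument is the identification $\widehat{\Lambda}\omega_t = \nu_t(\Lambda)\widehat{\phi}$, because it converts the abstract inversion in Theorem~\ref{powerstheorem} into a problem purely in terms of the finite-dimensional $q$-positive map $\phi$, and thereby makes the closed-form expression for $\pi_t$ possible.
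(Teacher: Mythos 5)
This proposition is not proved in the paper at all: it is imported verbatim as ``a combination of Proposition 3.2 and Corollary 3.3 of \cite{jankowski1}'', so there is no in-paper argument to compare against. Judged on its own, your verification of the CP-flow part is correct and is surely the intended argument. The computation $\omega_t(\rho)(A)=\rho(\phi(\Omega_{\nu_t}(A)))$, the identity $\Omega_{\nu_t}(\Lambda(X))=\nu_t(\Lambda)X$ and hence $\widehat{\Lambda}\omega_t=\nu_t(\Lambda)\widehat{\phi}$, the resulting closed form $\pi_t=\phi(I+\nu_t(\Lambda)\phi)^{-1}\circ\Omega_{\nu_t}$, the complete positivity from $q$-positivity of $\phi$ at the parameter $s=\nu_t(\Lambda)\geq 0$, and the contraction bound via $\pi_t(I_H)=\frac{\nu_t(I)}{1+\nu_t(\Lambda)}I_n$ together with $\nu_t(I-\Lambda)=\sum_i\|S_tS_t^*f_i\|^2\leq 1$ are all right, and you correctly identify the reduction of $I+\widehat{\Lambda}\omega_t$ to a finite-dimensional inversion as the crux. (You should also say a word on why $\omega(\rho)$ actually lies in $\fa(H)_*$, i.e.\ that $A\mapsto\omega(\rho)((I-\Lambda(I_{\C^n}))^{1/2}A(I-\Lambda(I_{\C^n}))^{1/2})$ is normal and bounded; this is immediate from the trace-density form of $\nu$, but it is one of the hypotheses of Theorem~\ref{powerstheorem} and you listed it without checking it.)

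The genuine gap is the type II$_0$ conclusion. The ``established principle'' you invoke --- minimal flow dilation of type I if and only if the boundary weight map is bounded --- is stated in this paper only for boundary weights over $\C$ (the case $n=1$, in the discussion preceding Definition~\ref{powers-weight}); it is not available off the shelf for CP-flows over $\C^n$ with $n>1$, and even granting it you would only obtain ``not type I''. What is actually needed is (i) spatiality of the dilation, which Powers provides for all unital CP-flows, and (ii) the computation that the index is zero, which in \cite{powers-CPflows} is controlled by the normal spine of the boundary weight map (Theorem 4.49 there) and in \cite{jankowski1} is carried out by showing that the normal spine vanishes because $\nu_t(\Lambda)\to\infty$ as $t\to 0^+$ forces $\phi(I+\nu_t(\Lambda)\phi)^{-1}\to 0$, together with an argument ruling out type I$_0$. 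None of this appears in your write-up; you have effectively cited the conclusion rather than derived it. Since that half of the statement is precisely the content of Corollary 3.3 of \cite{jankowski1}, the proof as written establishes the CP-flow, unitality, and boundary-representation claims but not the type II$_0$ claim.
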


In the above proposition, we used the canonical identification
$B(H) \simeq M_n(B(L^2(0, \infty)))$.  Under this identification, the map $\Lambda(I_{\C^n})$
(i.e. $I_{B(\C^n)} \otimes \Lambda(I_\C)$) given by multiplication by $e^{-x}$ in $\cc^n \otimes L^2(0,\infty)$ can
be simply denoted by the diagonal matrix in $M_n(B(L^2(0,\infty)))$ whose $ii$ entry is $\Lambda=\Lambda(I_\cc)$
for each $i=1,\ldots, n$.  Thus one sees that $\mathfrak{A}(\cc^n \otimes L^2(0,\infty))$ is also
canonically isomorphic to $M_n(\mathfrak{A}(L^2(0,\infty)))$.
We note that in tensor notation, the map $\Omega_\nu$ defined in Proposition \ref{bdryweight}
is the map $I_{B(\C^n)} \otimes \nu$ from $M_n(\C) \otimes \fa(L^2(0, \infty))= \fa(\C^n \otimes L^2(0, \infty))$ to $M_n(\C)$.

\begin{defn}
A \emph{boundary weight double} is a pair $(\phi, \nu)$ where $\phi:M_n(\cc) \to M_n(\cc)$ is a unital $q$-positive map and $\nu$ is a Powers weight.  In the notation of the previous proposition, we call $\alpha^d$ the $E_0$-semigroup induced by the boundary weight double $(\phi, \nu)$.
\end{defn}

Motivated by the results
and terminology of \cite{powers-CPflows} and \cite{powers-holyoke}, we
define corners, $q$-corners, and hyper-maximal $q$-corners in an analogous context (Definitions
3.4 and 4.4 of \cite{jankowski1}):
\begin{defn}  Suppose $\phi: B(K_1) \rightarrow B(K_1)$ and $\psi: B(K_2) \rightarrow
B(K_2)$ are normal completely positive maps.  Write each $A \in B(K_1 \oplus K_2)
$ as $A=(A_{ij})$, where $A_{ij} \in B(K_j, K_i)$ for each
$i,j=1,2$.  We say a linear map $\gamma:
B(K_2, K_1) \rightarrow B(K_2, K_1)$ is a \emph{corner} from $\alpha$ to
$\beta$ if $\Theta: B(K_1 \oplus K_2) \rightarrow B(K_1 \oplus K_2)$
defined by
\begin{displaymath} \Theta \left( \begin{array}{cc} A_{11} & A_{12} \\
A_{21} & A_{22} \end{array}  \right) = \left(
\begin{array}{cc} \phi(A_{11}) & \gamma(A_{12}) \\ \gamma^*(A_{21})
& \psi(A_{22}) \end{array} \right)
\end{displaymath} is normal and completely positive.

We say that
$\gamma$ is
a \emph{$q$-corner} from $\phi$ to $\psi$ if $\Theta \geq_q 0$.  A
$q$-corner $\gamma$ is \emph{hyper-maximal} if, whenever
\begin{displaymath}
\Theta \geq_q  \left( \begin{array}{cc} \phi' & \gamma
\\ \gamma^* & \psi' \end{array} \right) \geq_q 0, \end{displaymath}
we have $\phi=\phi'$ and $\psi=\psi'$.
\end{defn}

Hyper-maximal $q$-corners between unital $q$-positive maps $\phi$ and $\psi$
allow us to compare the $E_0$-semigroups induced
by $(\phi, \nu)$ and $(\psi, \nu)$ if $\nu$ is a particular kind
of type II Powers weight (Proposition 4.6 of \cite{jankowski1}).

\begin{prop}\label{hypqc}
Let $\phi: M_n(\C) \rightarrow M_n(\C)$ and $\psi: M_k(\C)
\rightarrow M_k(\C)$ be unital $q$-positive maps, and let $\nu$ be a
type II Powers weight of the form \begin{equation*}\nu \Big((I - \Lambda)^\frac{1}{2}
B (I - \Lambda)^\frac{1}{2}\Big)=(f, Bf) \end{equation*}
where $f \in L^2(0,\infty)$ is a unit vector. The boundary weight
doubles $(\phi, \nu)$ and $(\psi, \nu)$ induce cocycle conjugate
$E_0$-semigroups if and only if there is a hyper-maximal $q$-corner
from $\phi$ to $\psi$.
\end{prop}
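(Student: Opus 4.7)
The plan is to reduce the statement to a claim about hyper-maximal flow corners and then to establish a bijection between $q$-corners from $\phi$ to $\psi$ and flow corners from $\alpha$ to $\beta$ that preserves hyper-maximality. Let $\alpha$ and $\beta$ denote the unital CP-flows over $\cc^n$ and $\cc^k$ induced by $(\phi,\nu)$ and $(\psi,\nu)$ via Proposition~\ref{bdryweight}, so that $\alpha^d$ and $\beta^d$ are the $E_0$-semigroups of type II$_0$ in question. By Theorem~\ref{thm-corners}, the cocycle conjugacy of $\alpha^d$ and $\beta^d$ is equivalent to the existence of a hyper-maximal flow corner from $\alpha$ to $\beta$.

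For the forward direction of the correspondence, given a $q$-corner $\gamma$ from $\phi$ to $\psi$, the block matrix $\Phi = \begin{pmatrix}\phi & \gamma\\ \gamma^* & \psi\end{pmatrix}$ is by definition a unital $q$-positive map on $M_{n+k}(\cc)$. Applying Proposition~\ref{bdryweight} to $(\Phi,\nu)$ produces a unital CP-flow $\widetilde{\alpha}$ over $\cc^{n+k}$. Under the canonical identification $\fa(\cc^{n+k}\otimes L^2(0,\infty)) \cong M_{n+k}(\fa(L^2(0,\infty)))$, the block structure of $\Phi$ and the entrywise action of $\Omega_\nu = I\otimes \nu$ show that the diagonal restrictions of $\widetilde{\alpha}$ are exactly $\alpha$ and $\beta$, so $\widetilde\alpha$ encodes a flow corner $\theta_\gamma$ from $\alpha$ to $\beta$.

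For the reverse direction, given a flow corner $\theta$ from $\alpha$ to $\beta$, Definition~\ref{def-corners} produces a CP-flow $\widetilde{\Theta}$ over $\cc^{n+k}$ whose boundary weight map $\widetilde{\omega}$ is uniquely determined by Proposition~\ref{get-omega}. The diagonal blocks of $\widetilde{\omega}$ must coincide with the boundary weight maps of $\alpha$ and $\beta$. Because $\nu(\cdot)=(f,\cdot f)$, the trace density of $\nu$ is rank one, and this forces the off-diagonal block of $\widetilde\omega$ to be of the form $\rho \mapsto \rho(\gamma(\Omega_\nu(\cdot)))$ for a unique linear map $\gamma\colon B(\cc^k,\cc^n)\to B(\cc^k,\cc^n)$; moreover the complete positivity criterion of Theorem~\ref{powerstheorem} applied to $\widetilde\omega$ translates precisely into the $q$-positivity of the $2\times 2$ block matrix with off-diagonal $\gamma$, so $\gamma$ is a $q$-corner from $\phi$ to $\psi$.

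Finally, hyper-maximality is preserved in both directions, since $q$-subordinates of $\begin{pmatrix}\phi&\gamma\\ \gamma^*&\psi\end{pmatrix}$ with unchanged off-diagonal correspond under the forward procedure to flow subordinates of $\widetilde\alpha$ with unchanged off-diagonal $\theta_\gamma$, and conversely. The main technical obstacle I anticipate is the surjectivity step above --- extracting a unique $q$-corner $\gamma$ at the matrix level from the off-diagonal of an arbitrary flow corner's boundary weight map --- and this is exactly where the hypothesis that $\nu$ be induced by a single unit vector $f$ becomes indispensable, since the rank-one trace density rules out off-diagonal boundary weight maps that do not factor through $\Omega_\nu$.
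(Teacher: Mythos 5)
A preliminary remark: the paper does not actually prove Proposition~\ref{hypqc}; it quotes it as Proposition~4.6 of \cite{jankowski1}, and only fragments of that proof surface here (in the proof of Theorem~\ref{corner-correspondence}). From those fragments one can see that your overall strategy --- translate cocycle conjugacy into the existence of a hyper-maximal flow corner via Theorem~\ref{thm-corners}, pass from a $q$-corner $\gamma$ to a flow corner by feeding the unital $q$-positive map $\Phi$ on $M_{n+k}(\cc)$ with corner $\gamma$ into Proposition~\ref{bdryweight}, and use the rank-one structure of $\nu$ to come back --- is indeed the strategy of the cited proof.

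The genuine gap is in your final paragraph, where you claim that hyper-maximality transfers because ``$q$-subordinates of $\Phi$ with unchanged off-diagonal correspond under the forward procedure to flow subordinates of $\widetilde\alpha$ with unchanged off-diagonal, and conversely.'' Only the forward half of that correspondence is routine. In the other direction, a flow subordinate $\Theta'$ of $\widetilde\alpha$ in the sense of Definition~\ref{def-corners} is an \emph{arbitrary} CP-flow dominated by $\widetilde\alpha$ with the same corner; nothing formal says it is induced by a boundary weight double $(\Phi',\nu)$ for a $q$-subordinate $\Phi'$ of $\Phi$. All one gets for free (Theorem~\ref{boundary-representation-subordinates} together with Proposition~\ref{corner-computing}) is that its generalized boundary representation $\Pi'_t$ is dominated by $\Pi_t=\Phi(I+\nu_t(\Lambda)\Phi)^{-1}\Omega_{\nu_t}$ and agrees with it on the off-diagonal block. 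One must then argue, first, that domination by a map factoring through the rank-one functional $\nu_t=(g_t,\,\cdot\,g_t)$ forces $\Pi'_t$ to factor through $\Omega_{\nu_t}$ as well, and second, that the resulting family of matrix maps is consistent in $t$, i.e.\ arises from a single $q$-subordinate of $\Phi$ via the resolvent formula of Theorem~\ref{powerstheorem}. This is exactly the kind of work Lemma~\ref{diags} does in the Schur-map setting, where the diagonal entries of a subordinate's boundary representation are a priori governed by boundary weights that are merely $q$-subordinate to $\nu$ and must be pinned down by a separate hyper-maximality argument. The same analysis is what would make your ``surjectivity'' step (recovering $\gamma$ from an arbitrary flow corner) work, so the two issues you treat separately --- one flagged as an anticipated obstacle, the other asserted as formal --- are really a single missing lemma, and that lemma is the technical heart of the proposition rather than a verification.
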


If $\phi: M_n(\C) \rightarrow M_n(\C)$ is a unital $q$-positive map
and $U \in M_n(\C)$ is any unitary matrix, then the map $\phi_U(A) := U^*
\phi(UAU^*)U$ is also unital and $q$-positive (Proposition 4.5 of \cite{jankowski1}).
We have the following definition from \cite{jankowski3}.
\begin{defn}
Let $\phi, \psi: M_n(\C) \rightarrow M_n(\C)$ be $q$-positive maps.  We say
$\phi$ is \emph{conjugate} to $\psi$ if $\psi=\phi_U$ for some unitary $U \in M_n(\C)$.
\end{defn}

 If $\phi: M_n(\C) \rightarrow M_n(\C)$
is unital and $q$-positive, then the map $\gamma: M_n(\C) \rightarrow
M_n(\C)$ defined by $\gamma(A)=\phi(AU^*)U$ is a hyper-maximal
$q$-corner from $\phi$ to $\phi_U$ (for details, see the discussion before
Proposition 2.11 of
\cite{jankowski3}).  Applying Proposition \ref{hypqc} gives us:

\begin{prop}\label{arrgh} Let $\phi: M_n(\C) \rightarrow M_n(\C)$
be unital and $q$-positive, and suppose $\psi$
is conjugate to $\phi$.  If $\nu$ is a type II Powers weight
of the form \begin{equation*}\nu\Big((I - \Lambda)^\frac{1}{2}B
(I - \Lambda)^\frac{1}{2}\Big)=(f,Bf),\end{equation*} then $(\phi, \nu)$ and $(\psi, \nu)$ induce
cocycle conjugate $E_0$-semigroups.
\end{prop}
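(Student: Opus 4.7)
The plan is to prove this by direct application of Proposition~\ref{hypqc}, since essentially all the necessary machinery has been assembled in the paragraph immediately preceding the statement. Specifically, the assumption that $\psi$ is conjugate to $\phi$ means $\psi = \phi_U$ for some unitary $U \in M_n(\cc)$, and the excerpt has already noted (citing \cite{jankowski3}) that the map $\gamma: M_n(\cc) \to M_n(\cc)$ defined by $\gamma(A) = \phi(AU^*)U$ is a hyper-maximal $q$-corner from $\phi$ to $\phi_U = \psi$.

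First I would invoke the existence of this hyper-maximal $q$-corner $\gamma$. Then I would check that both $\phi$ and $\psi = \phi_U$ are unital (the unitality of $\phi_U$ is immediate: $\phi_U(I) = U^*\phi(UU^*)U = U^*\phi(I)U = U^*IU = I$) and $q$-positive (this is the content of Proposition~4.5 of \cite{jankowski1} as cited in the excerpt). These are precisely the hypotheses needed to apply Proposition~\ref{hypqc}, together with the assumed form of $\nu$.

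With the hyper-maximal $q$-corner $\gamma$ from $\phi$ to $\psi$ in hand, and with $\nu$ being a type II Powers weight of the special form $\nu((I-\Lambda)^{1/2}B(I-\Lambda)^{1/2}) = (f,Bf)$ for a unit vector $f \in L^2(0,\infty)$, Proposition~\ref{hypqc} yields immediately that the boundary weight doubles $(\phi,\nu)$ and $(\psi,\nu)$ induce cocycle conjugate $E_0$-semigroups, which is the desired conclusion.

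There is no real obstacle here: every nontrivial ingredient (the verification that $\gamma(A) = \phi(AU^*)U$ is a hyper-maximal $q$-corner from $\phi$ to $\phi_U$, and the bijection between hyper-maximal $q$-corners and cocycle conjugacy in this context) is stated earlier or cited as a known result. The proposition is essentially a one-line corollary assembling these facts, and the only step that might warrant a brief reminder is a sentence recalling why $\gamma$ is indeed hyper-maximal, for which the reader may simply be referred to the discussion before Proposition~2.11 of \cite{jankowski3}.
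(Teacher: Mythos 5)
Your proposal matches the paper's own argument exactly: the paper derives Proposition~\ref{arrgh} precisely by citing that $\gamma(A)=\phi(AU^*)U$ is a hyper-maximal $q$-corner from $\phi$ to $\phi_U$ (referring to \cite{jankowski3}) and then applying Proposition~\ref{hypqc}. Your additional remarks verifying that $\phi_U$ is unital and $q$-positive are correct and harmless; there is nothing to change.
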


We will generalize Proposition \ref{arrgh} substantially in Section 3, finding that if $\nu$ is an arbitrary type II Powers
weight and $\phi$ and $\psi$ are conjugate unital $q$-positive maps, then $(\phi, \nu)$
and $(\psi, \nu)$ induce \emph{conjugate} $E_0$-semigroups (Theorem~\ref{conjugation-thm} and Corollary~\ref{uniflow}).

Several cocycle conjugacy results for $E_0$-semigroups have been obtained through
the use of Proposition \ref{hypqc}.  For example, we have the following
(see Proposition~3.3 and Theorem 3.8 of \cite{jankowski2}).

\begin{thm}\label{statesbig}
A unital rank one linear map $\phi: M_n(\C) \rightarrow M_n(\C)$ is $q$-positive if and only if $\phi(A)=\rho(A) I$ where $\rho$ is a state, and $\phi$ is $q$-pure if and only if in addition $\rho$ is faithful.

Let $\phi: M_n(\C) \rightarrow M_n(\C)$ and
$\psi: M_{n'}(\C) \rightarrow M_{n'}(\C)$ be rank one unital $q$-positive
maps, and let $\nu$
be a type II Powers weight of the form \begin{equation*}\nu
\Big((I - \Lambda)^\frac{1}{2}B
(I - \Lambda)^\frac{1}{2}\Big) = (f,Bf).
\end{equation*}
Then the $E_0$-semigroups induced by
$(\phi, \nu)$ and $(\psi, \nu)$ are cocycle conjugate if and only if $\phi$
and $\psi$ are conjugate. 

Furthermore, if the support projection $P$ of $\rho$ satisfies $\rank P>1$ and $\mu$ is any Powers weight, 
then the E$_0$-semigroups induced by $(\phi, \nu)$ and $\mu$ are not cocycle conjugate.
\end{thm}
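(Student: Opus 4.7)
The plan is to dispose of the three assertions in turn.

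For the first assertion, a unital rank one $\phi:M_n(\C) \to M_n(\C)$ must be of the form $\phi(A)=\rho(A)Q$ for some functional $\rho$ on $M_n(\C)$ and $Q\in M_n(\C)$. The unital condition $\rho(I)Q=I$ forces $Q$ to be a nonzero scalar multiple of $I$, so after absorbing the scalar into $\rho$ one may assume $\phi(A)=\rho(A)I$ with $\rho(I)=1$. Such a $\phi$ is idempotent ($\phi^2=\phi$), and decomposing $M_n(\C)=\operatorname{range}(\phi)\oplus\operatorname{ker}(\phi)$ gives
\begin{equation*}
\phi(I+t\phi)^{-1}=\frac{1}{1+t}\,\phi, \qquad t\ge 0,
\end{equation*}
which is completely positive for every $t\ge 0$ iff $\rho$ is positive, i.e., a state. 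For the $q$-purity criterion, I would enumerate the $q$-subordinates of $\phi$ via the Choi matrix of $\tfrac{1}{1+t}\phi$, which is supported on $\operatorname{supp}R\otimes\C^n$ (where $R$ denotes the density matrix of $\rho$). Analyzing CP dominants of $\tfrac{1}{1+t}\phi$ uniformly in $t$ shows that when $R$ is invertible (i.e., $\rho$ faithful) the only nonzero $q$-subordinates are the rescalings $\tfrac{1}{1+s}\phi$ for $s\ge 0$, while a proper invariant subspace of $\operatorname{supp}R$ produces extra $q$-subordinates in the non-faithful case.

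For the second assertion, the backward direction is precisely Proposition~\ref{arrgh}. For the forward direction, assume the induced $E_0$-semigroups are cocycle conjugate. By Proposition~\ref{hypqc}, there exists a hyper-maximal $q$-corner $\gamma:M_{n,n'}(\C)\to M_{n,n'}(\C)$ from $\phi$ to $\psi$, assembled into the $q$-positive block map
\begin{equation*}
\Theta=\begin{pmatrix}\phi & \gamma \\ \gamma^* & \psi\end{pmatrix}
\end{equation*}
on $M_{n+n'}(\C)$. Because both diagonal blocks are rank one with range $\C\cdot I$, the $q$-positivity of $\Theta$ together with the rank one structure of the diagonal forces $\gamma$ to be parametrized by a single matrix $U\in M_{n,n'}(\C)$. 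Hyper-maximality then rules out $U$ being a proper partial isometry, so $U$ must be a full unitary, which forces $n=n'$ and yields the identity $\rho'(A)=\rho(UAU^*)$, that is, $\psi=\phi_U$.

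For the third assertion, assume for contradiction that the $E_0$-semigroup induced by $(\phi,\nu)$ is cocycle conjugate to that of some Powers weight $\mu$, the latter realized as coming from $(\id_{\C},\mu)$. The main difficulty is that $\mu$ need not equal $\nu$, so Proposition~\ref{hypqc} does not apply directly. The plan is to exhibit a cocycle-conjugacy invariant of $\alpha^d$ that records $\rank P$: the generalized boundary representation $\pi_t$ from Proposition~\ref{bdryweight} has one-dimensional range $\C\cdot I_n$, but its effective CP multiplicity, recovered as the number of independent Kraus operators appearing in the limit $t\to 0^+$, equals $\rank P$, whereas for a single Powers weight on $\C$ this multiplicity is $1$. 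Showing this multiplicity is preserved by cocycle conjugacy—via the hyper-maximal flow corner description in Theorem~\ref{thm-corners} combined with the rank-one corner analysis of assertion (2)—is the main technical hurdle; once in place, $\rank P>1$ produces the desired contradiction.
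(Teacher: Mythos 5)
First, a point of reference: the paper does not prove this theorem at all --- it is quoted as a combination of Proposition~3.3 and Theorem~3.8 of \cite{jankowski2} (with the cocycle conjugacy criterion also invoked later as Theorem~3.10 of \cite{jankowski2}), so there is no internal proof to compare against. Judged on its own, your proposal is correct and complete only for the easy half of the first assertion: the reduction to $\phi(A)=\rho(A)I$ and the identity $\phi(I+t\phi)^{-1}=\frac{1}{1+t}\phi$ for the idempotent $\phi$ are right and do show that $q$-positivity is equivalent to $\rho$ being a state. Everywhere else the theorem has real content, you substitute an announcement for an argument. The $q$-purity criterion is only a plan (``I would enumerate the $q$-subordinates\dots''), and in the forward direction of the second assertion the sentence ``the rank one structure of the diagonal forces $\gamma$ to be parametrized by a single matrix $U$'' is exactly the content of Theorem~\ref{mainold}: the classification of $q$-corners between rank-one unital $q$-positive maps, including the facts that a nonzero $q$-corner forces the nonzero eigenvalue lists of the density matrices to agree and that hyper-maximality forces $n=n'$ with a unitary block structure commuting with the density matrix, is the actual work, and it is absent. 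If you are permitted to cite Theorem~\ref{mainold} the second assertion closes; otherwise it remains open.

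The third assertion is where the proposal genuinely fails. You concede that the cocycle-conjugacy invariance of your ``effective CP multiplicity'' is ``the main technical hurdle,'' i.e., the key step is missing; moreover, as stated the invariant is not well defined, because the generalized boundary representation is attached to a particular CP-flow and not to the cocycle conjugacy class of its minimal dilation. CP-flows over spaces of different dimensions ($\C^n\otimes L^2(0,\infty)$ versus $L^2(0,\infty)$) with entirely different boundary weight maps can perfectly well have cocycle conjugate dilations --- this is exactly what Theorem~\ref{invertiblecocycleconjugacy} establishes for invertible $q$-pure $\phi$ --- so ``the number of independent Kraus operators of $\pi_t$ as $t\to 0^+$'' cannot be read off from the dilation without a substantial additional argument. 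The correct route is Theorem~\ref{thm-corners}: after disposing of type~I weights $\mu$ by the type invariant, cocycle conjugacy would force a hyper-maximal flow corner from the CP-flow of $(\phi,\nu)$ over $\C^n$ to the CP-flow of $\mu$ over $\C$, and one must show that no such corner exists when $\rank P>1$. This requires analyzing corners between CP-flows built from \emph{different} Powers weights, which your appeal to the rank-one corner analysis of assertion (2) (valid only for a common weight $\nu$ of the special form $(f,\cdot f)$) does not supply.
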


Theorem \ref{statesbig} shows that rank one $q$-positive maps are extremely
fruitful in constructing non-cocycle conjugate $E_0$-semigroups using boundary
weight doubles.  In section 3, we will extend Theorem \ref{statesbig} to a conjugacy result
(Theorem \ref{rankoneconjugacy}).

In \cite{jankowski1}, a necessary and sufficient condition was found
for a unital invertible map to be $q$-positive (see section 2.2 and
Proposition~6.1 of \cite{jankowski1}), and the invertible unital  $q$-pure
maps were entirely classified up to conjugacy (Theorem~6.11 of \cite{jankowski1}).
In contrast to the rank one case,
boundary weight doubles that combine
unital invertible $q$-pure maps with type II Powers weights of the form $\pure$
all induce cocycle conjugate $E_0$-semigroups:

\begin{thm}\label{inverts}  An invertible unital linear map $\psi: M_n(\C) \rightarrow M_n(\C)$
is $q$-pure
if and only if it is
conjugate to a Schur map $\phi$ that satisfies

\begin{equation*} \phi(a_{jk}e_{jk}) = \left\{
\begin{array}{cc}
\dfrac{a_{jk}}{1+i(\lambda_j - \lambda_k)}e_{jk} & \textrm{if } j<k \\ \noalign{\medskip}
a_{jk}e_{jk} & \textrm{if } j=k \\  \noalign{\medskip}
\dfrac{a_{jk}}{1-i(\lambda_j - \lambda_k)}e_{jk}& \textrm{if } j>k
\end{array} \right.
\end{equation*}
for all $j,k=1,\ldots, n$ and all $A=\sum a_{ij}e_{ij} \in M_n(\C)$,
where $\lambda_1, \ldots, \lambda_n \in \R$ and $\sum_{j=1}^n
\lambda_j = 0$.

If $\nu$ is a type II Powers weight of the form
$\nu((I - \Lambda)^\frac{1}{2} B(I - \Lambda)^\frac{1}{2})=(f,Bf)$, then the $E_0$-semigroup
induced by $(\psi,\nu)$ is cocycle conjugate to the $E_0$-semigroup induced by $(\imath_\C, \nu)$
for $\imath_\C$ the identity map on $\C$.
\end{thm}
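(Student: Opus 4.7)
The first assertion — the classification of invertible unital $q$-pure maps on $M_n(\C)$ up to conjugacy — is the combination of Proposition~6.1 and Theorem~6.11 of \cite{jankowski1}, so I would simply quote those results.

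For the cocycle conjugacy claim, my plan is to combine Propositions~\ref{arrgh} and~\ref{hypqc}. Since $\psi$ is conjugate to the canonical Schur-form map $\phi$, Proposition~\ref{arrgh} reduces the problem to showing that $(\phi,\nu)$ and $(\imath_\C,\nu)$ induce cocycle conjugate E$_0$-semigroups. Because $\nu$ has the special form $\nu((I-\Lambda)^{1/2}B(I-\Lambda)^{1/2})=(f,Bf)$, Proposition~\ref{hypqc} reduces the problem further to the construction of a hyper-maximal $q$-corner between $\imath_\C$ and $\phi$.

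My candidate enlarges the Schur data by one extra eigenvalue equal to zero: set $\tilde\lambda_0 = 0$ and $\tilde\lambda_j = \lambda_j$ for $j=1,\ldots,n$, and let $\tilde\phi \colon M_{n+1}(\C) \to M_{n+1}(\C)$ be the Schur map with multiplier matrix $(\tilde c_{jk})$ having the same form $\tilde c_{jk} = 1/(1 + i(\tilde\lambda_j - \tilde\lambda_k))$. Under the $2 \times 2$ block decomposition splitting off the $(0,0)$ entry from the remaining $n \times n$ block, $\tilde\phi$ equals $\imath_\C$ on the scalar block, $\phi$ on the $M_n$ block, and Schur multiplication by the vector $(1/(1+i\lambda_j))_{j=1}^n$ on the off-diagonal blocks; this last piece is the candidate $\gamma$. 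Complete positivity of $\tilde\phi(I + t\tilde\phi)^{-1}$ for every $t \geq 0$ follows from the integral representation
\begin{equation*}
\tilde\phi(A) = \int_0^\infty e^{-t}\, \tilde D_t A \tilde D_t^*\, dt,
\qquad \tilde D_t = \mathrm{diag}\bigl(1,\, e^{-it\lambda_1},\, \ldots,\, e^{-it\lambda_n}\bigr),
\end{equation*}
which exhibits the multiplier matrix of $\tilde\phi(I + t\tilde\phi)^{-1}$ (whose entries are $1/((1+t) + i(\tilde\lambda_j - \tilde\lambda_k))$) as a positive integral combination of rank-one positive matrices, so $\tilde\phi \geq_q 0$.

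The main obstacle is the hyper-maximality of this $q$-corner. Suppose $\tilde\phi \geq_q \Theta' \geq_q 0$ with $\Theta'$ sharing the off-diagonal $\gamma$. The scalar corner of $\Theta'$ must be $\imath'_\C(a) = ca$ for some $c \in [0,1]$, and by $q$-purity of $\phi$ the matrix corner is either zero or $\phi' = \phi(I + s\phi)^{-1}$ for some $s \geq 0$ (the zero option is immediately ruled out by the nontriviality of $\gamma$). Positivity of every $2 \times 2$ principal minor indexed by a scalar row and a matrix row in the Schur multiplier of $\Theta'(I+t\Theta')^{-1}$ then gives, for each $j$ and all $t \geq 0$,
\begin{equation*}
(c - 1 - s + c\lambda_j^2) + t(c - 1 - cs) \geq 0.
\end{equation*}
Letting $t \to \infty$ forces $c - 1 - cs \geq 0$; combined with $c \leq 1$ and $s \geq 0$, this forces $c = 1$ and $s = 0$, so $\Theta' = \tilde\phi$ and hyper-maximality holds. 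Proposition~\ref{hypqc} then closes the argument.
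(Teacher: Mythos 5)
Your argument is correct, but note that the paper itself offers no proof of Theorem~\ref{inverts}: both halves are quoted from \cite{jankowski1}, and the paper's own contribution is the strengthening to arbitrary type~II Powers weights, Theorem~\ref{invertiblecocycleconjugacy}. Your construction of the corner coincides with the one used there --- the same $(n+1)\times(n+1)$ Schur map obtained by adjoining an extra eigenvalue $0$ (the paper's $\Upsilon$) --- but your verification of hyper-maximality is genuinely different. Because $\nu$ has the rank-one form, Proposition~\ref{hypqc} lets you stay entirely inside $M_{n+1}(\C)$, and the $2\times 2$ principal-minor inequality for the Schur multiplier of $\Theta'(I+t\Theta')^{-1}$, followed by letting $t\to\infty$, does force $c=1$ and $s=0$; I checked the expansion $(c-1-s+c\lambda_j^2)+t(c-1-cs)$ and it is right. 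Two small steps deserve to be made explicit: first, that the matrix-block corner of $\Theta'$ is a $q$-subordinate of $\phi$ (this holds because $\Theta'$ is a generalized Schur map for the decomposition $\C\oplus\C^n$, so $(I+t\Theta')^{-1}$ preserves the blocks and compressing the completely positive difference to the matrix block gives $\phi\geq_q\phi'\geq_q 0$, after which $q$-purity applies); second, your integral representation yields the multiplier $1/(1+i(\lambda_j-\lambda_k))$ uniformly in $j,k$, which is the Hermitian reading of the displayed formula --- the one the paper itself uses, e.g.\ in the computation of $N_t$ in the proof of Theorem~\ref{gauge-groups-final}. By contrast, the paper's proof of Theorem~\ref{invertiblecocycleconjugacy} cannot invoke Proposition~\ref{hypqc} for general $\nu$: it works at the CP-flow level, uses Lemma~\ref{diags} to describe the diagonal corners of a subordinate's boundary representation by weights $\omega_k$ with $\nu\geq_q\omega_k$, and concludes $\omega_k=\nu$ from the hyper-maximality of $\frac{1}{1+i\lambda_k}\nu$ as a $q$-corner from $\nu$ to $\nu$ (Remark~\ref{hypnote}). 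Your route buys a short, self-contained finite-dimensional argument for this special $\nu$; the paper's route buys the statement for every type~II Powers weight.
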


We will show that the conclusion of this theorem holds if $\nu$ is an arbitrary type II Powers weight (Theorem~\ref{invertiblecocycleconjugacy}).

\section{Generalized Schur maps} \label{generalized-Schur}

Recall that a map $\phi: M_n(\cc) \to M_n(\cc)$ is said to be a \emph{Schur map} if there exists a matrix $Q=(q_{ij}) \in M_n(\cc)$ such that
$$
\phi \big( (x_{ij}) \big) = ( q_{ij} x_{ij} )
$$
In this section we consider a slight generalization of Schur maps and their relationship to  corners between CP-flows and other maps, encapsulating
results that are frequently needed in the comparison theory of CP-flows and in the remainder of the paper.

Powers has defined a related concept of Schur diagonal boundary weight maps (see Definition 4.31 of \cite{powers-CPflows}),
which is a special case of generalized Schur maps as defined below. A boundary weight map
$\omega:B(\cc^n)_* \to \fa(\cc^n \otimes L^2(0,\infty))_*$  is Schur diagonal in the sense
of Powers if and only if it is a generalized Schur map with respect to the decompositions
$\cc \oplus \cc \oplus \cdots \oplus \cc$ and $L^2(0,\infty) \oplus L^2(0,\infty) \oplus \cdots \oplus L^2(0,\infty)$ according to the following definition.

For each $i=1, 2, \dots, n$, let $K_i$ and $H_i$ be Hilbert spaces, and let $K=\bigoplus_{i=1}^n K_i$ and $H=\bigoplus_{i=1}^n H_i$. Let for $i=1,\dots, n$, $V_i:K_i \to K$ and $W_i:H_i \to H$ be the canonical isometries. Given operators $A \in B(K)$ and $B \in B(H)$, and  for $i,j=1,2, \dots, n$ given operators  $X \in B(K_j, K_i), Z \in B(H_j, H_i)$, we define
\begin{align*}
A_{ij} & = V_i^*AV_j \in B(K_j, K_i) &  X^{ij} & = V_iXV_j^* \in B(K) \\
B_{ij} & = W_i^*BW_j \in B(H_j, H_i) & Z^{ij} & = W_iZW_j^* \in B(H)
\end{align*}
In particular,
$$
(X^{ij})_{rs} = \delta_{ir} \delta_{js} X.
$$

Given a subalgebra $\fa$ of $B(H)$, and for each $i,j=1,2, \dots, n$, let $\fa_{ij} = W_i^*\fa W_j$. Suppose that for all $i,j=1,2, \dots, n$,
\begin{equation}\label{contains-corners}
W_i\fa_{ij}W_j^* \subseteq \fa.
\end{equation}
Given a linear map $\phi: \fa \to B(K)$, for each $i,j=1,2, \dots, n$ we define the linear map
$\phi_{ij} : \fa _{ij} \to B(K_j, K_i)$ given by
$$
\phi_{ij}(X) = [\phi(X^{ij})]_{ij}
$$
We say that $\phi$ is a \emph{generalized Schur map} with respect to the decompositions $\bigoplus_{i=1}^n K_i$ and $\bigoplus_{i=1}^n H_i$ if for all $A \in \fa$,
$$
[\phi (A)]_{ij} = \phi_{ij}( A_{ij}) .
$$
In particular, if $\phi$ is a generalized Schur map and if $X \in B(K_j, K_i)$, then
$$
\phi(X^{ij}) = [\phi_{ij}(X)]^{ij}.
$$

A similar definition applies to maps from $B(K)_*$ to the algebraic dual $\fa'$. If $\rho \in B(K)_*$ and $\eta \in \fa'$, we define for each $i,j=1,2, \dots, n$ the linear functionals $\rho_{ij} \in B(K_j, K_i)'$ and $\eta_{ij} \in \fa_{ij}'$ given by
$$
\rho_{ij} (X) = \rho(X^{ij}), \qquad \eta_{ij}(Z) = \eta(Z^{ij}),
$$
for all $X \in B(K_j, K_i)$ and $Z \in \fa_{ij}$. For each $\mu \in B(K_j, K_i)'$, we define $\mu^{ij} \in B(K)'$ given by
$$
\mu^{ij}(A) = \mu(A_{ij}).
$$
Given a map $\Psi: B(K)_* \to \fa'$ and $i,j=1,2, \dots, n$, we define $\Psi_{ij} : B(K_j, K_i)' \to \fa_{ij}'$ by
$$
\Psi_{ij}(\mu) = [\Psi(\mu^{ij})]_{ij}.
$$
We say that $\Psi : B(K)_* \to \fa'$ is a \emph{generalized Schur map with respect to the decompositions} $\bigoplus_{i=1}^n K_i$ and  $\bigoplus_{i=1}^n H_i$ if
$$
[\Psi(\rho)]_{ij} = \Psi_{ij}(\rho_{ij}).
$$
We observe that if $\Psi$ is a generalized Schur map and $\rho \in B(K)_*$, then
$\Psi([\rho_{ij}]^{ij})=[\Psi_{ij}(\rho_{ij})]^{ij}$.

If $\phi: B(H) \to B(K)$ is a normal generalized Schur map, then it follows that  $\widehat{\phi}$ is also a generalized Schur map: given $i,j=1,2, \dots, n$, $X \in B(K_j, K_i)$ and $\rho \in B(K)_*$,
\begin{align*}
[\widehat{\phi}(\rho)]_{ij} (X) & = \widehat{\phi}(\rho)(X^{ij}) = \rho( \phi(X^{ij})) = \rho([\phi_{ij}(X)]^{ij}) = \rho_{ij} (\phi_{ij}(X))  \\
&= \rho_{ij} ([\phi(X^{ij})]_{ij})
=  [\rho_{ij}]^{ij} (\phi(X^{ij}))
= \widehat{\phi}([\rho_{ij}]^{ij}) (X^{ij})
= [\widehat{\phi}([\rho_{ij}]^{ij})]_{ij}(X)\\
& = [\widehat{\phi}]_{ij}(\rho_{ij})(X)
\end{align*}
Analogously, if $\phi:B(H) \to B(K)$ is a normal map such that $\widehat{\phi}$ is a generalized Schur map,
then $\phi$ itself is also a generalized Schur map.

The following statement will be employed in the subsequent proposition. It refers to an elementary fact about completely positive maps on C$^*$-algebras, and we include a proof for convenience.

\begin{lem} \label{matrixzeroes}
Let $A, B$ be unital C$^*$-algebras and let $\phi: A \to B$ be a contractive completely positive map. Suppose that $\{e_i:i=1, \dots, n\}$ and $\{f_j : j=1, \dots, n\}$ are families of mutually orthogonal projections summing up to the identity in $A$ and $B$, respectively. If for all $i\neq j$,
$$
f_i\phi(e_j)f_i = 0
$$
then for all $i,j$, and $a \in A$,
$$
f_i\phi(e_iae_j)f_j= \phi(e_iae_j) = f_i \phi(a) f_j.
$$
\end{lem}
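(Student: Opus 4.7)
The plan is to prove this by Stinespring dilation. Embedding $B$ faithfully in some $B(H')$, complete positivity of $\phi$ produces a Hilbert space $H$, a unital $\ast$-representation $\pi\colon A\to B(H)$, and a bounded operator $V\colon H'\to H$ with $\phi(a)=V^*\pi(a)V$ for every $a\in A$; contractivity of $\phi$ forces $\|V\|\le 1$, and unitality of $A$ lets us take $\pi$ unital.

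The first substantive step is to reformulate the hypothesis at the Stinespring level. Since $\pi(e_j)$ is a projection, the equality
\[
f_i\phi(e_j)f_i=(Vf_i)^*\pi(e_j)(Vf_i)=0
\]
forces $\pi(e_j)Vf_i=0$ whenever $i\ne j$. Summing over the index that vanishes (using $\sum_j\pi(e_j)=I$ and $\sum_j f_j = I$), and then taking adjoints, I would derive the four symmetric identities
\[
Vf_i=\pi(e_i)Vf_i,\quad f_iV^*=f_iV^*\pi(e_i),\quad \pi(e_j)V=\pi(e_j)Vf_j,\quad V^*\pi(e_j)=f_jV^*\pi(e_j).
\]

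The conclusion is then a one-line calculation. Multiplicativity of $\pi$ together with the four identities used to insert or remove the idempotents $\pi(e_i)$ and $\pi(e_j)$ on either side of $\pi(a)$ gives
\[
\phi(e_iae_j)=V^*\pi(e_i)\pi(a)\pi(e_j)V=f_iV^*\pi(e_i)\pi(a)\pi(e_j)Vf_j=f_iV^*\pi(a)Vf_j=f_i\phi(a)f_j,
\]
and the sandwich $f_i\phi(e_iae_j)f_j=f_i(f_i\phi(a)f_j)f_j=\phi(e_iae_j)$ then follows automatically.

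I do not anticipate a serious obstacle; the only items requiring attention are the standard choice of a unital $\pi$ in Stinespring and keeping straight the four left/right variants of the vanishing identity on $V$. A purely C$^*$-algebraic alternative would first observe that positivity of $\phi(e_j)$ together with $f_i\phi(e_j)f_i=0$ implies $\phi(e_j)=f_j\phi(e_j)f_j$, and then extend from $e_j$ to $e_iae_j$ using the Kadison--Schwarz inequality and a multiplicative-domain argument; but the Stinespring route is more transparent and does not need to distinguish between the unital and contractive cases beyond bounding $\|V\|$.
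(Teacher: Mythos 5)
Your proof is correct, but it takes a genuinely different route from the paper's. The paper argues entirely inside the C$^*$-algebras: it first uses the Schwarz inequality $\phi(c)^*\phi(c)\le\phi(c^*c)$ (valid because $\phi$ is a completely positive contraction) to upgrade $f_i\phi(e_j)f_i=0$ to $\phi(e_j)f_i=0$ for $i\ne j$, deduces $\phi(e_j)\le f_j$, then applies Schwarz again to $\phi(e_iae_j)$ to get $\phi(e_iae_j)^*\phi(e_iae_j)\le f_j$, hence $\phi(e_iae_j)f_j=\phi(e_iae_j)$, and finally sums $\phi(a)=\sum_{r,s}\phi(e_rae_s)$ and compresses by $f_i,f_j$. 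You instead lift to a Stinespring dilation $\phi=V^*\pi(\cdot)V$ with $\pi$ unital, where the hypothesis becomes the exact vanishing $\pi(e_j)Vf_i=0$ for $i\ne j$ (since $f_i\phi(e_j)f_i=(\pi(e_j)Vf_i)^*(\pi(e_j)Vf_i)$), and the four insertion/deletion identities then give the conclusion by pure multiplicativity of $\pi$. Both arguments are sound; your verification of the four identities and the final chain of equalities checks out. What the dilation route buys is transparency and a slightly stronger statement: contractivity of $\phi$ is never actually used (any completely positive map on a unital algebra admits a Stinespring dilation with $\pi$ unital, and $\|V\|\le 1$ plays no role in your computation), whereas the paper's Schwarz-inequality steps genuinely use $\phi(1)\le 1$. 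What the paper's route buys is that it stays inside the abstract C$^*$-algebras with no representation of $B$ needed; the ``purely C$^*$-algebraic alternative'' you sketch at the end is essentially the argument the paper gives.
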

\begin{proof}
Observe that for $i\neq j$,
$$
f_i\phi(e_j)^*\phi(e_j)f_i \leq f_i\phi(e_j^*e_j)f_i = 0
$$
since $\phi$ is a completely positive contraction. Hence $\phi(e_j)f_i=0$, and by taking adjoints, $f_i\phi(e_j)=0$ for all $i\neq j$. Since $\sum_{j=1}^nf_j=1$, we have that
$$
\phi(e_j) = f_j\phi(e_j) f_j \leq f_j
$$
Let $a$ be a positive contraction in $A$. We have that for all $i,j$, and $a \in A$ a contraction,
\begin{align*}
[\phi(e_iae_j)]^*[\phi(e_iae_j)]
& \leq \phi(e_ja^*e_i) \phi(e_iae_j) \\
& \leq \phi(e_ja^*e_i e_i ae_j)
 \leq \phi(e_j)  \leq f_j
\end{align*}
Now observe that if $c\in A$ and $c^*c \leq f_j$, then $(1-f_j)c^*c(1-f_j)=0$ hence $cf_j=c$. Thus we have that $\phi(e_iae_j)=\phi(e_iae_j)f_j$. Thus we conclude by taking adjoints and reapplying the identity that for all $i,j$, for all contractions $a \in A$,
$$
f_i\phi(e_iae_j)f_j = \phi(e_iae_j).
$$
Thus
$$
f_i\phi(a)f_j = f_i \left( \sum_{r,s=1}^n \phi(e_r a e_s) \right) f_j =
f_i \left(\sum_{r,s=1}^n  f_r\phi(e_r a e_s)f_s \right) f_j = f_i\phi(e_iae_j)f_j.
$$
The lemma now follows by linearity.
\end{proof}

\begin{prop} \label{gen-schur-flow-weight}
For each $i=1, \dots, n$, let $K_i$ be a separable Hilbert space, and let $H_i = K_i \otimes L^2(0,\infty)$. Define
$K=\bigoplus_{i=1}^n K_i$ and $H=\bigoplus_{i=1}^n H_i = K \otimes L^2(0,\infty)$. Suppose that $\alpha$ is a CP flow over $K$ with  boundary weight map $\omega: B(K)_* \to  \fa(H)_*$ and generalized boundary representation $\pi_t$ for $t>0$.
Then $\alpha_t$ is a generalized Schur map with respect to $\bigoplus_{i=1}^n H_i$ for every $t>0$ if and only if
$\omega$ as well as  $\omega_t$ and $\pi_t$ are generalized Schur maps with respect to $\bigoplus_{i=1}^n K_i$ and
$\bigoplus_{i=1}^n H_i$ for every $t>0$.
\end{prop}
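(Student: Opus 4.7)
The strategy is to exploit the fact that $\Lambda$, $\Gamma$, and compressions of the form $A \mapsto S_xS_x^*AS_xS_x^*$ are all generalized Schur maps, since $\Lambda(I_K)$ and the shift $S_t$ respect the canonical block decompositions ($\Lambda(I_K)$ acts as $e^{-x}$ on each $H_i$, and $S_t$ restricts to the right shift on each $H_i = K_i \otimes L^2(0,\infty)$). Their preduals are generalized Schur by the observation at the start of this section. Moreover, the collection of generalized Schur maps forms an algebra closed under composition, finite sums, inversion (when the inverse exists, since a generalized Schur map acts block-by-block, and its inverse inverts each block), and pointwise ultraweak limits.

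For the forward direction, assume each $\alpha_t$ is generalized Schur. Then $R_\alpha(A) = \int_0^\infty e^{-t}\alpha_t(A)\,dt$ is generalized Schur by direct integration of the block expressions, so $\hat R_\alpha - \hat\Gamma$ is generalized Schur. Fix $i,j$ and $x > 0$, and let $T \in S_xS_x^*B(H)S_xS_x^*$ with $T = P_iTP_j$, where $P_i = W_iW_i^*$. For any $\eta \in B(H)_*$ with $\hat\Lambda\eta = \rho$, the argument $T - e^{x-y}S_{y-x}TS_{y-x}^*$ in formula~\eqref{got-omega} remains supported in the $(i,j)$-block, so the Schur property of $\hat R_\alpha - \hat\Gamma$ makes the expression depend on $\eta$ only through $\eta_{ij}$, and in fact coincides with its value when $\eta$ is replaced by $\eta_{ij}^{ij}$. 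Since $\hat\Lambda$ is itself generalized Schur, $\hat\Lambda(\eta_{ij}^{ij}) = \rho_{ij}^{ij}$, so formula~\eqref{got-omega} yields $\omega(\rho)(T) = \omega(\rho_{ij}^{ij})(T)$. Proposition~\ref{get-omega} extends this identity to all $T$ of the form $P_iTP_j$ in $\fa$, proving $\omega$ is generalized Schur. Then $\omega_t$ is generalized Schur (compression by $S_tS_t^*$ preserves block structure), and $\hat\pi_t = \omega_t(I + \hat\Lambda\omega_t)^{-1}$ is generalized Schur by the closure properties.

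For the converse, assume $\omega$ is generalized Schur. The identity $\hat R_\alpha = \hat\Gamma \circ (I + \omega\hat\Lambda)$ exhibits $R_\alpha = R_\alpha^{(1)}$ as a composition of generalized Schur maps, hence generalized Schur. The standard resolvent equation $R_\alpha^{(\lambda)} = R_\alpha^{(1)}(I - (1-\lambda)R_\alpha^{(1)})^{-1}$ propagates this to all resolvents $R_\alpha^{(\lambda)}$ for $\lambda > 0$, and the Hille-Yosida formula $\alpha_t(A) = \lim_{n\to\infty}(n/t)^n\bigl[R_\alpha^{(n/t)}\bigr]^n(A)$ realizes each $\alpha_t$ as a pointwise ultraweak limit of generalized Schur maps, so $\alpha_t$ itself is generalized Schur.

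The principal obstacle is in the forward direction: formula~\eqref{got-omega} gives $\omega(\rho)$ as a limit that explicitly involves an auxiliary $\eta$ with $\hat\Lambda\eta = \rho$, and this $\eta$ is not uniquely determined. We circumvent this by using the Schur structure of $\hat R_\alpha - \hat\Gamma$ to reduce the expression to a dependence only on $\eta_{ij}$, and then realize $\rho_{ij}^{ij}$ as $\hat\Lambda(\eta_{ij}^{ij})$, which makes the desired $(i,j)$-block Schur equation $\omega(\rho)(T) = \omega(\rho_{ij}^{ij})(T)$ fall out cleanly.
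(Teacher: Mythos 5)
Your proof is correct. The forward direction is essentially identical to the paper's: both use formula \eqref{got-omega} together with the generalized Schur property of $\widehat{R}_\alpha - \widehat{\Gamma}$ to reduce the dependence on the auxiliary functional $\eta$ to its $(i,j)$-component (replacing $\eta$ by $(\eta_{ij})^{ij}$ and hence $\rho$ by $(\rho_{ij})^{ij}$), then invoke Proposition~\ref{get-omega} to pass from operators of the form $S_xS_x^*BS_xS_x^*$ to all of $\fa_{ij}$, and finally obtain $\omega_t$ and $\pi_t$ via the inverse formula and closure properties. The converse diverges at the last step. The paper, having established that $R_\alpha$ is a generalized Schur map, evaluates it at the block projections $E_i$: since $[R_\alpha(E_i)]_{jj} = \int_0^\infty e^{-t}[\alpha_t(E_i)]_{jj}\,dt = 0$ for $i \ne j$ and the integrand is positive, it concludes $[\alpha_t(E_i)]_{jj}=0$ for all $t$, and then Lemma~\ref{matrixzeroes} --- which exploits the complete positivity of $\alpha_t$ --- yields the Schur property of $\alpha_t$. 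You instead recover $\alpha_t$ directly from $R_\alpha$ via the resolvent equation and the Post--Widder inversion formula, using closure of generalized Schur maps under composition, inversion, and pointwise ultraweak limits. Both routes are valid: yours requires the fact that the predual semigroup $\widehat{\alpha}$ is a $C_0$-semigroup so that the inversion formula applies (a fact the paper itself uses implicitly in the proof of Proposition~\ref{corner-computing}), and it avoids complete positivity entirely, so it would apply to any weak* continuous semigroup of normal maps with a generalized Schur resolvent; the paper's route is more elementary and self-contained, trading the semigroup machinery for the positivity argument and Lemma~\ref{matrixzeroes}.
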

\begin{proof}
We will make use of the notation introduced in the definition of generalized Schur maps. 
Denote by $S^{(i)}_t$ the right shift semigroup on $H_i$, for $i=1,2$ and let $S_t=S^{(1)}_t \oplus S^{(2)}_t$ be the right shift semigroup on $H$. We remark that the null boundary algebra
$\fa(H)$ satisfies the property \eqref{contains-corners}, because $\Lambda(I_K) = \bigoplus_{i=1}^n \Lambda(I_{K_i})$. Thus it makes sense to claim that $\omega$ is a generalized Schur map with respect to $\bigoplus_{i=1}^n K_i$ and $\bigoplus_{i=1}^n H_i$.

Let $\Gamma$ be the map defined by \eqref{gamma}.
It is easy to check that $\Gamma$ is a generalized Schur map on $B(H)$ with respect to $\bigoplus_{i=1}^n H_i$,
hence $\hatG$ is a generalized Schur maps on $B(H)_*$ with respect to  $\bigoplus_{i=1}^n H_i$.  Similarly, $\Lambda: B(K) \to B(H)$  and $\widehat{\Lambda}$ are generalized Schur maps with respect to the decompositions $\bigoplus_{i=1}^n K_i$ and $\bigoplus_{i=1}^n H_i$.

Suppose that $\alpha_t$ is a generalized Schur map for every $t>0$, and let $R_\alpha$ be its resolvent defined by \eqref{resolvent-pure}.  It is clear that $R_\alpha$ and $\widehat{R}_{\alpha}$ are generalized Schur maps
with respect to  $\bigoplus_{i=1}^n H_i$.
Let $\rho \in B(K)_*$ be fixed, and let $\eta \in B(H)_*$ be any normal functional such that
$\rho=\widehat{\Lambda}(\eta)$, and therefore $\rho_{ij}=\widehat{\Lambda}_{ij}(\eta_{ij})$. Then by \eqref{got-omega}, we have that for all $x>0$ and $T \in S^{(i)}_x {S^{(i)}_x}^* B(H) S^{(j)}_x {S^{(j)}_x}^*$, and for all $i,j=1,2, \dots,n$,
\begin{align} \nonumber
\left[\omega(\rho)\right]_{ij}(T) & = \omega(\rho)(T^{ij}) =
\lim_{y \to x+} \frac{1}{y-x}
(\widehat{R}_\alpha - \widehat{\Gamma})(\eta) ( T^{ij} - e^{x-y} S_{y-x}T^{ij}S_{y-x}^*) \\ \nonumber
& = \lim_{y \to x+} \frac{1}{y-x}
(\widehat{R}_\alpha - \widehat{\Gamma})_{ij}(\eta_{ij}) ( T - e^{x-y} S^{(i)}_{y-x}TS^{(j)^*}_{y-x}) \\ \nonumber
& = \lim_{y \to x+} \frac{1}{y-x}
\left[(\widehat{R}_\alpha - \widehat{\Gamma})([\eta_{ij}]^{ij})\right]_{ij} ( T - e^{x-y} S^{(i)}_{y-x}TS^{(j)^*}_{y-x}) \\ \nonumber
& = \lim_{y \to x+} \frac{1}{y-x}
(\widehat{R}_\alpha - \widehat{\Gamma})([\eta_{ij}]^{ij}) (T^{ij} - e^{x-y} S_{y-x}T^{ij}S_{y-x}^*) \\
  \label{gotten}
&= \omega([\rho_{ij}]^{ij})(T^{ij})= \omega_{ij}(\rho_{ij})(T).
\end{align}

If $A \in \fa_{ij}$, then for the operators $\{A_x\}_{x>0}$ defined by 
$A_x= S^{(i)}_x {S^{(i)}_x}^* A S^{(j)}_x {S^{(j)}_x}^*$, we have $A_x^{ij}= S_x S_x^* A^{ij} S_x S_x^*$,
so using equation \eqref{gotten} and the fact that $\omega(\rho) \in \mathfrak{A}(H)_*$
since $\rho \in
B(K)_*$, we obtain by Proposition~\ref{get-omega} that
\begin{align*}
[\omega(\rho)]_{ij}(A) & = \omega(\rho)(A^{ij})= \lim_{x \rightarrow 0+} \omega(\rho)(A_x ^{ij})
= \lim_{x \rightarrow 0+} \omega([\rho_{ij}]^{ij})(A_x^{ij})= \omega([\rho_{ij}]^{ij})(A^{ij}) \\
& = \omega_{ij}(\rho_{ij})(A).
\end{align*}
Thus we have shown that $[\omega(\rho)]_{ij}= \omega_{ij}(\rho_{ij})$, hence $\omega$ is a generalized Schur map with respect to the decompositions $\bigoplus_{i=1}^n K_i$ and $\bigoplus_{i=1}^n H_i$. It follows immediately that for every $t>0$, $\omega_t$ is also a generalized Schur map with respect to the decompositions $\bigoplus_{i=1}^n K_i$ and $\bigoplus_{i=1}^n H_i$.

For each $t>0$, by Theorem~\ref{powerstheorem}, $\hat{\pi}_t= \omega_t(I + \hat{\Lambda}\omega_t)^{-1}$. A simple computation shows that the inverse of a generalized Schur map is also a generalized Schur map (with respect the reverse decompositions), whereby it follows from the previous paragraph that $\widehat{\pi}_t$ is a generalized Schur map with respect to the decompositions $\bigoplus_{i=1}^n K_i$ and $\bigoplus_{i=1}^n H_i$.
Therefore, by the observation preceding Lemma~\ref{matrixzeroes}, $\pi_t$ is a generalized Schur map for every $t>0$.

Conversely, suppose that $\omega$ is a generalized Schur map.  It follows trivially that $\omega_t$ is a generalized Schur map
for every $t>0$, and the argument given in the previous paragraph shows that
$\pi_t$ is a generalized Schur map for every $t>0$.  By equation \eqref{resolvent}, $\widehat{R}_\alpha$
is the composition of generalized Schur maps with respect
to the decomposition $\bigoplus_{i=1}^n H_i$
and is thus a generalized Schur map,
hence $R_\alpha$ is a generalized Schur map with respect to the decomposition $\bigoplus_{i=1}^n H_i$.
For each $i=1, \ldots, n$, let $W_i:H_i \to H$ be the canonical isometric embedding and let $E_i =W_iW_i^*\in B(H)$ be the projection onto the subspace of $H$ associated to $H_i$.
If $i, j \in \{1, \ldots, n\}$ and $i \neq j$,  then $(E_i)_{jj}=W_j^*E_iW_j=W_j^*E_jE_iE_jW_j=0$, and from equation \eqref{resolvent} and the fact that $R_\alpha$
is a generalized Schur map, it follows that
$$
0= (R_\alpha)_{jj}\Big((E_i)_{jj}\Big) = \Big[R_\alpha(E_i)\Big]_{jj} = \Big[ \int_0 ^\infty e^{-t} \alpha_t(E_i) dt \Big]_{jj}
= \int_0 ^\infty e^{-t} [\alpha_t(E_i)]_{jj} dt.
$$
Since $[\alpha_t(E_i)]_{jj}=W_j^*\alpha_t(E_i)W_j$ is a positive operator in $B(H_j)$ for every $t \geq 0$,
the above equation implies that $[\alpha_t(E_i)]_{jj}=0$ for every $t \geq 0$.  Now let $t \geq 0$
be fixed. Note that $E_i A E_j
= (A_{ij})^{ij}$ for all $A \in B(H)$, so
\begin{equation}\label{eis} E_j \alpha_t(E_i)E_j = \Big([\alpha_t(E_i)]_{jj}\Big)^{jj} = 0 \end{equation}
whenever $i \neq j$.  Note that the projections $E_1, \ldots, E_n$ are mutually orthogonal
and sum to $I$.  Therefore, by Lemma \ref{matrixzeroes} and equation \eqref{eis} we have
$E_i \alpha_t(A) E_j = E_i \alpha_t(E_i A E_j) E_j$
for all $A \in B(H)$ and $i,j=1, \ldots n$, so
\begin{align*}
\Big([\alpha_t(A)]_{ij}\Big)^{ij} = E_i \alpha_t(A) E_j = E_i \alpha_t(E_i A E_j) E_j  = E_i \Big( \alpha_t[(A_{ij})^{ij}] \Big)
E_j = \Big[(\alpha_t)_{ij}(A_{ij})\Big]^{ij},
\end{align*}
hence $[\alpha_t(A)]_{ij} = (\alpha_t)_{ij}(A_{ij})$.
\end{proof}

The following proposition will be useful in analyzing flow corners:

\begin{prop}\label{corner-computing}
For each $i=1, \dots, n$, let $K_i$ be a separable Hilbert space, and let $H_i = K_i \otimes L^2(0,\infty)$.
Define $K=\bigoplus_{i=1}^n K_i$ and $H=\bigoplus_{i=1}^n H_i = K \otimes L^2(0,\infty)$.
Let $\vartheta$ and $\vartheta'$ be CP flows over $K$ with boundary weight maps $\omega$ and $\omega'$,
and generalized boundary representations $\pi_t$ and $\pi_t'$. Suppose that $\vartheta_t$
and $\vartheta_t'$ are generalized Schur maps for every $t>0$, and let $i,j \in \{1, \ldots, n\}$.
Then $[\vartheta_t]_{ij}=[\vartheta'_t]_{ij}$  for all $t>0$
if and only if, for every $\rho \in B(K)_*$, $A \in B(H)$, and $t>0$,
$$
[\omega(\rho)]_{ij} = [\omega'(\rho)]_{ij}, \qquad  [\pi_t(A)]_{ij} =  [\pi'_t(A)]_{ij}.
$$
\end{prop}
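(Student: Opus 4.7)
The plan is to invoke Proposition~\ref{gen-schur-flow-weight} to ensure that $\omega, \omega', \omega_t, \omega'_t, \pi_t, \pi'_t$ are all generalized Schur maps with respect to $\bigoplus K_i$ and $\bigoplus H_i$, and then verify that the formulas relating $\vartheta$, $\omega$, and $\pi_t$, namely \eqref{resolvent-pure}, \eqref{resolvent}, \eqref{got-omega}, and $\hat{\pi}_t = \omega_t(I+\hat\Lambda\omega_t)^{-1}$, all preserve the $(i,j)$-component. This rests on two facts: composition and inversion of generalized Schur maps with common decompositions commute with the componentwise slicing $\phi \mapsto \phi_{ij}$; and the fixed auxiliary maps $\Lambda, \Gamma$ (hence $\hat\Lambda, \hat\Gamma$) are themselves generalized Schur maps whose $(i,j)$-components do not depend on $\vartheta$ or $\vartheta'$.

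\emph{Forward direction.} Given $[\vartheta_t]_{ij}=[\vartheta'_t]_{ij}$ for all $t>0$, integration against $e^{-t}\,dt$ in \eqref{resolvent-pure} yields $[R_\vartheta]_{ij}=[R_{\vartheta'}]_{ij}$. Applying \eqref{got-omega} componentwise (using the surjectivity of $[\hat\Lambda]_{ij}$ to choose suitable $\eta_{ij}$ corresponding to $\rho_{ij}$, as in the proof of Proposition~\ref{gen-schur-flow-weight}) we obtain $[\omega]_{ij}=[\omega']_{ij}$. Truncation gives $[\omega_t]_{ij}=[\omega'_t]_{ij}$ for every $t>0$, and the $(i,j)$-slice of $\hat{\pi}_t = \omega_t(I+\hat\Lambda\omega_t)^{-1}$ then yields $[\hat{\pi}_t]_{ij}=[\hat{\pi}'_t]_{ij}$, equivalently $[\pi_t]_{ij}=[\pi'_t]_{ij}$.

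\emph{Reverse direction.} First, the two hypothesized conditions are mutually equivalent. Rearranging $\hat{\pi}_t(I+\hat\Lambda\omega_t) = \omega_t$ to $\omega_t = (I-\hat{\pi}_t\hat\Lambda)^{-1}\hat{\pi}_t$, the $(i,j)$-component of $\omega_t$ is recovered from that of $\pi_t$; and by Proposition~\ref{get-omega}, $[\omega]_{ij}$ is determined by the family $\{[\omega_t]_{ij}\}_{t>0}$. So we may assume $[\omega]_{ij}=[\omega']_{ij}$. Equation \eqref{resolvent} taken in the $(i,j)$-slice then gives $[\hat R_\vartheta]_{ij}=[\hat R_{\vartheta'}]_{ij}$. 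Because $\vartheta_t$ is a generalized Schur map, the subspace $W_i B(H_j, H_i) W_j^*\subset B(H)$ is invariant and $t\mapsto [\vartheta_t]_{ij}$ is a weak-$*$ continuous one-parameter semigroup of bounded linear maps on $B(H_j, H_i)$; its resolvent at $\lambda=1$ is $[R_\vartheta]_{ij}$. By the standard Hille--Yosida recovery of a weak-$*$ continuous semigroup on a dual Banach space from its resolvent at a single positive $\lambda$ (via the generator $L = 1 - R_1^{-1}$), we conclude $[\vartheta_t]_{ij}=[\vartheta'_t]_{ij}$ for all $t>0$.

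The main obstacle is the final semigroup-recovery step, since $B(H_j, H_i)$ is not a C$^*$-algebra when $i\neq j$. However, it is a weak-$*$ closed subspace of $B(H)$, and the dual-Banach-space version of Hille--Yosida applies to the bounded, weak-$*$ continuous semigroup $\{[\vartheta_t]_{ij}\}$, so this step is routine. All other steps reduce to bookkeeping of the component structure of generalized Schur maps under the algebraic operations appearing in Powers's construction.
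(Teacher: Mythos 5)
Your proof is correct and follows essentially the same route as the paper: the forward direction via $[R_\vartheta]_{ij}=[R_{\vartheta'}]_{ij}$ and equation \eqref{got-omega} applied componentwise, and the reverse direction by showing $[\widehat{R}_\vartheta]_{ij}=[\widehat{R}_{\vartheta'}]_{ij}$ and then recovering the semigroup component from its resolvent. The only (cosmetic) difference is that the paper makes your final ``weak-$*$ Hille--Yosida'' step rigorous by passing to the predual: it restricts $\widehat{\vartheta}_t$ and $\widehat{\vartheta}'_t$ to the closed invariant subspace $\mathfrak{M}=\{(\rho_{ij})^{ij}:\rho\in B(H)_*\}$ of $B(H)_*$, where they are genuine C$_0$-semigroups determined by their (coinciding) resolvents.
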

\begin{proof}
Suppose that for some $i$ and $j$ we have $[\vartheta_t]_{ij}=[\vartheta'_t]_{ij}$ for all
$t \geq 0$.  It clearly follows that $[R_\vartheta]_{ij} = [R_{\vartheta'}]_{ij}$.
Let $\rho \in B(K)_*$ be arbitrary and let $\eta \in B(H)_*$ be such that $\rho = \widehat{\Lambda}\eta$. It follows from equation \eqref{gotten} that for all $x>0$ and $T \in S^{(i)}_x {S^{(i)}_x}^* B(H) S^{(j)}_x {S^{(j)}_x}^*$,
\begin{align*}
\left[\omega(\rho)\right]_{ij}(T)
& = \lim_{y \to x+} \frac{1}{y-x}
(\widehat{R}_\vartheta - \widehat{\Gamma})([\eta_{ij}]^{ij}) (T^{ij} - e^{x-y} S_{y-x}T^{ij}S_{y-x}^*) \\
& =\lim_{y \to x+} \frac{1}{y-x}\;
\eta_{ij} \Big( (R_{\vartheta} - \Gamma)_{ij} (T^{ij} - e^{x-y} S_{y-x}T^{ij}S_{y-x}^*)_{ij} \Big) \\
& =\lim_{y \to x+} \frac{1}{y-x}\;
\eta_{ij} \Big( (R_{\vartheta'} - \Gamma)_{ij} (T^{ij} - e^{x-y} S_{y-x}T^{ij}S_{y-x}^*)_{ij} \Big) \\
& =\lim_{y \to x+} \frac{1}{y-x}\;
(\widehat{R}_{\vartheta'} - \widehat{\Gamma})([\eta_{ij}]^{ij}) (T^{ij} - e^{x-y} S_{y-x}T^{ij}S_{y-x}^*) \\
& = \left[\omega'(\rho)\right]_{ij}(T),
\end{align*}
hence $\left[\omega(\rho)\right]_{ij}=\left[\omega'(\rho)\right]_{ij}$ by Proposition~\ref{get-omega}.

It follows by a simple computation that  for all $t>0$, $\left[\omega_t(\rho)\right]_{ij}=\left[\omega_t'(\rho)\right]_{ij}$. Now observe that for all $t>0$, $\widehat{\pi}_t =  \omega_t(I + \hat{\Lambda}\omega_t)^{-1}$, hence for all $\rho \in B(K)_*$, $[\widehat{\pi}_t(\rho)]_{ij}=[\widehat{\pi}'_t(\rho)]_{ij}$. Thus for all $t>0$, $\rho \in B(K)_*$ and $A \in B(H)$,
\begin{align*}
\rho\Big( ([\pi_t(A)]_{ij})^{ij} \Big) & = (\rho_{ij})^{ij}(\pi_t(A)) = \widehat{\pi}_t((\rho_{ij})^{ij})(A) \\
& = \widehat{\pi}'_t((\rho_{ij})^{ij})(A) = \rho\Big( ([\pi'_t(A)]_{ij})^{ij}\Big).
\end{align*}
Hence  $([\pi_t(A)]_{ij})^{ij}= ([\pi'_t(A)]_{ij})^{ij}$, from which the desired identity follows.

Conversely, suppose that for some $i,j \in \{1, \ldots, n\}$, we have $[\omega(\rho)]_{ij} = [\omega'(\rho)]_{ij}$
for all $\rho \in B(K)_*$.  The argument from the previous paragraph shows that $[\pi_t(A)]_{ij} =  [\pi'_t(A)]_{ij}$
for all $A \in B(H)$ and $t >0$.   Since $\vartheta_t$ and $\vartheta_t'$ are generalized Schur maps,
so are $R_\vartheta$ and $R_{\vartheta'}$.  Observe that for all $\eta \in B(H)_*$,
\begin{align*}
[\hat{R}_\vartheta (\eta)]_{ij}
& = \hat{\Gamma}_{ij}([\omega(\hat{\Lambda}\eta)]_{ij} + \eta_{ij})
 =  \hat{\Gamma}_{ij}([\omega'(\hat{\Lambda}\eta)]_{ij} + \eta_{ij})
= [\hat{R}_{\vartheta'} (\eta)]_{ij}.
\end{align*}
Therefore, it follows that $[\widehat{R}_\vartheta]_{ij} = [\widehat{R}_{\vartheta'}]_{ij}$.

Now define the continuous idempotent $L: B(H)_* \to B(H)_*$ given by
$$
L(\rho) = (\rho_{ij})^{ij}
$$
and let $\fm$ be the range of $L$. Observe that for every $t>0$, $\widehat{\vartheta}_t$, $\widehat{\vartheta}'_t$ as well as $\widehat{R}_\vartheta$ and $\widehat{R}_{\vartheta'}$ are generalized Schur maps, hence $\fm$ is a closed invariant subspace of $B(H)_*$ for those maps. Furthermore, on $\fm$, the restriction of $\widehat{\vartheta}$ and $\widehat{\vartheta}'$ constitute C$_0$-semigroups whose resolvents are precisely the restriction of $\widehat{R}_\vartheta$ and $\widehat{R}_{\vartheta'}$ to $\fm$. In addition, note that for all $\rho \in B(H)_*$,
\begin{align*}
\widehat{R}_\vartheta(L(\rho)) & = \widehat{R}_\vartheta((\rho_{ij})^{ij})
= [\widehat{R}_\vartheta]_{ij}(\rho_{ij})
= [\widehat{R}_{\vartheta'}]_{ij}(\rho_{ij})
= \widehat{R}_{\vartheta'}((\rho_{ij})^{ij}) = \widehat{R}_{\vartheta'}(L(\rho)).
\end{align*}
Thus the resolvents of the C$_0$-semigroups $\widehat{\vartheta}|_\fm$ and $\widehat{\vartheta}'|_\fm$ coincide.
It follows that $\widehat{\vartheta}|_\fm=\widehat{\vartheta}'|_\fm$. Thus for every $t>0$, $X \in B(H)$, and $\rho \in B(H)_*$,
\begin{align*}
\widehat{\vartheta}_t(L(\rho))(X) & = L(\rho)(\vartheta_t(X))
= [\rho_{ij}]^{ij}(\vartheta_t(X)) = \rho_{ij}([\vartheta_t(X)]_{ij})
= \rho\left(([\vartheta_t(X)]_{ij})^{ij}\right).
\end{align*}
We have the analogous identity for $\vartheta'$, hence we have that for all $X\in B(H)$ and $\rho \in B(H)_*$,
$$
\rho\left(([\vartheta_t(X)]_{ij})^{ij}\right) = \rho\left(([\vartheta'_t(X)]_{ij})^{ij}\right).
$$
It follows that $[\vartheta_t(X)]_{ij}=[\vartheta'_t(X)]_{ij}$ for all $X\in B(H)$. Since both $\vartheta_t$ and $\vartheta'_t$ are generalized Schur maps, we obtain that $[\vartheta_t]_{ij} = [\vartheta'_t]_{ij}$.
\end{proof}

\begin{remark} \label{summary} Propositions \ref{gen-schur-flow-weight}
and \ref{corner-computing} are used very frequently in the remainder of the article.
For example, if $\omega: B(K_1 \oplus K_2)_* \to \fa(H_1 \oplus H_2)_*$ is a $q$-positive boundary weight map
which is a generalized Schur map with respect to the decompositions $K_1 \oplus K_2$ and $H_1 \oplus H_2$,
then Proposition \ref{gen-schur-flow-weight} implies that $\omega$ induces a CP-flow $\Theta$ over $K_1
\oplus K_2$ consisting of generalized Schur maps and that the generalized boundary
representation $\Pi_t$ for $\Theta$ consists of generalized Schur maps.  Therefore, $\Theta$ has the form
\begin{equation} \label{moretheta}
\Theta_t \begin{pmatrix} A & B \\ C & D \end{pmatrix} = \begin{pmatrix} \alpha_t(A) & \sigma_t(B)
\\ \sigma^*_t(C) &  \beta_t(D) \end{pmatrix},
\end{equation}
where $\alpha_t$ maps $B(H_1)$ into itself, $\sigma_t$ maps $B(H_2, H_1)$ into itself, and $\beta_t$ maps $B(H_2)$ into itself for all $t \geq 0$.  Note that since $\Theta$ is a CP-flow, the semigroups
$\alpha=\{\alpha_t\}_{t \geq 0}$ and
$\beta= \{\beta_t\}_{t \geq 0}$ must be CP-flows over $K_1$ and $K_2$, respectively, hence
$\sigma$ is a flow corner from $\alpha$ to $\beta$.  Let $\mu$ be the boundary
weight map and $\pi_t$ be the generalized boundary representation for $\alpha$, and let
$\eta$ be the boundary weight map and $\xi_t$ be the generalized boundary representation for $\beta$.  Since $(R_\Theta)_{11}= R_\alpha$
and $(R_\Theta)_{22} = R_\beta$, it follows from \eqref{gotten} that for all $\rho \in B(K_1 \oplus K_2)_*$,
$$[\omega(\rho)]_{11} = \mu(\rho_{11}), \qquad [\omega(\rho)]_{22} = \eta(\rho_{22}).$$
By the above line and the fact that $\Pi_t$ is a generalized Schur map for every $t>0$,
it follows that $\Pi_t$ has the form
\begin{equation} \label{morepi}
\Pi_t \begin{pmatrix} A & B \\ C & D \end{pmatrix} = \begin{pmatrix} \pi_t(A) & \gamma_t(B) \\ \gamma^*_t(C) &  \xi_t(D) \end{pmatrix}
\end{equation}
for some family $\gamma= \{\gamma_t\}_{t > 0}$ of contractions from $B(H_2, H_1)$ into $B(K_2, K_1)$.

Suppose that $\Theta \geq \Theta'$ for a CP-flow $\Theta'$ over $K_1 \oplus K_2$ of the form
\begin{equation*}
\Theta_t' \begin{pmatrix} A & B \\ C & D \end{pmatrix} = \begin{pmatrix} \alpha_t'(A) & \sigma_t(B)
\\ \sigma^*_t(C) &  \beta_t'(D) \end{pmatrix}.
\end{equation*}
Note that $\alpha'$ and $\beta'$ are CP-flows over $K_1$
and $K_2$ which are subordinate to $\alpha$ and $\beta$, respectively. By the previous paragraph, we have that if $\Pi'_t$ is the generalized boundary representation for $\Theta'$, then
\begin{equation*}
\Pi_t' \begin{pmatrix} A & B \\ C & D \end{pmatrix} =
\begin{pmatrix} \pi_t'(A) & \gamma'_t(B) \\ (\gamma')_t^*(C) &  \xi_t'(D) \end{pmatrix}.
\end{equation*}
Furthermore, since $[\Theta_t]_{12} = [\Theta'_t]_{12}$
for all $t \geq 0$, Proposition \ref{corner-computing} implies that $[\Pi_t]_{12}(B)=[\Pi_t(B^{12})]_{12} = [\Pi_t'(B^{12})]_{12}=[\Pi_t']_{12}(B)$
for all $B \in B(H_2, H_1)$, hence $\Pi_t'$ has the form
\begin{equation*}
\Pi_t' \begin{pmatrix} A & B \\ C & D \end{pmatrix} =
\begin{pmatrix} \pi_t'(A) & \gamma_t(B) \\ \gamma_t^*(C) &  \xi_t'(D) \end{pmatrix}
\end{equation*}
for all $t \geq 0$.

Conversely, if $\alpha$ and $\beta$ are CP-flows over $K_1$ and $K_2$, respectively,
and if $\sigma$ is a flow corner from $\alpha$ to $\beta$,
then equation \eqref{moretheta} defines a CP-flow $\Theta$ over $K_1 \oplus K_2$.  By Proposition \ref{gen-schur-flow-weight}, the boundary
weight map $\omega$ and generalized boundary representation $\Pi_t$ for $\Theta$ are generalized Schur maps.
Let $\mu$ be the boundary
weight map and $\pi_t$ be the generalized boundary representation for $\alpha$, and let
$\eta$ be the boundary weight map and $\xi_t$ be the generalized boundary representation for $\beta$.  The same argument
given in the first paragraph of this remark shows that for all $\rho \in B(K_1 \oplus K_2)_*$
we have $[\omega(\rho)]_{11} = \mu(\rho_{11})$ and
$[\omega(\rho)]_{22} = \eta(\rho_{22})$, and that in addition $\Pi_t$ has the form \eqref{morepi}.
\end{remark}

As a further application of generalized Schur maps, we clarify the relationship between flow corners and matrices of flow corners.

\begin{prop}\label{submatrix-flow-corners}
For each $i=1,2, \dots n$, let $K_i$ be a separable Hilbert space, and
 let $H_i = K_i \otimes L^2(0,\infty)$. Let $K=\bigoplus_{i=1}^n K_i$ and $H=\bigoplus_{i=1}^n H_i$. Suppose that $\Theta=(\theta^{(ij)})_{i,j=1}^n$ is a one-parameter semigroup from $B(H)$ to $B(H)$ such that for each $i,j=1,2,\dots, n$, $i\neq j$, and $t>0$,  the map given by
$$
\Psi^{(ij)}_t\begin{pmatrix} A & B \\ C & D \end{pmatrix} = \begin{pmatrix} \theta^{(ii)}_t(A) & \theta^{(ij)}_t(B) \\ \theta^{(ji)}_t(C) &  \theta^{(jj)}_t(D) \end{pmatrix}
$$
defines a unital CP-flow over $K_i \oplus K_j$.

$\Theta$ is a CP-flow over $K$ if only if the boundary weight map $\omega$ defined by
\begin{equation}\label{itisschur}
[\omega(\rho)]_{ij} = \omega_{ij}(\rho_{ij})
\end{equation}
for all $\rho \in B(K)_*$  and $i,j=1,2,\dots n$ is a $q$-positive boundary weight map, where the map given by
\begin{equation}\label{thebdmap}
\eta \mapsto \begin{pmatrix} \omega_{ii}(\eta_{11}) & \omega_{ij}(\eta_{12}) \\
\omega_{ji}(\eta_{21}) &  \omega_{jj}(\eta_{22}) \end{pmatrix}
\end{equation}
for all $\eta \in B(K_i \oplus K_j)_*$ is the boundary weight map for $\Psi^{(ij)}$. In this case, $\omega$ is the boundary weight map for $\Theta$.
\end{prop}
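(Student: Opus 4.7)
My plan is to prove both directions by a uniqueness argument: a CP-flow is uniquely determined by its boundary weight map, and the data of the $\Psi^{(ij)}$'s already fixes the only boundary weight map that a CP-flow of the form $\Theta = (\theta^{(ij)})$ could have.

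For the forward direction, suppose $\Theta$ is a CP-flow over $K$. Since $\Theta_t = (\theta^{(ij)}_t)$ is by construction a generalized Schur map with respect to $\bigoplus_i H_i$, Proposition \ref{gen-schur-flow-weight} gives that its boundary weight map $\omega^{\Theta}$ is a generalized Schur map with respect to $\bigoplus_i K_i$ and $\bigoplus_i H_i$, so $[\omega^{\Theta}(\rho)]_{ij}$ depends only on $\rho_{ij}$. To identify the components, I would fix $i \neq j$ and consider the compression of $\Theta$ to the $(i,j)$ block: because $\Theta_t$ is generalized Schur and the shift decomposes as $S_t = \bigoplus_k S_t^{(k)}$, this compression coincides with $\Psi^{(ij)}$ and is in particular a CP-flow over $K_i \oplus K_j$. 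Then, exactly as in the computation \eqref{gotten} of Proposition \ref{gen-schur-flow-weight}, I would verify that its boundary weight map is the $2 \times 2$ submatrix of $\omega^{\Theta}$ at indices $(i,j)$. By hypothesis this must equal the boundary weight map of $\Psi^{(ij)}$, so the components of $\omega^{\Theta}$ are precisely the $\omega_{kl}$ of the proposition. Hence $\omega^{\Theta} = \omega$ in the sense of \eqref{itisschur}, and $\omega$ is $q$-positive by virtue of being the boundary weight map of the CP-flow $\Theta$ (Theorem \ref{powerstheorem}).

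For the reverse direction, assume $\omega$ defined by \eqref{itisschur} is a $q$-positive boundary weight map. By Theorem \ref{powerstheorem}, $\omega$ is the boundary weight map of a (unique) CP-flow $\widetilde{\Theta}$ over $K$. Since $\omega$ is by construction a generalized Schur map with respect to $\bigoplus_i K_i$ and $\bigoplus_i H_i$, Proposition \ref{gen-schur-flow-weight} gives that $\widetilde{\Theta}_t = (\widetilde{\theta}^{(ij)}_t)$ is a generalized Schur map as well. For each pair $i \neq j$, compressing $\widetilde{\Theta}$ to the $2 \times 2$ block at indices $(i,j)$ yields a CP-flow $\widetilde{\Psi}^{(ij)}$ over $K_i \oplus K_j$ (complete positivity passes to compressions along the canonical isometry $V\colon H_i \oplus H_j \to H$, and the intertwining with $S_t^{(i)} \oplus S_t^{(j)}$ is automatic). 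By the same computation used in the forward direction, the boundary weight map of $\widetilde{\Psi}^{(ij)}$ is exactly the map \eqref{thebdmap}, which is the boundary weight map of $\Psi^{(ij)}$. Uniqueness of a CP-flow associated to a given boundary weight map (a consequence of Theorem \ref{boundary-representation-subordinates} together with Theorem \ref{powerstheorem}) forces $\widetilde{\Psi}^{(ij)} = \Psi^{(ij)}$, hence $\widetilde{\theta}^{(kl)} = \theta^{(kl)}$ for all $k, l \in \{i,j\}$. Running over all pairs we conclude $\widetilde{\Theta} = \Theta$, so $\Theta$ is a CP-flow, and necessarily $\omega$ is its boundary weight map.

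The principal obstacle I expect is the bookkeeping step of verifying that the $2 \times 2$ compression of a generalized Schur CP-flow is itself a CP-flow whose boundary weight map is exactly the corresponding $2 \times 2$ submatrix of the ambient boundary weight map. This requires invoking the explicit limit formula \eqref{gotten} to extract the $(i,j)$-component of $\omega$ from the resolvent of $\Theta$ restricted to block-supported operators, and checking that this computation agrees, on the nose, with the one performed for $\Psi^{(ij)}$ alone. Once this identification is in hand, both implications follow by the uniqueness of the CP-flow determined by a boundary weight map.
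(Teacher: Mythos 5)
Your proposal is correct and follows essentially the same route as the paper's proof: both directions reduce to identifying the boundary weight map of the $2\times 2$ compression $E\circ\Theta_t\circ\varepsilon$ with the corresponding submatrix of $\omega$, using Proposition \ref{gen-schur-flow-weight} for the generalized Schur structure and the uniqueness of the boundary weight map (resp.\ of the CP-flow it induces) to close both implications. The only cosmetic difference is that the paper extracts the compressed boundary weight map via the resolvent identity $\widehat{R}_\Psi = \widehat{\varep}\circ\widehat{R}_\Theta\circ\widehat{E}$ and equation \eqref{resolvent}, whereas you propose the equivalent limit formula \eqref{gotten}.
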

\begin{proof}
Suppose that $\Theta$ is a CP-flow over $K$. Then it has a $q$-positive boundary weight map $\omega$. It is clear that $\Theta$ is a generalized Schur map, hence $\omega$ is also a generalized Schur map, i.e. it has the form \eqref{itisschur}. It remains to show that for every $i,j=1,2, \dots, n$, $i\neq j$, the boundary weight map of $\Psi^{(ij)}_t$ is given by \eqref{thebdmap}.

Let us fix $i,j=1,2, \dots, n$,  $i\neq j$, let us temporarily denote $\Psi= \Psi^{(ij)}$ and let us define the canonical embedding $\varepsilon: B(H_i \oplus H_j) \to B(H)$ and the canonical compression $E:B(H) \to B(H_i \oplus H_j)$. Then it is clear that for all $t>0$,
$$
\Psi_t = E \circ \Theta_t \circ \varepsilon.
$$
Similarly, we have that
$$
R_\Psi = E \circ R_\Theta \circ \varepsilon, \quad \text{and} \quad \Gamma_{H_i\oplus H_j} = E \circ \Gamma \circ \varepsilon.
$$
Let $\omega'$ be the map given by \eqref{thebdmap}. Then by \eqref{resolvent} we have that for all $\rho \in B(H_i \oplus H_j)_*$,
\begin{align}\nonumber
\widehat{R}_\Psi(\rho) & = (\widehat{\varep} \circ \widehat{R_\Theta} \circ \widehat{E})(\rho)
= \widehat{\varep} \Big( \widehat{\Gamma}( \omega(\widehat{\Lambda}\widehat{E}\rho) + \widehat{E}(\rho) \Big) \\
\nonumber
& =  \widehat{\Gamma}_{H_i\oplus H_j}\Big(  \widehat{\varep}\omega(\widehat{\Lambda}\widehat{E}\rho) + \rho \Big) \\
\label{merry-go-round}
& =  \widehat{\Gamma}_{H_i\oplus H_j}\Big( \omega'(\widehat{\Lambda}_{H_i\oplus H_j}\rho) + \rho \Big).
\end{align}
Thus it follows from \eqref{resolvent} that $\omega'$ must be the boundary weight map of $\Psi$.

Conversely, suppose that equation \eqref{itisschur} defines a $q$-positive boundary weight $\omega$, where for each $i$ and $j$ with $i \neq j$, \eqref{thebdmap} is the boundary weight map for $\Psi^{(ij)}$. Then $\omega$ induces a CP-flow $\Theta'$ over $K$, and $\Theta_t'$ must be a generalized Schur map for every $t>0$. Now let $i,j=1,2,\dots, n$ be fixed, and let us denote $E$ and $\varep$ as in the previous paragraph. Then we have that $\Upsilon^{(ij)}_t=E \circ \Theta_t' \circ \varepsilon$ is a CP-flow over $K_i\oplus K_j$. This applies for every $i,j$.  Hence we can apply the forward part of the theorem to $\Theta'$ and $\Upsilon^{(ij)}$ for $i,j=1,2,\dots,n$, so that by \eqref{merry-go-round}, $\Upsilon^{(ij)}$ has boundary weight map given by \eqref{thebdmap}. It follows by the uniqueness of the boundary weight map (see \eqref{resolvent} and Proposition~\ref{get-omega}) that $\Upsilon^{(ij)}= \Psi^{(ij)}$. Thus we obtain that $\Theta = \Theta'$ and $\omega$ is its boundary weight map.
\end{proof}

\section{Cocycle conjugacy in the case of invertible $q$-pure maps}

In this section we will frequently make use of the canonical identifications
discussed after Proposition \ref{bdryweight}.
We remark that if $T \in M_n(\cc)$ is positive, and $\mu \in \mathfrak{A}(\cc^n
\otimes L^2(0,\infty))_*$ is a positive boundary weight, then the map $\mu_T(A) = \mu(T \otimes A)$
is a positive boundary weight in $\mathfrak{A}(L^2(0,\infty))_*$.  Indeed, $\mu_T$ is positive by
construction, and it is a boundary weight since
$$
\mu_T((I - \Lambda)^{1/2} B (I - \Lambda)^{1/2}) = \mu\Big( (I - I \otimes \Lambda)^{1/2} (T \otimes
B)(I - I \otimes \Lambda)^{1/2}\Big)
$$
for all $B \in B(L^2(0, \infty))$.  Under our matrix identifications, $\mu_T(A)=\mu(\sum_{i,j=1}^n t_{ij}A^{ij})$.

\begin{lem} \label{diags} Let $\phi: M_n(\C) \rightarrow M_n(\C)$ be a
unital $q$-positive Schur map.
Let $\nu$ be a type II Powers weight, and let
$\Theta$ be the unital CP-flow over $\C^n$ induced by the boundary weight
double $(\phi, \nu)$. Let $H=\C^n \otimes L^2(0, \infty)$,
identifying $B(H)$ with $M_n(B(L^2(0, \infty)))$.
Suppose $\Theta'$ is a CP-flow over $\C^n$ such that $\Theta \geq \Theta'$, and let
$\{\xi_t\}_{t>0}$ be the generalized boundary representation for $\Theta'$. Then
$\xi_t$ is a generalized Schur map with respect to the decompositions
$\bigoplus_{i=1}^n \C$ and $\bigoplus_{i=1}^n B(L^2(0, \infty))$ for every $t>0$.

Furthermore, for each
$k=1,2,\dots, n$, there exists a positive boundary weight
$\omega_{k} \in \mathfrak{A}(L^2(0, \infty))_*$ such that
\begin{equation} \label{pain} \xi_t(X^{kk})
= \frac{(\omega_k)_t(X)}{1 + (\omega_k)_t(\Lambda)} e_{kk}
\end{equation}
for all $X \in B(L^2(0, \infty))$,
where the matrices $\{e_{ij}\}_{i,j=1}^n$ are the standard
matrix units for $M_n(\C)$.
\end{lem}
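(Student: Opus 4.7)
The plan is to transfer the generalized Schur structure of $\Theta$ to $\xi_t$ and then read off the explicit formula.  Since $\phi$ is a Schur map on $M_n(\cc)$, it is a generalized Schur map with respect to the decomposition $\cc^n=\bigoplus_{i=1}^n\cc$.  A short computation using the formula for $\omega$ from Proposition~\ref{bdryweight} shows that $\omega$ is itself a generalized Schur map with respect to $\bigoplus_{i=1}^n\cc$ and $\bigoplus_{i=1}^n B(L^2(0,\infty))$, and Proposition~\ref{gen-schur-flow-weight} then propagates this property to $\omega_t$, $\pi_t$, and $\Theta_t$ for every $t>0$.

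To transfer the Schur structure to $\xi_t$, let $P_k=(I_{L^2})^{kk}\in B(H)$ be the projection onto the $k$-th copy of $L^2(0,\infty)$ in $H$.  Since $\pi_t$ is a generalized Schur map, $\pi_t(P_k)$ is supported on the $(k,k)$-entry and is therefore a non-negative scalar multiple of $e_{kk}$.  By Theorem~\ref{boundary-representation-subordinates}, the hypothesis $\Theta\geq\Theta'$ is equivalent to $\pi_t-\xi_t$ being completely positive for all $t>0$, so $0\leq\xi_t(P_k)\leq\pi_t(P_k)$ forces $e_{ii}\,\xi_t(P_k)\,e_{ii}=0$ whenever $i\neq k$.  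Because $\{P_k\}_{k=1}^n$ and $\{e_{kk}\}_{k=1}^n$ are mutually orthogonal resolutions of $I_H$ and $I_{\cc^n}$ respectively, Lemma~\ref{matrixzeroes} applied to the completely positive contraction $\xi_t$ yields $\xi_t(P_iAP_j)=e_{ii}\,\xi_t(A)\,e_{jj}$ for all $A\in B(H)$, which unwinds under the $X^{ij}$ identification to the generalized Schur property for $\xi_t$.  This is the principal obstacle of the argument.

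For formula~\eqref{pain}, the generalized Schur property immediately gives $\xi_t(X^{kk})=(\xi_t)_{kk}(X)\,e_{kk}$, so it remains to identify $(\xi_t)_{kk}(X)$.  By the converse direction of Proposition~\ref{gen-schur-flow-weight}, the boundary weight map $\omega'$ of $\Theta'$ is itself a generalized Schur map, so I set $\omega_k:=(\omega')_{kk}(1)\in\fa(L^2(0,\infty))_*$; it is a positive boundary weight because $\omega'$ is completely positive.  The Schur structure propagates through the identity $\widehat{\xi}_t=\omega'_t(I+\widehat{\Lambda}\omega'_t)^{-1}$ from Theorem~\ref{powerstheorem}: because $B(\cc)_*=\cc$, the $(k,k)$-component of $\widehat{\Lambda}\omega'_t$ is multiplication by $(\omega_k)_t(\Lambda)$, so the $(k,k)$-component of the inverse is multiplication by $1/(1+(\omega_k)_t(\Lambda))$, and we conclude that
\[
(\xi_t)_{kk}(X)=\frac{(\omega_k)_t(X)}{1+(\omega_k)_t(\Lambda)},
\]
which is exactly the asserted formula.
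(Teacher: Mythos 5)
Your argument is correct and follows essentially the same route as the paper: domination of the generalized boundary representations together with Lemma~\ref{matrixzeroes} yields the generalized Schur structure of $\xi_t$, and the formula \eqref{pain} is read off from $\widehat{\xi}_t = \omega'_t(I+\widehat{\Lambda}\omega'_t)^{-1}$ with $\omega_k=(\omega')_{kk}(1)$, which coincides with the paper's weight $\eta(\rho_0)((\cdot)^{kk})$. The only point to tighten is your appeal to ``the converse direction of Proposition~\ref{gen-schur-flow-weight}'' to get the Schur property of $\omega'$ --- neither direction of that equivalence takes as hypothesis what you have actually established (namely that $\xi_t$, rather than $\Theta'_t$ or $\omega'$, is a generalized Schur map) --- but the needed fact follows in one line from $\omega'_t=\widehat{\xi}_t(I-\widehat{\Lambda}\widehat{\xi}_t)^{-1}$, which is precisely how the paper proceeds via the map $\Psi_t=(I-\xi_t\circ\Lambda)^{-1}\xi_t$.
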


\begin{proof}
Let $\pi=\{\pi_t\}_{t>0}$ be the boundary representation for $\Theta$,
and let $t>0$.  By Proposition \ref{bdryweight}, $\pi_t$ is given by
\begin{equation*}\label{pi}
\pi_t(B) = \phi(I + \nu_t(\Lambda) \phi)^{-1}(\Omega_{\nu_t}(B))
\end{equation*}
for all $B \in B(\cc^n \otimes L^2(0,\infty))$, and by
Theorem~\ref{boundary-representation-subordinates}, $\pi_t - \xi_t$ is completely positive.
Using the fact that $\phi$ is a unital
Schur map and  $\nu_t(I - \Lambda) \leq \nu(I-\Lambda) = 1$, we obtain
$$
\xi_t(I^{kk}) \leq \pi_t(I^{kk}) = \frac{\nu_t(I)}{1+ \nu_t(\Lambda)} e_{kk} \leq e_{kk}
$$
for every $k=1, \ldots, n$.  Therefore, $e_{jj} \xi_t(I^{kk}) e_{jj}=0$
if $j \neq k$. Fix $A \in B(H)$.
Note that $\xi_t$, $\{e_{jj}\}_{j=1}^n$, and $\{I^{kk}\}_{k=1}^n$ satisfy the conditions of
Lemma~\ref{matrixzeroes}, hence $e_{jj} \xi_t(A)e_{kk} = e_{jj} \xi_t(I^{jj} A I^{kk})e_{kk}$ for all
$j,k=1, \ldots, n$.  Therefore, for all $i,j=1, \ldots, n$, we have
$$\Big([\xi_t(A)]_{ij}\Big)^{ij} = e_{ii} \xi_t(A) e_{jj} = e_{ii} \xi_t(I^{ii} A I^{jj})e_{jj}
= e_{ii}\Big(\xi_t[(A_{ij})^{ij}]\Big) e_{jj}
= \Big[(\xi_t)_{ij}(A_{ij})]_{ij}\Big]^{ij},$$
hence $[\xi_t(A)]_{ij} = (\xi_t)_{ij}(A_{ij})$.  Therefore, $\xi_t$ is
a generalized Schur map with respect to the decompositions
$\bigoplus_{i=1}^n \C$ and $\bigoplus_{i=1}^n B(L^2(0, \infty))$ for every $t>0$.

Now let $k \in \{1, \ldots, n\}$ be arbitrary.  For simplicity of notation,
we will write $\omega$ rather than $\omega_k$ for
the boundary weight which, as we will show, satisfies equation \eqref{pain}.
Let $\rho_0 \in M_n(\C)^*$ be the state defined by $\rho_0(X)=x_{kk}$ for all $X=(x_{ij}) \in
M_n(\C)$.
Let $\eta$ be the boundary weight map for to $\Theta'$,
and for each $t>0$, let
$\Psi_t = (I - \xi_t \circ \Lambda)^{-1} \xi_t$, noting that
$\Psi_t$ is a generalized Schur map since $\xi_t$ and $I-\xi_t\circ\Lambda$ are generalized Schur maps. Observe that $\rho \circ \Psi_t = \eta_t(\rho)$
for all $\rho \in M_n(\C)^*$.
Since $\Psi_t$ is a generalized Schur map with respect to the decompositions
$\bigoplus_{i=1}^n \C$ and $\bigoplus_{i=1}^n B(L^2(0, \infty))$ for every $t>0$,
we have
\begin{equation}\label{N}
\Psi_t(X^{kk}) = \rho_0\Big(\Psi_t(X^{kk})\Big)e_{kk}  = \eta_t(\rho_0)(X^{kk})
e_{kk}
\end{equation}
for all $X \in B(L^2(0, \infty))$.

We define the linear functional $\omega$ acting on  $\mathfrak{A}(L^2(0, \infty))$ by
$$\omega(X) = \eta(\rho_0)(X^{kk})$$ for all $X \in \mathfrak{A}(L^2(0, \infty))$. It
follows from the discussion preceding the current Lemma that $\omega$ is indeed a well-defined
positive boundary weight in $\mathfrak{A}(L^2(0, \infty))_*$.

Denote the right shift semigroups acting on $B(H)$ and $B(L^2(0, \infty))$
by
$\{S_t\}_{t \geq 0}$ and $\{V_t\}_{t \geq 0}$, respectively.
Note that under our identification of $B(H)$ with $M_n(B(L^2(0, \infty)))$,  we have
$[S_t]_{kk}=V_t$ for every $t \geq 0$.
Define the truncated weights $\omega_t \in B(L^2(0, \infty))_*$ by
$$\omega_t(X) = \omega(V_tV_t^* X V_tV_t^*)$$ for $t>0$ and $X \in B(L^2(0, \infty))$. Then we
have that
\begin{align*}
\omega_t(X) & = \omega(V_t V_t^* X V_tV_t^*) =
\eta(\rho_0) \Big([V_tV_t^*XV_tV_t^*]^{kk}) \Big)
=  \eta(\rho_0)\Big(S_tS_t^* X^{kk} S_t S_t^*\Big) \\
& = \eta_t(\rho_0)(X^{kk}),
\end{align*}
whereby equation~\eqref{N} implies
$\Psi_t(X^{kk})= \omega_t(X)e_{kk}$ for all $X \in B(L^2(0, \infty))$.  Thus we have
$(I + \Psi_t \circ \Lambda)^{-1}(e_{kk}) = \dfrac{1}{1+ \omega_t(\Lambda)}e_{kk}.$  Therefore,
for all $X \in B(L^2(0, \infty))$,
\begin{eqnarray*}  \xi_t(X^{kk}) & = &  (I + \Psi_t \circ
\Lambda)^{-1}\Psi_t\Big(X^{kk}\Big)
= (I + \Psi_t \circ \Lambda)^{-1}\Big(\omega_t(X) e_{kk}\Big)
\\  & = &
\frac{\omega_t(X)}{1+\omega_t(\Lambda)} e_{kk}. \end{eqnarray*}
\end{proof}

\begin{thm}\label{invertiblecocycleconjugacy}
Let $\psi: M_n(\C) \rightarrow M_n(\C)$ be a unital invertible
$q$-pure map, and let $\nu$ be a type II Powers weight.  Then $(\psi, \nu)$
and $(\imath_\C, \nu)$ induce cocycle conjugate minimal dilation $E_0$-semigroups.
\end{thm}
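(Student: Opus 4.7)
The plan is to build an explicit hyper-maximal flow corner from the CP-flow induced by $(\psi,\nu)$ to the one induced by $(\imath_\C,\nu)$, and then invoke Theorem~\ref{thm-corners} to conclude cocycle conjugacy of the minimal flow dilations. Before constructing the corner, I would reduce to the case where $\psi$ is in the canonical Schur form of Theorem~\ref{inverts}: $\psi$ is conjugate to a Schur map $\phi$ with real parameters $\lambda_1,\ldots,\lambda_n$ summing to zero. The generalization of Proposition~\ref{arrgh} announced in the introduction (Theorem~\ref{conjugation-thm} together with Corollary~\ref{uniflow}) asserts that $(\psi,\nu)$ and $(\phi,\nu)$ induce \emph{conjugate} $E_0$-semigroups for arbitrary type II Powers $\nu$, so it suffices to treat $\phi$.

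Next I would extend $\phi$ to a Schur map $\tilde\phi:M_{n+1}(\C)\to M_{n+1}(\C)$ using eigenvalues $\lambda_1,\ldots,\lambda_n,0$. Because $\sum_{j=1}^n\lambda_j=0$ already, the sum-zero normalization is preserved, so by Theorem~\ref{inverts} $\tilde\phi$ is again a unital invertible $q$-pure Schur map. By Proposition~\ref{bdryweight} the pair $(\tilde\phi,\nu)$ induces a unital CP-flow $\Theta$ over $\C^{n+1}\cong\C^n\oplus\C$. Since $\tilde\phi$ is Schur, the induced boundary weight map is a generalized Schur map with respect to the decomposition $\C^n\oplus\C$, so by Proposition~\ref{gen-schur-flow-weight} each $\Theta_t$ is itself a generalized Schur map, and Remark~\ref{summary} yields a block decomposition
$$
\Theta_t\begin{pmatrix}A & B\\ C & D\end{pmatrix}=\begin{pmatrix}\alpha_t(A) & \sigma_t(B)\\ \sigma^*_t(C) & \beta_t(D)\end{pmatrix},
$$
where $\alpha$ is the CP-flow induced by $(\phi,\nu)$, $\beta$ is the CP-flow induced by $(\imath_\C,\nu)$, and $\sigma$ is a flow corner from $\alpha$ to $\beta$.

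Finally I would verify that $\sigma$ is hyper-maximal. Suppose $\Theta'\leq\Theta$ is a CP-flow over $\C^{n+1}$ whose off-diagonal equals $\sigma_t$. Remark~\ref{summary} shows that the generalized boundary representation $\Pi'_t$ of $\Theta'$ inherits the same off-diagonals as $\Pi_t$, while Lemma~\ref{diags}, applied to $(\tilde\phi,\nu)$, forces the diagonal entries of $\Pi'_t$ to have the form $\tfrac{(\omega_k)_t(X)}{1+(\omega_k)_t(\Lambda)}\,e_{kk}$ for positive boundary weights $\omega_1,\ldots,\omega_{n+1}$. For each $j\leq n$, complete positivity of the $2\times 2$ submatrix of $\Pi'_t$ supported on the indices $\{j,n+1\}$ yields a Cauchy--Schwarz-type inequality linking $(\omega_j)_t$, $(\omega_{n+1})_t$, and the prescribed off-diagonal entry $\tfrac{\tilde q_{j,n+1}\nu_t(X)}{1+\nu_t(\Lambda)\tilde q_{j,n+1}}$. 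The main obstacle will be to exploit this scalar inequality, valid uniformly in $t>0$, together with the type II asymptotics $\nu_t(\Lambda)\to\infty$ as $t\to 0^+$, to deduce $\omega_k=\nu$ for every $k$; this is where the specific arithmetic of the Schur multipliers $\tilde q_{j,n+1}=(1+i\lambda_j)^{-1}$ should be needed to saturate the Schwarz inequality. Once this is in hand, Proposition~\ref{corner-computing} and Theorem~\ref{boundary-representation-subordinates} give $\alpha'=\alpha$ and $\beta'=\beta$, so $\sigma$ is hyper-maximal, and Theorem~\ref{thm-corners} delivers the desired cocycle conjugacy of the minimal flow dilations.
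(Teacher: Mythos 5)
Your overall architecture is exactly that of the paper: reduce to the canonical Schur form $\phi$, extend to a Schur map on $M_{n+1}(\C)$ with $\lambda_{n+1}=0$, read off the flow corner $\sigma$ from the block decomposition of the CP-flow induced by $(\tilde\phi,\nu)$, and use Lemma~\ref{diags} to put the diagonal of any subordinate's boundary representation in the form $\tfrac{(\omega_k)_t(X)}{1+(\omega_k)_t(\Lambda)}e_{kk}$. However, the step you yourself flag as ``the main obstacle'' --- deducing $\omega_k=\nu$ for every $k$ --- is precisely the heart of the proof, and the route you sketch (a Cauchy--Schwarz inequality plus the asymptotics $\nu_t(\Lambda)\to\infty$) does not obviously close. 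If you combine the Schwarz inequality for the $2\times 2$ compression with the subordination $\tfrac{(\omega_k)_t}{1+(\omega_k)_t(\Lambda)}\le\tfrac{\nu_t}{1+\nu_t(\Lambda)}$, the resulting scalar inequality is automatically satisfied and yields no contradiction; extracting equality from near-saturation is essentially the content of the Alevras--Powers--Price classification of $q$-corners from $\nu$ to $\nu$, which is a substantial theorem, not a computation one redoes in passing.

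The paper closes this step without any asymptotic analysis. First, complete positivity of $\pi_t-\pi_t'$ on the diagonal blocks gives $\nu\ge_q\omega_k$ for each $k$. Then the $2\times 2$ compression to the indices $\{k,n+1\}$, together with Proposition~\ref{corner-computing}, shows that the map
$$
(A_{ij})\mapsto\begin{pmatrix}\dfrac{(\omega_k)_t(A_{11})}{1+(\omega_k)_t(\Lambda)} & \dfrac{\nu_t(A_{12})}{1+\nu_t(\Lambda)+i\lambda_k}\\ \noalign{\medskip} \dfrac{\nu_t^*(A_{21})}{1+\nu_t^*(\Lambda)-i\lambda_k} & \dfrac{(\omega_{n+1})_t(A_{22})}{1+(\omega_{n+1})_t(\Lambda)}\end{pmatrix}
$$
is completely positive for all $t>0$, i.e.\ that $\tfrac{1}{1+i\lambda_k}\nu$ is a $q$-corner from $\omega_k$ to $\omega_{n+1}$. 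Since $\re(i\lambda_k)=0$, Remark~\ref{hypnote} says $\tfrac{1}{1+i\lambda_k}\nu$ is a \emph{hyper-maximal} $q$-corner from $\nu$ to $\nu$, and the definition of hyper-maximality, applied to the $q$-subordinates $\omega_k$ and $\omega_{n+1}$ of $\nu$, forces $\omega_k=\nu=\omega_{n+1}$ at once. After that, $(\pi_t-\pi_t')(I)=0$ and complete positivity give $\pi_t=\pi_t'$, hence $\Theta=\Theta'$. You should replace your unproved saturation argument with this appeal to the $q$-subordination inequality and Remark~\ref{hypnote}; without it, the proof is incomplete.
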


\begin{proof}  By Theorem~\ref{inverts}, since
$\psi$ is a unital invertible $q$-pure map, it is conjugate to
a unital invertible $q$-pure  map $\phi$ of the form
\begin{equation*} [\phi(A)]_{jk} = \left\{
\begin{array}{cc}
\dfrac{a_{jk}}{1+i(\lambda_j - \lambda_k)} & \textrm{if } j<k \\ \noalign{\medskip}
a_{jk} & \textrm{if } j=k \\ \noalign{\medskip}
\dfrac{a_{jk}}{1-i(\lambda_j - \lambda_k)} & \textrm{if } j>k
\end{array} \right.
\end{equation*}
for all $j,k=1, \ldots, n$ and all $A=(a_{jk}) \in
M_n(\C)$, where $\lambda_1, \ldots, \lambda_n \in \R$ and $\lambda_1
+ \ldots + \lambda_n = 0$. By Proposition~\ref{arrgh}, $(\phi, \nu)$ and $(\psi, \nu)$ induce cocycle conjugate E$_0$-semigroups, therefore it suffices to show that the E$_0$-semigroups induced by $(\phi, \nu)$ and $(\imath_\C, \nu)$ are cocycle conjugate.

Define $\gamma: M_{n , 1}(\C)
\rightarrow M_{n , 1}(\C)$ by

\begin{displaymath} \gamma \left(
\begin{array}{c}
b_1 \\ b_2 \\ \vdots \\ b_n \end{array} \right) = \left(
\begin{array}{c}
\frac{1}{1+i\lambda_1 } b_1 \\
\frac{1}{1+i\lambda_2} b_2 \\
\vdots \\
\frac{1}{1+i\lambda_n} b_n \end{array} \right),
\end{displaymath}
and define a unital map $\Upsilon: M_{n+1}(\C) \rightarrow M_{n+1}(\C)$ by
\begin{equation}\label{upsilon-schur}
\Upsilon \left(
\begin{array}{ccc}
A_{n, n} & B_{n, 1} \\
C_{1, n} & d \end{array} \right) = \left(
\begin{array}{ccc}
\phi(A_{n, n}) & \gamma(B_{n, 1}) \\
\gamma^*(C_{1, n}) & d
\end{array} \right).
\end{equation}
Letting $\lambda_{n+1}=0$, we see that for all $A=(a_{ij}) \in M_{n+1}(\C)$,
\begin{equation*} [\Upsilon(A)]_{jk} = \left\{
\begin{array}{cc}
\dfrac{a_{jk}}{1+i(\lambda_j - \lambda_k)} & \textrm{if } j<k \\ \noalign{\medskip}
a_{jk} & \textrm{if } j=k \\ \noalign{\medskip}
\dfrac{a_{jk}}{1-i(\lambda_j - \lambda_k)} & \textrm{if } j>k
\end{array} \right.,
\end{equation*}
where $\lambda_1, \ldots, \lambda_{n+1} \in \R$ and $\sum_{k=1}^n \lambda_k = \sum_{k=1}^{n+1}
\lambda_k = 0$, so $\Upsilon$
is $q$-positive by Theorem~\ref{inverts}. By Proposition \ref{bdryweight}, the boundary
weight double $(\Upsilon, \nu)$ gives rise to a unital
CP-flow $\Theta=\{\Theta_t\}_{t \geq 0}$ over $\C^{n+1}$.

Let $\alpha$ and $\beta$ be the unital CP-flows over
$\C^n$ and $\C$, respectively, induced by $(\phi, \nu)$ and
$(\imath_\C, \nu)$.  Since $\Upsilon$ is a generalized Schur map in the sense of \eqref{upsilon-schur},
it follows from Remark \ref{summary} applied to its boundary weight map that $\Theta$ has the form
\begin{displaymath}
\Theta_t = \left(\begin{array}{cc} \alpha_t & \sigma_t \\
\sigma_t^* & \beta_t \end{array} \right)
\end{displaymath}
for some semigroup $\sigma=\{\sigma_t\}_{t \geq 0}$ of maps from $B(L^2(0, \infty), \C^n \otimes
L^2(0, \infty))$ into itself.  Suppose
\begin{displaymath}
\Theta \geq \Theta'  = \left(\begin{array}{cc} \alpha' & \sigma \\
\sigma^* & \beta' \end{array} \right)
\end{displaymath}
for some CP-flow $\Theta'$ over $\C^{n+1}$.
Let $\pi=\{\pi_t\}_{t>0}$ and $\pi' = \{\pi_t'\}_{t>0}$ be the
generalized boundary representations for $\Theta$ and $\Theta'$,
respectively.  Since $\Theta \geq \Theta'$ and $\Upsilon$ is a unital
$q$-positive Schur map, Lemma~\ref{diags} implies that for each $k=1, \ldots, n+1$,
there is some positive boundary weight $\omega_k \in \mathfrak{A}
(L^2(0, \infty))_*$ such that
\begin{equation} \label{pit'} \pi'_t(X^{kk}) =
\frac{(\omega_k)_t(X)}{1+(\omega_k)_t(\Lambda)} e_{kk} \end{equation}
for every $X \in B(L^2(0, \infty))$.
We note that $\phi$ is a Schur map, hence a direct calculation shows that
for $r \leq s \in \{1,\dots,n+1\}$,
\begin{equation} \label{pit}
\pi_t(X^{rs}) = \frac{\nu_t(X)}{1+\nu_t(\Lambda) + i (\lambda_r - \lambda_s)}e_{rs}.
\end{equation}
Since $\pi_t - \pi_t'$ is completely positive for all $t>0$, equations \eqref{pit'}
and \eqref{pit} (when $r=s=k$) imply that
$$\frac{\nu_t}{1+\nu_t(\Lambda)} - \frac{(\omega_k)_t}{1+(\omega_k)_t(\Lambda)}$$
is a positive functional in $B(L^2(0, \infty))_*$ for all $t>0$.  In other words, for $k=1, \dots, n+1$,
\begin{equation}\label{dom2}
\nu \geq_q \omega_k.
\end{equation}

Now let us fix $k \in \{1, \ldots, n\}$, and let
$\iota: M_2(B(L^2(0,\infty)) \to  M_{n+1}(B(L^2(0,\infty)))$ be the injective $*$-homomorphism given by
$$
\iota
\begin{pmatrix}
A_{11} & A_{12} \\
A_{21} & A_{22}
\end{pmatrix}
 = (A_{11})^{kk} + (A_{12})^{k,n+1} +
(A_{21})^{n+1,k}  + (A_{22})^{n+1,n+1}.
$$
Since $\iota$ is a $*$-homomorphism, it is clear that it is completely positive.
Let also $E:M_{n+1}(B(L^2(0,\infty))) \to M_2(B(L^2(0,\infty))$ be the completely positive map given by
$$
E( A ) = \begin{pmatrix}
A_{kk} & A_{k,n+1} \\
A_{n+1,k} & A_{n+1,n+1}
\end{pmatrix} .
$$

Now note that $\vartheta_t = E\circ \Theta_t \circ \iota$ and $\vartheta_t' = E\circ \Theta_t' \circ \iota$ are generalized Schur maps on $B(L^2(0,\infty)\oplus L^2(0,\infty))$ with respect to the decomposition $L^2(0,\infty) \oplus L^2(0,\infty)$. Furthermore, $\vartheta$ and $\vartheta'$ are CP-flows over $\cc\oplus\cc$. Let $\xi_t$ and $\xi_t'$ be their generalized boundary representations.
Now using the notation for generalized Schur maps, notice that since $\Theta$ and $\Theta'$ have the corner $\sigma$ in common, it follows that $[\vartheta_t]_{12}=[\vartheta_t']_{12}$.
Thus it follows by Proposition \ref{corner-computing} that $[\xi_t(X)]_{12}=[\xi_t'(X)]_{12}$
for all $X \in M_2(B(L^2(0, \infty)))$,
where $[\xi_t(X)]_{ij}=(\xi_t)_{ij}(X_{ij})$ and $[\xi_t'(X)]_{ij}=(\xi_t')_{ij}(X_{ij})$ for all $i,j=1,2$ since $\xi_t$
and $\xi_t'$ are generalized Schur maps.   Furthermore, observe that for every $t>0$ and $X= (X_{ij}) \in M_2(B(L^2(0, \infty)))$,
\begin{align*}
\pi_t( \iota(X) ) & = (\xi_t)_{11}(X_{11}) e_{kk} + (\xi_t)_{12}(X_{12})  e_{k,n+1} +
(\xi_t)_{21}(X_{21}) e_{n+1,k}  + (\xi_t)_{22}(X_{22}) e_{n+1,n+1}, \\
\pi'_t( \iota (X) ) & = (\xi_t')_{11}(X_{11}) e_{kk} + (\xi'_t)_{12}(X_{12})  e_{k,n+1} +
(\xi'_t)_{21}(X_{21}) e_{n+1,k}  + (\xi_t')_{22}(X_{22}) e_{n+1,n+1} \\
& = (\xi_t')_{11}(X_{11}) e_{kk} + (\xi_t)_{12}(X_{12})  e_{k,n+1} +
(\xi_t)_{21}(X_{21}) e_{n+1,k}  + (\xi_t')_{22}(X_{22}) e_{n+1,n+1}.
\end{align*}
Thus, by combining equations \eqref{pit'} and \eqref{pit} with the fact that $\lambda_{n+1}=0$, we obtain
that for every $X = (X_{ij}) \in  M_2(B(L^2(0,\infty)))$,
$$
\xi_t'(X) =
\begin{pmatrix}
\dfrac{(\omega_k)_t(X_{11})}{1+(\omega_k)_t(\Lambda)}
& \dfrac{\nu_t(X_{12})}{1+\nu_t(\Lambda) + i \lambda_k}  \\  \noalign{\medskip}
\dfrac{\nu_t^*(X_{21})}{1+\nu_t^*(\Lambda)
- i \lambda_k}  & \dfrac{(\omega_{n+1})_t(X_{22})}{1+(\omega_{n+1})_t(\Lambda)}
\end{pmatrix}.
$$
The above map is completely positive by construction for every $t>0$, since it is the
generalized boundary representation of a CP-flow. Hence
$\frac{1}{1+i \lambda_k} \nu$ is a $q$-corner from $\omega_k$ to $\omega_{n+1}$.
We know from Remark~\ref{hypnote} that $\frac{1}{1+i \lambda_k} \nu$ is a hyper-maximal $q$-corner
from $\nu$ to $\nu$,
so $\nu= \omega_k = \omega_{n+1}$ by \eqref{dom2}.

Thus we conclude that $\omega_k = \nu$ for each $k=1,
\ldots, n+1$, and it follows by \eqref{pit'} and \eqref{pit} that
$$
(\pi_t-\pi_t')(I) = 0.
$$
But $\pi_t - \pi_t'$ is completely positive by Theorem~\ref{boundary-representation-subordinates}, hence we have that
$$||\pi_t-\pi_t'|| = ||(\pi_t-\pi_t')(I)|| =0, $$ thus $\pi_t=\pi_t'$ for all $t>0$,   whereby
$\Theta'=\Theta$ again by Theorem~\ref{boundary-representation-subordinates}. Therefore,
$\sigma$ is a hyper-maximal flow corner from $\alpha$ to $\beta$.  Hence,
$\alpha^d$ and $\beta^d$ are cocycle conjugate by Theorem~\ref{thm-corners}.
\end{proof}

\section{Unitary equivalence of boundary weight maps}
The following proposition is a direct consequence of Bhat's theorem and Arveson's characterization
of minimality. Although we could not find a convenient reference for the it in the literature, we
believe that it is already known. We include a proof here for the convenience of the reader. We
thank Bob Powers for pointing out its role in sharpening the result which follows the proposition.

\begin{prop}\label{conjugacy-CP-semigroups}
Let $\alpha$ and $\beta$ be unital CP-semigroups acting on $B(K_\alpha)$, $B(K_\beta)$ with minimal
dilations $\alpha^d$ and $\beta^d$, respectively. Suppose that there exists a unitary $V:K_\beta \to
K_\alpha$  such that
$$
\beta_t(A) = V^*\alpha_t(VAV^*)V
$$
for all $A\in B(K_\beta)$ and $t\geq 0$. Then $\alpha^d$ and $\beta^d$ are conjugate
$E_0$-semigroups.
\end{prop}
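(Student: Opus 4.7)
The plan is to use the uniqueness part of Bhat's dilation theorem (Theorem~\ref{thm:Bhat's-dilation-thm}). The idea is to transport the minimal dilation $\alpha^d$ of $\alpha$ into a minimal dilation of $\beta$ by twisting the canonical isometry with $V$, and then invoke uniqueness to conclude $\alpha^d \cong \beta^d$.

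Concretely, let $(\alpha^d, W_\alpha)$ be the minimal dilation of $\alpha$, acting on $B(H_\alpha)$, so that $W_\alpha : K_\alpha \to H_\alpha$ is an isometry with $\alpha_t(A) = W_\alpha^* \alpha_t^d(W_\alpha A W_\alpha^*) W_\alpha$ and $\alpha_t^d(W_\alpha W_\alpha^*) \geq W_\alpha W_\alpha^*$ for $t > 0$. Define $W := W_\alpha V : K_\beta \to H_\alpha$, which is an isometry since $V$ is unitary and $W_\alpha$ is isometric. First I would check that $(\alpha^d, W)$ is itself a dilation of $\beta$: using the hypothesis and the defining property of $(\alpha^d, W_\alpha)$,
\begin{equation*}
W^* \alpha_t^d(W A W^*) W = V^* W_\alpha^* \alpha_t^d(W_\alpha V A V^* W_\alpha^*) W_\alpha V = V^* \alpha_t(V A V^*) V = \beta_t(A)
\end{equation*}
for all $A \in B(K_\beta)$ and $t \geq 0$. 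The increase condition is immediate because $WW^* = W_\alpha V V^* W_\alpha^* = W_\alpha W_\alpha^*$, so $\alpha_t^d(WW^*) \geq WW^*$ for $t > 0$.

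Next I would verify that $(\alpha^d, W)$ is minimal. Since $V$ is unitary, as $A$ runs over $B(K_\beta)$ the operator $VAV^*$ runs over $B(K_\alpha)$, and as $f$ runs over $K_\beta$ the vector $Vf$ runs over $K_\alpha$. Therefore the set
\begin{equation*}
\{ \alpha^d_{t_1}(W A_1 W^*) \cdots \alpha^d_{t_n}(W A_n W^*) W f : f \in K_\beta,\, A_i \in B(K_\beta),\, n \in \nn\}
\end{equation*}
coincides with
\begin{equation*}
\{ \alpha^d_{t_1}(W_\alpha A'_1 W_\alpha^*) \cdots \alpha^d_{t_n}(W_\alpha A'_n W_\alpha^*) W_\alpha f' : f' \in K_\alpha,\, A'_i \in B(K_\alpha),\, n \in \nn \},
\end{equation*}
whose span is dense in $H_\alpha$ by the minimality of $(\alpha^d, W_\alpha)$.

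Having produced two minimal dilations of $\beta$, namely $(\alpha^d, W)$ and $(\beta^d, W_\beta)$, the uniqueness part of Bhat's theorem supplies a $*$-isomorphism $\theta : B(H_\alpha) \to B(H_\beta)$ implementing a conjugacy; in particular $\theta \circ \alpha_t^d = \beta_t^d \circ \theta$ for all $t \geq 0$, which is exactly the assertion that $\alpha^d$ and $\beta^d$ are conjugate $E_0$-semigroups. There is no real obstacle here: the entire content is packaged into Bhat's uniqueness statement, and the only thing to check by hand is that the twisted pair $(\alpha^d, W_\alpha V)$ satisfies the dilation axioms and retains minimality, both of which follow mechanically from the unitarity of $V$.
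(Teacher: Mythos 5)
Your proof is correct and follows essentially the same route as the paper: twist the canonical isometry by $V$ to exhibit $\alpha^d$ as a minimal dilation of $\beta$, then invoke the uniqueness part of Bhat's theorem. The only cosmetic difference is that you note the two spanning sets coincide, while the paper just records one containment, which already suffices.
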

\begin{proof} Suppose that $\alpha^d$ acting on $B(H)$ is a minimal dilation, i.e. there exists an
isometry $W:K_\alpha \to H$ such that $WW^*$ is an increasing projection for $\alpha^d$ for which
$$
\alpha_t(A)=W^*\alpha_t ^d(WAW^*)W
$$
for all $A\in B(K_\alpha)$ and $t\geq 0$ and furthermore $H = \cspan(\mathcal{S}_\alpha)$ where
$$
\mathcal{S}_\alpha=\{\alpha_{t_1} ^d(WA_1W^*) \cdots \alpha_{t_n} ^d(WA_nW^*) Wf
: f \in H, A_i \in B(K_\alpha), t_i \geq 0, n \in \mathbb{N}\}.
$$
In order to show that $\alpha^d$ and $\beta^d$ are conjugate, it suffices to show that $\alpha^d$ is
a minimal dilation of $\beta$.  This is equivalent
to showing that there exists an isometry 
$Z: K_\beta \to H$ such that $ZZ^*$
is an increasing projection for $\alpha^d$ for which
$$
\beta_t(A)=Z^*\alpha_t ^d(ZAZ^*)Z
$$
for all $A\in B(K_\beta)$ and $t\geq 0$ and furthermore $H = \cspan(\mathcal{S}_\beta)$
where
$$
\mathcal{S}_\beta=\{\alpha_{t_1} ^d(ZA_1Z^*) \cdots \alpha_{t_n} ^d(ZA_nZ^*) Zf
: f \in H, A_i \in B(K_\beta), t_i \geq 0, n \in \mathbb{N}\}.$$

Let $Z: K_\beta\rightarrow H$ be the isometry $Z=WV$. Note that
\begin{align*}
\beta_t(A) &= V^* \alpha_t(VAV^*)V = V^* \Big(W^* \alpha_t ^d(W(VAV^*)W^*) W\Big) V
\\ & =   V^*W^* \alpha_t ^d(WVAV^*W^*)WV = Z^* \alpha_t ^d(ZAZ^*)Z
\end{align*}
for all $A \in B(H)$ and $t \geq 0$. Furthermore, $ZZ^*$ is increasing for $\alpha^d$
because $ZZ^*=WW^*$ and $WW^*$ is increasing for $\alpha^d$.

Let $\xi \in \mathcal{S}_\alpha$, so that there exist $\{A_i\}_{i=1}^n \subset B(K_\alpha)$,
$t_i\geq 0$ for $i=1,\dots, n$, and $f \in K_\alpha$  such that
$$
\xi =  \alpha_{t_1} ^d(WA_1W^*) \cdots \alpha_{t_n} ^d(WA_nW^*)f.
$$
Letting $g=V^*f \in K_\beta$ and $B_i = V^*A_iV \in B(K_\beta)$ for all $i=1,\ldots n$, we observe
that
$$
\xi= \alpha_{t_1} ^d(WA_1W^*) \cdots \alpha_{t_n} ^d(WA_nW^*)f
= \alpha_{t_1} ^d(ZB_1Z^*) \cdots \alpha_{t_n} ^d(ZB_nZ^*)Zg,
$$
hence $\xi \in \mathcal{S}_\beta$. Therefore $\mathcal{S}_\alpha \subseteq \mathcal{S}_\beta$.
Consequently
$H=\cspan(S_\beta)$.
\end{proof}

Given a Hilbert space $H$ and a unitary $V\in B(H)$ we denote by $\Ad_V : B(H) \to B(H)$ the map
given by $\Ad_V(X)=V^*XV$. Thus we obtain the map $\wad_V : B(H)_* \to B(H)_*$ given by
$$
\wad_V(\rho)(X) = \rho(\Ad_V(X)) = \rho(V^*XV)
$$
for all $X \in B(H)$ and $\rho \in B(H)_*$.

\begin{thm}\label{conjugation-thm}
Let $H = K \otimes L^2(0, \infty)$, where $K$ is a separable Hilbert space.
Let $\alpha$ be a CP-flow over $K$,
and let $\omega: B(K)_* \to \mathfrak{A}(H)_*$ be its boundary weight map.  For every unitary $U \in
B(K)$, let $\U=U \otimes I_{L^2(0, \infty)} \in B(H)$. Define a map $\omega^U: B(K)_* \to \mathfrak{A}(H)_*$
by
$$
\omega^U  =  \wad_{\U^*} \circ \omega \circ \wad_U.
$$
Then $\omega^U$ is the boundary weight map of the unital CP-flow $\alpha_U$ over $K$ given by
\begin{equation}\label{alphu}
(\alpha_U)_t(A) = \U^* \alpha_t( \U A \U^*) \U \end{equation} for all $A \in B(H)$, $t \geq 0$.
\end{thm}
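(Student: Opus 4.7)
The plan is to first verify that $\alpha_U$ is indeed a CP-flow, then compute its resolvent in terms of $R_\alpha$, and finally identify the boundary weight map via the defining relation \eqref{resolvent} and uniqueness.

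The key underlying observation is that $\U = U \otimes I_{L^2(0,\infty)}$ commutes with both the shift $S_t = I_K \otimes V_t$ and with $\Lambda(I_K) = I_K \otimes M_{e^{-x}}$, hence with $(I - \Lambda(I_K))^{1/2}$. From this, the following facts are immediate:
\begin{itemize}
\item $\Ad_\U$ preserves $\fa(H)$, so $\wad_{\U^*}$ preserves $\fa(H)_*$, which shows that $\omega^U = \wad_{\U^*} \circ \omega \circ \wad_U$ indeed maps $B(K)_*$ into $\fa(H)_*$.
\item $\alpha_U$ is a one-parameter semigroup of unital normal completely positive maps, since $(\alpha_U)_s \circ (\alpha_U)_t = \Ad_{\U^*} \circ \alpha_s \circ \alpha_t \circ \Ad_\U = (\alpha_U)_{s+t}$ using $\Ad_\U \circ \Ad_{\U^*} = \id$, and $(\alpha_U)_t(I) = \U^*\alpha_t(I)\U = I$.
\item $\alpha_U$ is a CP-flow: $(\alpha_U)_t(A) S_t = \U^* \alpha_t(\U A \U^*) S_t \U = \U^* S_t \U A \U^* \U = S_t A$, since $\U$ commutes with $S_t$ and $\alpha_t(B)S_t = S_t B$.
\end{itemize}

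Next, I would compute the resolvent by interchanging $\Ad_\U$ with the integral defining $R_\alpha$:
\begin{equation*}
R_{\alpha_U} = \Ad_{\U^*} \circ R_\alpha \circ \Ad_\U, \qquad \widehat{R}_{\alpha_U} = \wad_{\U^*} \circ \widehat{R}_\alpha \circ \wad_\U.
\end{equation*}
Then I would establish two intertwining identities on preduals. From the fact that $\U$ commutes with $S_t$, a direct computation gives $\Gamma \circ \Ad_{\U^*} = \Ad_{\U^*} \circ \Gamma$, and dualizing yields $\widehat{\Gamma} \circ \wad_{\U^*} = \wad_{\U^*} \circ \widehat{\Gamma}$. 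Similarly, from $\Lambda(U^*AU) = \U^* \Lambda(A)\U$ (which follows because $\Lambda(A) = A \otimes M_{e^{-x}}$), one obtains $\widehat{\Lambda} \circ \wad_\U = \wad_U \circ \widehat{\Lambda}$.

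Using the defining identity $\widehat{R}_\alpha(\eta') = \widehat{\Gamma}(\omega(\widehat{\Lambda}\eta') + \eta')$ with $\eta' = \wad_\U \eta$ together with the two intertwining relations and $\wad_{\U^*} \circ \wad_\U = \id$, I compute
\begin{equation*}
\widehat{R}_{\alpha_U}(\eta) = \wad_{\U^*} \widehat{\Gamma}\bigl(\omega(\wad_U \widehat{\Lambda}\eta)\bigr) + \wad_{\U^*} \widehat{\Gamma}(\wad_\U \eta) = \widehat{\Gamma}\bigl(\omega^U(\widehat{\Lambda}\eta) + \eta\bigr).
\end{equation*}
Finally, I would appeal to the uniqueness of the boundary weight map: since the resolvent $\widehat{R}_{\alpha_U}$ satisfies \eqref{resolvent} with $\omega^U$ in place of $\omega$, and since such a representation is uniquely characterized by Proposition~\ref{get-omega} (via formula \eqref{got-omega}), it follows that $\omega^U$ must be the boundary weight map associated to $\alpha_U$.

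There is no real obstacle here beyond bookkeeping; the only point requiring a little care is the direction of the intertwinings $\widehat{\Lambda} \circ \wad_\U = \wad_U \circ \widehat{\Lambda}$ versus $\wad_{\U^*}$ on the $\Gamma$ side — the asymmetry between $\U$ and $U$ in the two relations is precisely what makes the twisted weight map formula $\omega^U = \wad_{\U^*} \circ \omega \circ \wad_U$ (rather than $\wad_U$ on both sides) come out correctly.
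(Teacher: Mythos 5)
Your argument is correct and uses the same core ingredients as the paper's proof: the commutation of $\U$ with $S_t$, with $\Lambda(I_K)$ and with $\Gamma$, and the identification of the resolvent of the conjugated semigroup with $\hat{\Gamma}(\omega^U(\hat{\Lambda}\eta)+\eta)$. The only organizational difference is the direction of the argument: the paper first checks that $\omega^U$ satisfies the hypotheses of Theorem~\ref{powerstheorem} (so that it induces a CP-flow) and then identifies that flow with \eqref{alphu} via the resolvent, whereas you verify directly that \eqref{alphu} defines a CP-flow and then invoke the existence and uniqueness of its boundary weight map via Proposition~\ref{get-omega} and \eqref{got-omega} --- a legitimate and slightly leaner route that avoids re-checking complete positivity of $\omega^U_t(I+\hat{\Lambda}\omega^U_t)^{-1}$.
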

\begin{proof}  For each $t>0$, let
$E_{(t, \infty)}= S_t S_t^* \in B(H)$.  In other words, $E_{(t, \infty)} = I_K \otimes V_tV_t^*$,
where $V_t$ is the right shift by $t$ units on $L^2(0, \infty)$.
Note that $\Ad_{\U^*}$ leaves $\mathfrak{A}(H)$ invariant, so $\rho \rightarrow \omega^U(\rho)$
maps $B(K)_*$ into $\mathfrak{A}(H)_*$, therefore $\omega^U$
is well-defined. Furthermore, $\omega^U$
is the composition of completely positive maps and is therefore completely positive.  Note
that $\U$ commutes with $E_{(t, \infty)}$ for all $t>0$, so
\begin{eqnarray*}
\omega^U_t(\rho)(A) & = & \omega^U(\rho)( E_{(t, \infty)} A E_{(t, \infty)})=
\omega(\wad_{\U}(\rho))(\U E_{(t, \infty)} A E_{(t, \infty)}
\U^*) \\ & = & \omega(\wad_U(\rho))(E_{(t, \infty)} \U A \U^* E_{(t, \infty)})
= \omega_t(\wad_U(\rho)(\U A \U^*) \\
& = & (\wad_{\U^*} \circ \omega_t \circ \wad_U)(\rho),\end{eqnarray*} hence $\omega^U
_t(\rho) \in B(H)_*$
for all $\rho \in B(K)_*$ and all $t>0$.  Furthermore, we have
$$\omega^U _t(I + \widehat{\Lambda}\omega^U _t)^{-1} = \wad_{\U^*} \circ \omega_t(I +
\widehat{\Lambda}\omega_t)^{-1} \circ \wad_U,$$ so the maps $\widehat{\pi}_t: =
\omega^U _t(I + \widehat{\Lambda}\omega^U _t)^{-1}$ are completely positive contractions
of $B(K)_*$ into $B(H)_*$ for all $t>0$, hence $\omega^U$ is the boundary
weight map of a CP-flow $\alpha_U$ over $K$.  It remains to show that $\alpha_U$
is given by equation \eqref{alphu}.

Recall that the resolvent $R_{\alpha_U}$ for
$\alpha_U$ satisfies
\begin{equation} \label{resolve} \hat{R}_{\alpha_U}(\eta) (A)
= \int_0 ^ \infty e^{-t} (\alpha_U)_t(A) dt \qquad \textrm{ and } \qquad
\hat{R}_{\alpha_U}(\eta) = \hat{\Gamma} (\omega^U(\hat{\Lambda}\eta) + \eta)  \end{equation}
for all $A \in B(H)$, $\eta \in B(H)_*$.

We make four observations:
\begin{enumerate}
\item[(I)]  $\U S_t = S_t \U$ for all $t \geq 0$,\\ 
\item[(II)] $\U^* \Lambda(X) \U = \Lambda(U^*XU)$ for all $X \in B(K)$,\\ 
\item[(III)] $\wad_U\Big(\hat{\Lambda} \eta \Big) =
\hat{\Lambda}\Big(\wad_{\U}(\eta)\Big)$ for all $\eta \in B(H)_*$,\\
\item[(IV)]  $\U^* \Gamma(B)\U = \Gamma(\U^* B \U)$ for all $B \in B(H)$.
\end{enumerate}
Equation (I) and the fact that $\alpha$ is a CP flow imply that the mappings $A \rightarrow
\U^* \alpha_t(\U A \U^*) \U$ for $A \in B(H)$ and $t \geq 0$ define a CP-flow over $\C^n$, since
\begin{equation*}
\U^* \alpha_t(\U A \U^*) \U S_t = \U^* \Big( \alpha_t(\U A \U^*)  S_t \Big) \U
= \U^*(S_t \U A \U^*) \U = \U^* S_t \U A = S_t A.
\end{equation*} For all $\eta \in B(H)_*$ and $A \in B(H)$, we find:
\begin{eqnarray*}
\hat{\Gamma}\Big(\omega^U (\hat{\Lambda}\eta)\Big)(A) & = &
\omega\Big(\wad_U(\hat{\Lambda}\eta)\Big)(\U\Gamma(A)\U^*)
\\ \textrm{(by (III), (IV))} & = &
\omega\Big(\hat{\Lambda}\Big(\wad_{\U}(\eta)\Big)\Big)(\Gamma(\U A \U^*))
\\ \textrm{(by \eqref{resolvent})} & = &
\hat{R}_\alpha\Big(\wad_{\U}(\eta)\Big)(\U A \U^*)
- \wad_{\U}(\eta)( \Gamma(\U A \U^*))
\\ \textrm{(by \eqref{resolvent-pure}, (IV))} & = &  \wad_{\U}(\eta)\Big( \int_0 ^ \infty
e^{-t} \alpha_t
(\U A \U^*) dt \Big) - \eta(\U^* \Gamma(\U A \U^*) \U)
\\ \textrm{(by \ (IV))} &= & \eta \Big( \int_0 ^ \infty e^{-t} \U^* \alpha_t(\U A \U^*) \U
dt\Big)
- \eta(\Gamma(A)).
\end{eqnarray*}
The above equation and equation \eqref{resolve} give us
$$\eta \Big( \int_0 ^ \infty e^{-t} \U^* \alpha_t(\U A \U^*) \U dt\Big) =
\Big(\hat{\Gamma}(\omega^U(\hat{\Lambda}\eta) + \eta)
\Big)(A)=
\eta\Big(R_{\alpha_U}(A)\Big)$$
for all $\eta \in B(H)_*$ and $A \in B(H)$, hence $\alpha_U$ has the form of equation
\eqref{alphu}.
\end{proof}

\begin{cor}\label{uniflow}
If $\phi, \psi: M_n(\C) \rightarrow M_n(\C)$ are conjugate unital $q$-positive maps and $\nu$ is a
type II Powers weight, then the boundary weight doubles $(\phi, \nu)$ and $(\psi, \nu)$ have
conjugate minimal flow dilations.
\end{cor}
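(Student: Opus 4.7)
The plan is to combine Theorem~\ref{conjugation-thm} with Proposition~\ref{conjugacy-CP-semigroups}, reducing the corollary to a single explicit computation that identifies two boundary weight maps. Since $\phi$ and $\psi$ are conjugate, fix a unitary $U \in M_n(\cc)$ with $\psi(A) = U^*\phi(UAU^*)U$ for all $A \in M_n(\cc)$, and set $\U = U \otimes I_{L^2(0,\infty)} \in B(H)$, where $H=\cc^n \otimes L^2(0,\infty)$. Let $\alpha$ and $\beta$ be the unital CP-flows over $\cc^n$ induced by the boundary weight doubles $(\phi, \nu)$ and $(\psi, \nu)$ through Proposition~\ref{bdryweight}, and denote their boundary weight maps by $\omega$ and $\omega'$, so that $\omega(\rho)(A) = \rho(\phi(\Omega_\nu(A)))$ and $\omega'(\rho)(A) = \rho(\psi(\Omega_\nu(A))) = \rho(U^*\phi(U\Omega_\nu(A)U^*)U)$.

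The main step is to verify that $\omega' = \omega^U$, where $\omega^U = \wad_{\U^*} \circ \omega \circ \wad_U$ is the boundary weight map furnished by Theorem~\ref{conjugation-thm}. The key observation is that under the canonical identification $\fa(H) \simeq M_n(\fa(L^2(0,\infty)))$, conjugation by $\U$ corresponds to conjugation of the matrix of operators by $U$, so a direct entry-wise computation using $\Omega_\nu = I_{M_n(\cc)} \otimes \nu$ yields the identity $\Omega_\nu(\U A \U^*) = U\,\Omega_\nu(A)\,U^*$ for all $A \in \fa(H)$. Combining this with $\wad_U(\rho)(Y) = \rho(U^*YU)$, I compute
$$
\omega^U(\rho)(A) = \omega(\wad_U(\rho))(\U A \U^*) = \wad_U(\rho)\bigl(\phi(U\Omega_\nu(A)U^*)\bigr) = \rho\bigl(U^*\phi(U\Omega_\nu(A)U^*)U\bigr) = \omega'(\rho)(A).
$$
Since a CP-flow is uniquely determined by its boundary weight map, this equality forces $\beta = \alpha_U$; that is, $\beta_t(A) = \U^*\alpha_t(\U A \U^*)\U$ for every $A \in B(H)$ and every $t \geq 0$.

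With this intertwining in hand, the conclusion is immediate from Proposition~\ref{conjugacy-CP-semigroups}: the minimal dilations $\alpha^d$ and $\beta^d$ are conjugate $E_0$-semigroups, and since $\alpha$ and $\beta$ are CP-flows their minimal dilations are minimal flow dilations, so the boundary weight doubles $(\phi, \nu)$ and $(\psi, \nu)$ induce conjugate minimal flow dilations. There is no substantial obstacle: all the nontrivial work has been absorbed into Theorem~\ref{conjugation-thm} and Proposition~\ref{conjugacy-CP-semigroups}, and the only step requiring verification is the elementary matrix identity $\Omega_\nu(\U A \U^*) = U\Omega_\nu(A)U^*$.
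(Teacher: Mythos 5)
Your proposal is correct and follows essentially the same route as the paper: both establish the intertwining identity $\Omega_\nu(\U A \U^*) = U\Omega_\nu(A)U^*$ (the paper phrases it dually as $\widehat{\Omega}_\nu \circ \wad_{U^*} = \wad_{\U^*}\circ\widehat{\Omega}_\nu$), conclude that $\omega^U$ is the boundary weight map of $(\psi,\nu)$, and then invoke Theorem~\ref{conjugation-thm} together with Proposition~\ref{conjugacy-CP-semigroups}. No gaps.
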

\begin{proof} Let $\nu$ be a type II Powers weight, and $\phi, \psi: M_n(\C) \rightarrow M_n(\C)$ be
conjugate unital $q$-positive maps and let $U \in M_n(\C)$ be a unitary such that
$\psi=\phi_U$. We remark that
$$
\widehat{\Omega}_\nu \circ \wad_{U^*}  =
\wad_{\U^*} \circ \widehat{\Omega}_\nu
$$
since  for all $A\in \mathfrak{A}(L^2(0,\infty)), X\in M_n(\C)$,
\begin{align*}
U\Omega_\nu(X \otimes A)U^* & = U(\nu(A)X)U^* = \nu(A)(UXU^*)  =
\Omega_\nu((UXU^*) \otimes A)\\
& = \Omega_\nu( \U (X \otimes A) \U^*).
\end{align*}
Therefore, if $\omega$ is the boundary weight map associated with the boundary weight double $(\phi,
\nu)$, i.e.
$
\omega = \widehat{\Omega}_\nu \circ \widehat{\phi}
$,
it follows that
$$
\omega^U  = \widehat{\Ad}_{\widetilde{U}^*} \circ \omega \circ \wad_U =
\wad_{\U^*} \circ \widehat{\Omega}_\nu \circ \widehat{\phi} \circ \wad_U
=  \widehat{\Omega}_\nu \circ \wad_{U^*} \circ \widehat{\phi} \circ \wad_U
=\widehat{\Omega}_\nu \circ \widehat{\phi}_U
$$
Thus, $\omega^U$ is the boundary weight map for the boundary weight double $(\phi_U, \nu)$.
Therefore, by Theorem~\ref{conjugation-thm} the unital CP-flow induced by $(\phi_U, \nu)=(\psi,
\nu)$ is conjugate to the unital CP-flow induced by $(\phi, \nu)$. Thus it follows from
Proposition~\ref{conjugacy-CP-semigroups} that the two unital CP-flows have conjugate minimal flow
dilations.
\end{proof}

We obtain as a consequence the following result.

\begin{thm}\label{rankoneconjugacy}  Let $\phi$ and $\psi$ be unital rank one $q$-positive maps on
$M_n(\C)$ and $M_k(\C)$, respectively, and let $\nu$ be a type II Powers
weight of the form
$$\pure.$$  Let $\alpha^d$
and $\beta ^d$ be the $E_0$-semigroups induced by $(\phi, \nu)$ and
$(\psi, \nu)$, respectively.  The following are equivalent:
\begin{enumerate}[(i)]
\item $\alpha^d$
and $\beta^d$ are conjugate.
\item $\alpha^d$
and $\beta^d$ are cocycle conjugate.
\item $n=k$ and $\phi$ is conjugate to $\psi$.
\end{enumerate}
\end{thm}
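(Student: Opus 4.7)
The plan is to show the chain of implications (i)$\Rightarrow$(ii)$\Rightarrow$(iii)$\Rightarrow$(i), each of which will be essentially immediate given the machinery already assembled in Sections 2--4 of the paper.

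The implication (i)$\Rightarrow$(ii) is trivial: if $\theta: B(H_1)\to B(H_2)$ is a $*$-isomorphism intertwining $\alpha^d$ and $\beta^d$, then the trivial $\beta^d$-cocycle $W_t=I$ together with $\theta$ witnesses that $\alpha^d$ and $\beta^d$ are cocycle conjugate, so every conjugate pair is automatically cocycle conjugate.

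For (ii)$\Rightarrow$(iii), I would simply invoke Theorem~\ref{statesbig}, which handles exactly the rank one case and asserts that cocycle conjugacy of the $E_0$-semigroups induced by two unital rank one $q$-positive maps (with a common type II Powers weight of the stated form) is equivalent to the maps being conjugate. Since conjugacy of maps requires them to act on the same matrix algebra, this delivers both $n=k$ and the conjugacy of $\phi$ with $\psi$ in one stroke.

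For the remaining implication (iii)$\Rightarrow$(i), which is the only substantive direction not already in the literature, I would apply Corollary~\ref{uniflow} of the present paper. That corollary states precisely that if $\phi,\psi:M_n(\cc)\to M_n(\cc)$ are conjugate unital $q$-positive maps and $\nu$ is any type II Powers weight, then the boundary weight doubles $(\phi,\nu)$ and $(\psi,\nu)$ have conjugate minimal flow dilations; since the minimal flow dilation of the CP-flow associated to a boundary weight double is by definition the induced $E_0$-semigroup, this yields conjugacy of $\alpha^d$ and $\beta^d$. I expect no obstacle here since the hard work has already been done in Theorem~\ref{conjugation-thm} (unitary equivalence of boundary weight maps under the conjugation $\omega\mapsto\omega^U=\wad_{\U^*}\circ\omega\circ\wad_U$) and in Proposition~\ref{conjugacy-CP-semigroups} (promoting conjugacy of unital CP-semigroups to conjugacy of their minimal dilations via Bhat's theorem).

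In summary, the theorem is a clean repackaging of Theorem~\ref{statesbig} (for (ii)$\Rightarrow$(iii)) together with Corollary~\ref{uniflow} (for (iii)$\Rightarrow$(i)), with (i)$\Rightarrow$(ii) being a general tautology. The only point requiring any verbal care in the write-up is to note that the $\nu$ appearing in the hypothesis of Theorem~\ref{rankoneconjugacy} is of the form $(f,\,\cdot\, f)$, which matches the hypothesis of Theorem~\ref{statesbig} for the nontrivial direction of (ii)$\Rightarrow$(iii), while Corollary~\ref{uniflow} requires no such restriction on $\nu$ beyond its being type II. There is no genuine obstacle to overcome.
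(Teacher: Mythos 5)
Your proposal is correct and follows essentially the same route as the paper: the paper's proof likewise notes that (i)$\Rightarrow$(ii) is trivial, derives (ii)$\Rightarrow$(iii) from the rank one cocycle conjugacy classification (quoted in the paper as Theorem 3.10 of \cite{jankowski2}, which is the source of Theorem~\ref{statesbig}), and obtains (iii)$\Rightarrow$(i) from Corollary~\ref{uniflow}. Your added remark about matching the form of $\nu$ to the hypotheses of Theorem~\ref{statesbig} is a reasonable point of care but introduces nothing beyond the paper's argument.
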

\begin{proof}  Trivially, (i) implies (ii), while (ii) implies (iii) by Theorem 3.10 of
\cite{jankowski2}.
Corollary~\ref{uniflow} shows that (iii) implies (i). \end{proof}

\section{Gauge group in the range rank one case}
In this section, we will calculate the gauge group for the minimal flow dilation $\alpha^d$ of the CP-flow
$\alpha$ induced by the boundary
weight double $(\phi, \nu)$, where $\phi: M_n(\C) \rightarrow M_n(\C)$ is a unital rank one $q$-positive map
and $\nu$ is a type II Powers weight of the form
$$\pure.$$

In the context of CP-flows, the local unitary cocycles are more conveniently described in terms of the associated hyper-maximal flow corners. This description remains out of reach in the case of general boundary weight doubles, however in the special case when  $\nu$ has the form $\pure$ and $\phi: M_n(\C) \rightarrow M_n(\C)$ is \emph{any} unital $q$-positive map, we present a convenient description. In the following theorem we describe explicitly  a one-to-one correspondence between the hyper-maximal flow corners from $\alpha$ to $\alpha$ and the hyper-maximal $q$-corners from $\phi$ to $\phi$.

\begin{thm}\label{corner-correspondence}
Let $\phi: M_n(\C) \rightarrow M_n(\C)$ be a unital $q$-positive map, and let $\nu$ be a type II
Powers
weight of the form $$\pure.$$  Let $\alpha$ be the unital CP-flow induced by the boundary weight
double
$(\phi, \nu)$.

Suppose $\gamma$ is a hyper-maximal $q$-corner from $\phi$ to $\phi$.   Define a
linear map $\omega: M_{2n}(\C)_* \rightarrow \fa(\C^{2n} \otimes L^2(0, \infty))_*$ by
\begin{equation} \label{boundary-weight-corner}
\omega(\rho)
\begin{pmatrix}
A_{11} & A_{12}
\\ A_{21} & A_{22}
\end{pmatrix}
=\rho \begin{pmatrix}
\phi(\Omega_\nu(A_{11})) & \gamma(\Omega_\nu(A_{12}))
\\ \gamma^*(\Omega_\nu(A_{21})) & \phi(\Omega_\nu(A_{22}))
\end{pmatrix}.
\end{equation}
Then $\omega$ is the boundary weight map of a unital CP-flow $\Theta$ of the form
$$
\Theta = \begin{pmatrix}  \alpha & \sigma \\ \sigma^* & \alpha \end{pmatrix},
$$
where $\sigma$ is a hyper-maximal flow corner from $\alpha$ to $\alpha$.  The generalized
boundary representation $\Pi_t$ for $\Theta$ is given by
\begin{equation}\label{genrep4.6}
\Pi_t = \begin{pmatrix}
\phi(I + \nu_t(\Lambda) \phi)^{-1} \Omega_{\nu_t} & \gamma(I + \nu_t(\Lambda) \gamma)^{-1} \Omega_{\nu_t}
\\ \gamma^*(I + \nu_t(\Lambda) \gamma^*)^{-1} \Omega_{\nu_t} & \phi(I + \nu_t(\Lambda) \phi)^{-1} \Omega_{\nu_t}
\end{pmatrix}
\end{equation}
for all $t>0$.

Conversely, suppose that $\sigma$ is a hyper-maximal flow corner from $\alpha$ to $\alpha$.  Let $\Theta$
be the CP-flow
$$
\Theta = \begin{pmatrix}  \alpha & \sigma \\ \sigma^* & \alpha \end{pmatrix}.
$$
Let $\omega$ be the
boundary weight map
for $\Theta$ and let $\Pi_t$ be the generalized  boundary representation for $\Theta$.  Then there exists a unique hyper-maximal $q$-corner
$\gamma$ from $\phi$ to $\phi$ such that $\omega$ is given by equation \eqref{boundary-weight-corner}.
Furthermore, $\Pi_t$ satisfies equation \eqref{genrep4.6} for every $t>0$.
\end{thm}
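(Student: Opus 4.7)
The plan is to establish the correspondence via the unital block map
$$\Phi = \begin{pmatrix}\phi & \gamma \\ \gamma^* & \phi\end{pmatrix}: M_{2n}(\cc) \to M_{2n}(\cc),$$
noting that $\gamma$ is a $q$-corner from $\phi$ to $\phi$ if and only if $\Phi$ is $q$-positive (and that $\Phi$ is then unital). Under this identification, the CP-flow $\Theta$ of the theorem will coincide with the CP-flow induced by the boundary weight double $(\Phi, \nu)$ via Proposition~\ref{bdryweight}.

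For the forward direction, given a hyper-maximal $q$-corner $\gamma$, form $\Phi$ and apply Proposition~\ref{bdryweight} to $(\Phi, \nu)$. The boundary weight map $\widehat{\Omega}_\nu \circ \widehat{\Phi}$ unfolds block-wise into \eqref{boundary-weight-corner}, and the generalized boundary representation $\Phi(I+\nu_t(\Lambda)\Phi)^{-1}\Omega_{\nu_t}$ unfolds into \eqref{genrep4.6} because $\Phi$ preserves the block structure, so its resolvent inverts within each block. Since $\Phi$ is a generalized Schur map with respect to $\cc^n \oplus \cc^n$, Proposition~\ref{gen-schur-flow-weight} applied to $\omega$ together with Remark~\ref{summary} gives $\Theta = \begin{pmatrix}\alpha_1 & \sigma \\ \sigma^* & \alpha_2\end{pmatrix}$ for some flow corner $\sigma$, and matching diagonal blocks of $\omega$ against the boundary weight map of $\alpha$ forces $\alpha_1 = \alpha_2 = \alpha$.

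The crucial step in the forward direction is the hyper-maximality of $\sigma$. Suppose $\Theta \geq \Theta'$ with $\Theta' = \begin{pmatrix}\alpha^{(1)} & \sigma \\ \sigma^* & \alpha^{(2)}\end{pmatrix}$. By Remark~\ref{summary}, each $\alpha^{(i)} \leq \alpha$, and the boundary weight map $\omega'$ of $\Theta'$ has the same off-diagonal block $\widehat{\Omega}_\nu \circ \widehat{\gamma}$ as $\omega$. For $\nu$ of the form $\pure$, the correspondence between subordinates of the CP-flow induced by $(\phi,\nu)$ and $q$-subordinates of $\phi$ implicit in the proof of Proposition~\ref{hypqc} in~\cite{jankowski1} lets us write each $\alpha^{(i)}$ as induced by $(\phi^{(i)}, \nu)$ with $\phi \geq_q \phi^{(i)}$. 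Then $\omega' = \widehat{\Omega}_\nu \circ \widehat{\Phi'}$ with $\Phi' = \begin{pmatrix}\phi^{(1)} & \gamma \\ \gamma^* & \phi^{(2)}\end{pmatrix}$; $q$-positivity of $\omega'$ forces $\Phi' \geq_q 0$, and the subordination $\Theta' \leq \Theta$ forces $\Phi \geq_q \Phi'$, so hyper-maximality of $\gamma$ yields $\phi^{(i)} = \phi$ and hence $\alpha^{(i)} = \alpha$.

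The converse direction runs the same machinery in reverse. Given a hyper-maximal flow corner $\sigma$, form $\Theta$; since $\Theta_t$ is a generalized Schur map with respect to $\cc^n \oplus \cc^n$, Proposition~\ref{gen-schur-flow-weight} yields that $\omega$ is also a generalized Schur map whose diagonal blocks are the boundary weight map of $\alpha$, namely $\widehat{\Omega}_\nu \circ \widehat{\phi}$. The same correspondence from~\cite{jankowski1} allows us to write $\omega_{12} = \widehat{\Omega}_\nu \circ \widehat{\gamma}$ for a unique linear $\gamma: M_n(\cc) \to M_n(\cc)$, uniqueness following from the fact that $\Omega_\nu$ is surjective so $\widehat{\Omega}_\nu$ is injective. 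Then $q$-positivity of $\omega$ translates to $q$-positivity of $\Phi$, making $\gamma$ a $q$-corner, and hyper-maximality of $\sigma$ translates to hyper-maximality of $\gamma$ by running the forward argument in reverse: any $\Phi \geq_q \Phi' \geq_q 0$ sharing the same $\gamma$ gives rise via $(\Phi', \nu)$ to a CP-flow $\Theta' \leq \Theta$ sharing the same $\sigma$, which hyper-maximality of $\sigma$ forces to equal $\Theta$. The formula \eqref{genrep4.6} then follows from Proposition~\ref{bdryweight} applied to $(\Phi, \nu)$. The main obstacle is the identification $\omega_{12} = \widehat{\Omega}_\nu \circ \widehat{\gamma}$ and the parallel characterization of subordinate diagonal blocks, which are precisely where the rank-one structure of $\nu$ is indispensable; once in hand, the remaining $q$-positivity and hyper-maximality passages on either side are formal.
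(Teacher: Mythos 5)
Your proposal is correct and follows essentially the same route as the paper: both directions ultimately rest on the facts established in the proof of Proposition 4.6 of \cite{jankowski1} (Proposition~\ref{hypqc} here) --- namely that, because $\nu_t$ is a vector functional, subordinates of $\alpha$ and of $\Theta$ sharing the corner $\sigma$ have generalized boundary representations factoring through $\Omega_{\nu_t}$ and hence arise from boundary weight doubles with the same $\nu$ --- together with the formula $\widehat{\Pi}_t=\omega_t(I+\hat{\Lambda}\omega_t)^{-1}$ for \eqref{genrep4.6} and the surjectivity of $\Omega_\nu$ for the uniqueness of $\gamma$. The paper simply cites these two key facts rather than unpacking the subordinate/$q$-subordinate correspondence as you do, and in the converse direction it recovers \eqref{boundary-weight-corner} by comparing the generalized boundary representation of $\Theta$ with that of the CP-flow induced by $(\Phi,\nu)$ rather than identifying $\omega_{12}$ directly, but these are presentational differences only.
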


\begin{proof} We will use two key facts established in the proof of Proposition 4.6 of \cite{jankowski1}.
Suppose that $\gamma$ is a hyper-maximal $q$-corner from
$\phi$ to $\phi$. 
It was shown in the proof of Proposition 4.6 of \cite{jankowski1} that the boundary
weight map defined by
\eqref{boundary-weight-corner}
induces a unital CP-flow of the form
\begin{displaymath}
\Theta= \left( \begin{array}{cc} \alpha & \sigma \\ \sigma^* & \alpha \end{array} \right),
\end{displaymath}
where $\sigma$ is a hyper-maximal flow corner from $\alpha$ to $\alpha$.  The fact that
the generalized boundary representation $\Pi_t$ for $\Theta$ satisfies \eqref{genrep4.6}
is a direct consequence of the formula
$\widehat{\Pi}_t = \omega_t(I + \hat{\Lambda}\omega_t)^{-1}$.  This proves the forward direction.

For the backward direction, let $\sigma$ be a hyper-maximal flow corner from $\alpha$ to $\alpha$,
and let $\Theta$ be the CP-flow
\begin{displaymath}
\Theta= \left( \begin{array}{cc} \alpha & \sigma \\ \sigma^* & \alpha \end{array} \right).
\end{displaymath}
Let $\omega$ be the boundary weight map and let $\Pi_t$ be the generalized boundary representation for $\Theta$.  In the proof of Proposition
4.6 of \cite{jankowski1}, it was shown that there exists a hyper-maximal $q$-corner
$\gamma$ from $\phi$ to $\phi$ such that $\Pi_t$ is given by \eqref{genrep4.6} for every $t>0$.

It remains to show that $\omega$ satisfies equation \eqref{boundary-weight-corner} and to establish that $\gamma$ is unique. For the former, observe that by Proposition~\ref{bdryweight}, since
$$
\begin{pmatrix} \phi & \gamma \\ \gamma^* & \phi
\end{pmatrix}
$$
is unital and $q$-positive, the boundary weight map $\omega'$ defined by
$$
\omega'(\rho)
\begin{pmatrix}
A_{11} & A_{12} \\ A_{21} & A_{22}
\end{pmatrix}
= \rho
\begin{pmatrix}
\phi(\Omega_\nu(A_{11})) & \gamma(\Omega_\nu(A_{12}))  \\ \gamma^*(\Omega_\nu(A_{21}))
& \phi (\Omega_\nu(A_{22}))
\end{pmatrix}
$$
induces a unital CP-flow $\Theta'$. By the forward direction of the theorem, its generalized boundary representation $\Pi'_t$ satisfies \eqref{genrep4.6}. Thus $\Pi_t=\Pi'_t$ for all $t>0$ and it follows that $\Theta=\Theta'$ and $\omega=\omega'$, establishing \eqref{boundary-weight-corner}.

We now show that $\gamma$ is unique. Suppose
$\gamma': M_n(\C) \rightarrow M_n(\C)$ is another linear map such that
$$
\omega(\rho)
\begin{pmatrix}
A_{11} & A_{12} \\ A_{21} & A_{22}
\end{pmatrix}
= \rho
\begin{pmatrix}
\phi(\Omega_\nu(A_{11})) & \gamma'(\Omega_\nu(A_{12}))  \\ (\gamma')^*(\Omega_\nu(A_{21}))
& \phi (\Omega_\nu(A_{22}))
\end{pmatrix}
$$
for all $\rho \in M_{2n}(\C)_*$ and $(A_{ij}) \in \fa(\C^{2n} \otimes L^2(0, \infty))$.
It follows that $\gamma' \circ \Omega_\nu = \gamma \circ \Omega_\nu$. Since $\Omega_\nu$ is onto, we conclude that $\gamma=\gamma'$.
\end{proof}
In light of the bijection between hyper-maximal flow corners from $\alpha$ to $\alpha$
and elements of $G_{flow}(\alpha^d)$ given by Theorem \ref{thm-corners}, we present
an immediate corollary of Theorem
\ref{corner-correspondence}.
\begin{cor}\label{bijection-between-corners}
Let $\phi: M_n(\C) \rightarrow M_n(\C)$ be a unital $q$-positive map, and let
$\nu$ be a type II Powers weight of the form
$$\pure.$$
Let $\alpha$ be the CP-flow induced by $(\phi, \nu)$.  Then there is a bijection
between hyper-maximal $q$-corners from $\phi$ to $\phi$ and elements of
$G_{flow}(\alpha^d)$.
\end{cor}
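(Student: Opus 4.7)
The plan is to obtain the bijection as the composition of two previously established bijections. First, by Theorem~\ref{corner-correspondence}, the assignment $\gamma \mapsto \sigma$ (where $\sigma$ is the off-diagonal block of the CP-flow $\Theta$ whose boundary weight map is given by \eqref{boundary-weight-corner}) defines a map from hyper-maximal $q$-corners from $\phi$ to $\phi$ into hyper-maximal flow corners from $\alpha$ to $\alpha$. The converse direction of that same theorem shows this map is onto and its inverse is single-valued (the uniqueness statement for $\gamma$), so it is a bijection.

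Next, I would invoke Theorem~\ref{thm-corners} together with the theorem immediately following it (Theorem~4.60 of \cite{powers-CPflows}) to translate the level of flow corners into the level of local flow cocycles. Specifically, Theorem~\ref{thm-corners} applied with $n=1$ yields a bijection $\sigma \mapsto C$ between flow corners from $\alpha$ to $\alpha$ and contractive local flow $\alpha^d$-cocycles, via
\begin{equation*}
\sigma_t(A) = W^* C(t)\,\alpha^d_t(WAW^*)\,W.
\end{equation*}
The subsequent theorem then characterizes when such a $C$ is unitary for all $t \geq 0$, namely precisely when $\sigma$ is hyper-maximal. Hence this restricts to a bijection between hyper-maximal flow corners from $\alpha$ to $\alpha$ and local unitary flow $\alpha^d$-cocycles, which is by definition $G_{flow}(\alpha^d)$.

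Composing the two bijections produces the desired bijection between hyper-maximal $q$-corners from $\phi$ to $\phi$ and elements of $G_{flow}(\alpha^d)$. There is no substantive obstacle here: both bijections have been proved in the preceding material, and the only thing to verify is that they compose in a well-defined way, which is automatic since the intermediate set (hyper-maximal flow corners from $\alpha$ to $\alpha$) appears as the codomain of the first and as the domain of the second. The most technical ingredient has already been absorbed into Theorem~\ref{corner-correspondence}, whose proof depends on the boundary weight formula \eqref{boundary-weight-corner} and the explicit description \eqref{genrep4.6} of the generalized boundary representation; having this in hand, the corollary follows purely by assembling the bijections.
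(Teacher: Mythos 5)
Your proposal is correct and follows essentially the same route as the paper, which presents the corollary as an immediate consequence of Theorem~\ref{corner-correspondence} combined with the bijection between hyper-maximal flow corners and local unitary flow cocycles supplied by Theorem~\ref{thm-corners} and the theorem following it. The only cosmetic slip is that the cocycle attached to a single flow corner $\sigma$ from $\alpha$ to $\alpha$ arises by applying Theorem~\ref{thm-corners} to the $2\times 2$ matrix $\left(\begin{smallmatrix}\alpha & \sigma\\ \sigma^* & \alpha\end{smallmatrix}\right)$ rather than with $n=1$, but this does not affect the argument.
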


The following result is a combination of Theorems 3.8 and 3.9 of \cite{jankowski2}.

\begin{thm}\label{mainold}
Let $\{\mu_i\}_{i=1}^k$ and $\{r_i\}_{i=1}^{k'}$ be
non-increasing sequences of strictly positive numbers such that
$\sum_{i=1}^k \mu_k = \sum_{i=1}^{k'} r_i =1$. Define unital
$q$-positive maps $\phi: M_n(\C) \rightarrow M_n(\C)$ and $\psi:
M_{n'}(\C) \rightarrow M_{n'}(\C)$ (where $n \geq k$ and $n' \geq k'$) by
\begin{equation}\label{forms} \phi(A) =
\Big( \sum_{i=1}^k \mu_i a_{ii}\Big)I_n \ \ \textrm{ and }  \ \ \psi(D)
= \Big(\sum_{i=1}^{k'} r_i d_{ii}\Big)I_{n'}\end{equation} for all $A=(a_{ij}) \in
M_n(\C)$ and $D=(d_{ij}) \in M_{n'}(\C)$.  Let $\widetilde{\Omega} \in M_k(\C)$
be the diagonal matrix such that $\widetilde{\Omega}_{jj} = \mu_j$ for $j=1,\dots, k$.

If there is a nonzero $q$-corner from $\phi$ to $\psi$, then $k=k'$
and $\mu_i = r_i$ for all $i=1, \ldots, k$.  In that case, a linear
map $\gamma: M_{n,n'}(\C) \rightarrow M_{n,n'}(\C)$ is a $q$-corner
from $\phi$ to $\psi$ if and only if: for some unitary $V \in
M_{k}(\C)$ that commutes with $\widetilde{\Omega}$, some contraction $E \in
M_{n-k, n'-k}(\C)$, and some $\lambda \in \C$ with $|\lambda|^2 \leq
\re(\lambda)$, we have
\begin{displaymath}
\gamma \left(  \begin{array}{cc} B_{k,k} & W_{k, n'-k}
\\ Q_{n-k,k} & Y_{n-k, n'-k}     \end{array} \right)
= \lambda \ \tr(V^*  B_{k,k} \widetilde{\Omega})
\left(  \begin{array}{cc} V & 0_{k,n'-k} \\ 0_{n-k,k} & E
 \end{array} \right)
\end{displaymath}
for all
\begin{displaymath}
\left(  \begin{array}{cc} B_{k,k} & W_{k, n'-k}
\\ Q_{n-k,k} & Y_{n-k, n'-k}     \end{array} \right) \in M_{n,n'}(\C).
\end{displaymath}
A $q$-corner $\gamma: M_{n,n'}(\C)
\rightarrow M_{n,n'}(\C)$ from $\phi$ to $\psi$ is hyper-maximal if and only if
$n=n'$, $0<|\lambda|^2 = \re(\lambda)$, and $E$
is unitary.
\end{thm}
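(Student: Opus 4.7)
The plan is to exploit the idempotence $\phi^2=\phi$ and $\psi^2=\psi$, which holds for any unital rank-one completely positive map. Writing $\Theta=\begin{pmatrix}\phi&\gamma\\ \gamma^*&\psi\end{pmatrix}$, the action of $\Theta$ decouples into four independent block-linear maps, so $\Theta(I+t\Theta)^{-1}$ decouples as well. Idempotence gives $\phi(I+t\phi)^{-1}=\phi/(1+t)$ and likewise for $\psi$, while the off-diagonals contribute $\gamma(I+t\gamma)^{-1}$. After rescaling by $(1+t)$, the $q$-positivity of $\Theta$ is equivalent to the rescaled off-diagonal $\gamma_t:=(1+t)\gamma(I+t\gamma)^{-1}$ being a CP-corner from $\phi$ to $\psi$ for every $t\geq 0$.

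At $t=0$ this is just the CP-corner condition on $\gamma$. Passing to Choi matrices, a short computation gives $C(\phi)=\widetilde{\Omega}\otimes I_n$ and $C(\psi)=\widetilde{R}\otimes I_{n'}$, where $\widetilde{R}$ is the diagonal matrix with entries $r_1,\dots,r_{k'},0,\dots,0$; both Choi matrices are thus supported only in the first $k$ and $k'$ coordinates respectively. Schur-complement positivity of the full Choi matrix then forces $C(\gamma)$ to be rank one and supported in those same coordinates. Extracting this structure, and analyzing the Schur-complement inequality for the off-diagonal block, yields the parametrization
\begin{equation*}
\gamma\begin{pmatrix}B_{k,k}&W\\ Q&Y\end{pmatrix}=\lambda\,\tr(V^*B_{k,k}\widetilde{\Omega})\begin{pmatrix}V&0\\ 0&E\end{pmatrix}
\end{equation*}
together with the spectral matching $k=k'$ and $\mu_i=r_i$ (necessary for a nonzero corner), the unitarity of $V$ with $V\widetilde{\Omega}=\widetilde{\Omega}V$, and the contractivity of $E$.

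The full $q$-positivity for $t>0$ becomes easy once this structure is in hand. Using $V^*V=I_k$ and $\tr(\widetilde{\Omega})=1$, a direct calculation gives $\gamma^2=\lambda\gamma$, so $\gamma^m=\lambda^{m-1}\gamma$ for every $m\geq 1$, and summing the Neumann series yields $\gamma(I+t\gamma)^{-1}=\gamma/(1+t\lambda)$ and $\gamma_t=\frac{1+t}{1+t\lambda}\gamma$. The CP-corner condition for every $t\geq 0$ then reduces to a scalar inequality in $t$ that simplifies to $|\lambda|^2\leq\re(\lambda)$. For hyper-maximality, the available ``slack'' in the Schur complement of the full Choi matrix is quantified by $\re(\lambda)-|\lambda|^2$, by $I-E^*E$, and by any dimensional gap between the complementary blocks; a subordinate with the same off-diagonal exists unless all three vanish, yielding the conditions $|\lambda|^2=\re(\lambda)$, $E$ unitary, and $n=n'$.

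The main obstacle, in my view, is the rigidity step in the second paragraph: deducing from the bare CP-corner condition combined with the one-dimensional ranges of $\phi$ and $\psi$ that $\gamma$ must factor through a single trace functional and land in the block-diagonal target $\begin{pmatrix}V&0\\ 0&E\end{pmatrix}$, rather than in some more general rank-one pattern, and that the compatibility between the top-left (fixed by $\widetilde{\Omega}$) and complementary (free up to contractivity) blocks forces precisely this dichotomy. Once this structure is pinned down, the idempotence of $\phi$ and $\psi$ reduces every subsequent calculation, including the final inequality on $\lambda$ and the hyper-maximality clause, to scalar manipulations.
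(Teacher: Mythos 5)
First, a point of reference: the paper does not prove this theorem at all --- it is imported verbatim as ``a combination of Theorems 3.8 and 3.9 of \cite{jankowski2}'' --- so there is no in-paper argument to compare against; your proposal has to stand on its own. Several of its ingredients are correct: $\phi$ and $\psi$ are indeed idempotent, $\Theta(I+t\Theta)^{-1}$ does decouple blockwise, $q$-positivity of $\Theta$ is equivalent to $\gamma_t=(1+t)\gamma(I+t\gamma)^{-1}$ being a corner for every $t\geq 0$, and once the parametrized form is granted, the computation $\gamma^2=\lambda\gamma$, hence $\gamma_t=\frac{1+t}{1+t\lambda}\gamma$, correctly reduces the family of conditions to $|\lambda|^2\leq\re(\lambda)$.

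The genuine gap is the rigidity step, and it is not merely unexecuted but routed through a claim that is false. You assert that the $t=0$ condition alone (``this is just the CP-corner condition on $\gamma$''), analyzed via Choi matrices and Schur complements, forces the parametrization, so that the conditions for $t>0$ only contribute the scalar inequality on $\lambda$. That cannot work. Take $n=n'=2$, $k=k'=1$, $\mu_1=r_1=1$, so $\phi(A)=a_{11}I_2=\psi(A)$, and let $\gamma(B)=\kappa\, b_{11}e_{12}$. A direct check of the Choi matrix shows this is a corner from $\phi$ to $\psi$ whenever $|\kappa|\leq 1$, yet its target $e_{12}$ is not of the form $\bigl(\begin{smallmatrix}V&0\\0&E\end{smallmatrix}\bigr)$ with $V$ unitary; and since $\gamma^2=0$ here, $\gamma_t=(1+t)\gamma$, so it is a $q$-corner only when $\kappa=0$ --- consistent with the theorem, but fatal to your derivation, since the $t=0$ Choi/Schur-complement analysis admits it. For the same reason your parenthetical that $k=k'$ and $\mu_i=r_i$ are ``necessary for a nonzero corner'' is false: nonzero corners exist between mismatched $\phi$ and $\psi$ (e.g.\ $\gamma(B)=\kappa b_{11}e_{11}$ for small $\kappa$ when $\mu\neq r$); the matching is forced only by the $q$-corner condition. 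So the entire structural content of the theorem --- the elementary-tensor form of the off-diagonal Choi block, the unitarity of $V$, its commutation with $\widetilde{\Omega}$, and the spectral matching --- must be extracted from the one-parameter family $\{\gamma_t\}_{t\geq 0}$ of corner conditions simultaneously (e.g.\ from the behavior as $t\to\infty$, where $(1+t)\phi(I+t\phi)^{-1}=\phi$ stays fixed while any component of $\gamma$ killed or contracted by iteration blows up or must vanish), and your proposal supplies no mechanism for this. The hyper-maximality clause has the same defect: $q$-subordination of the diagonal entries is again a condition for all $t$, not a Schur-complement ``slack'' at $t=0$.
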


  Note that  $0 < |\lambda|^2 = \re(\lambda)$ if and only if $\lambda=\frac{1}{1+ix}$ where $x
\in \rr$.

\begin{defn}
Let $\rho \in M_n(\C)^*$ be a  state with trace density matrix $\Omega$. Let $U_\rho$ denote the set
of all unitaries $U\in M_n(\C)$ such that $U\Omega=\Omega U$, and let $U_\rho/\mathbb{T}$ denote the
group obtained by the identification $X \sim Y$ if and only if $X=cY$ where $c\in\C$ with $|c|=1$.
Let $G_\rho$ be the group $$G_\rho = \rr \times (U_\rho/\mathbb{T})$$ with the coordinate-wise product. Each
element $g \in G_\rho$ can be represented by a pair $(x,X) \in \rr \times U_\rho$, and we denote
this relationship by $g=\{x, X\}$. Using this notation,
$$
\{x, X\} \cdot \{y, Y \} = \{ x+y, XY \}.
$$

\end{defn}

  We record the following useful consequence of Theorem~\ref{mainold}.

\begin{thm}\label{rho-q-corners}
Let $\phi: M_n(\C) \rightarrow M_n(\C)$ be a rank one unital $q$-positive map, so $\phi(A)=\rho(A)
I$
for some state $\rho$ with trace density matrix $\Omega$.  Suppose that
$\{x,X\} \in G_\rho$. Then the map
$$
\gamma_{\{x, X\}} (A) = \frac{1}{1+ix} \tr(X^* A \Omega)X
$$
is a well-defined hyper-maximal $q$-corner from $\phi$ to $\phi$. Conversely, if $\gamma$ is
a hyper-maximal $q$-corner from $\phi$ to $\phi$, then there exists $\{x,X\}\in G_\rho$ such that
$\gamma = \gamma_{\{x, X\}}$.

Furthermore, if $g,h \in G_\rho$ and $\gamma_g=\gamma_h$, then $g=h$.
\end{thm}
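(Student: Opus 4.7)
The plan is to deduce this from Theorem~\ref{mainold} after a change of basis. Choose a unitary $W\in M_n(\cc)$ such that $W^*\Omega W = \widetilde\Omega\oplus 0$ is diagonal, where $\widetilde\Omega\in M_k(\cc)$ has strictly positive non-increasing diagonal entries $\mu_1,\dots,\mu_k$ summing to $1$. Then $\phi_W(A)=W^*\phi(WAW^*)W=\big(\sum_{i=1}^k \mu_i a_{ii}\big)I_n$ has exactly the form treated in Theorem~\ref{mainold}. The assignment $\gamma\mapsto \gamma^W$ with $\gamma^W(A)=W^*\gamma(WAW^*)W$ is a bijection between hyper-maximal $q$-corners from $\phi$ to $\phi$ and from $\phi_W$ to $\phi_W$, and $X\mapsto W^*XW$ is a bijection $U_\rho\to U_{\rho\circ \Ad_W}$ respecting the quotient by $\mathbb T$. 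A direct check shows $(\gamma_{\{x,X\}})^W=\gamma_{\{x,\,W^*XW\}}$, so it suffices to prove the theorem in the diagonal case.

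With $\Omega=\widetilde\Omega\oplus 0$, any unitary $X\in U_\rho$ must commute with the support projection $P=I_k\oplus 0$ of $\Omega$ (since $P$ is a spectral projection of $\Omega$), hence $X=V\oplus E$ where $V\in M_k(\cc)$ is a unitary commuting with $\widetilde\Omega$ and $E\in M_{n-k}(\cc)$ is an arbitrary unitary. A brief computation using the block form of $\Omega$ shows $\tr(X^*A\Omega)=\tr(V^*A_{k,k}\widetilde\Omega)$ for every $A\in M_n(\cc)$, so
$$
\gamma_{\{x,X\}}(A)=\frac{1}{1+ix}\,\tr(V^*A_{k,k}\widetilde\Omega)\,(V\oplus E).
$$
Setting $\lambda=1/(1+ix)$ one has $|\lambda|^2=\re(\lambda)=1/(1+x^2)>0$; conversely, the map $x\mapsto 1/(1+ix)$ is a bijection from $\rr$ onto $\{\lambda\in\cc: |\lambda|^2=\re(\lambda)\}\setminus\{0\}$. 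Therefore the formula for $\gamma_{\{x,X\}}$ matches precisely the characterization of hyper-maximal $q$-corners from $\phi_W$ to $\phi_W$ given in Theorem~\ref{mainold}. This delivers both the existence of the $q$-corner for each $\{x,X\}\in G_\rho$ and the converse that every hyper-maximal $q$-corner arises in this way.

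For well-definedness of $\gamma_{\{x,X\}}$, I verify $\gamma_{\{x,cX\}}=\gamma_{\{x,X\}}$ when $|c|=1$: the scalar factor contributes $\bar c\cdot c=1$. For injectivity, suppose $\gamma_{\{x,X\}}=\gamma_{\{y,Y\}}$. Evaluating at $A=X$, and using $X^*X=I$ together with $\tr(\Omega)=1$, yields $(1+ix)^{-1}X=(1+iy)^{-1}\tr(Y^*X\Omega)\,Y$, so $X\in\cc Y$; since both are unitary, $X=cY$ for some $c\in\mathbb T$. Substituting back gives $\tr(Y^*X\Omega)=c\,\tr(\Omega)=c$, reducing the equation to $(1+ix)^{-1}=(1+iy)^{-1}$, hence $x=y$ and $\{x,X\}=\{y,Y\}$ in $G_\rho$. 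The main work is notational: once one translates the intrinsic description via $G_\rho$ into the coordinatized form of Theorem~\ref{mainold}, the existence and uniqueness statements fall out directly.
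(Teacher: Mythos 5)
Your proposal is correct and follows essentially the same route as the paper: diagonalize $\Omega$ by a unitary conjugation, observe that unitaries commuting with $\widetilde\Omega\oplus 0$ have the block form $V\oplus E$, reduce to the coordinatized characterization of Theorem~\ref{mainold} via $\lambda = 1/(1+ix)$, and deduce injectivity from the fact that $\gamma_{\{x,X\}}$ always takes values in $\cc X$. The only cosmetic difference is that the paper cites Proposition 4.5 of \cite{jankowski1} and Remark 3.3 of \cite{jankowski2} for the conjugation bijection on hyper-maximal $q$-corners, which you assert without reference.
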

\begin{proof}
We observe that if  $(x,X),(y,Y) \in \rr \times U_\rho$ are two representatives for an element of
$G_\rho$, then $x=y$ and $X=cY$ for $c\in\cc$ with $|c|=1$, hence $\gamma_{\{x, X\}}=\gamma_{\{y,
Y\}}$. Therefore this is a well-defined map parametrized by an element of $G_\rho$.

There exists a unitary $U \in M_n(\C)$ such that $\Omega_U=U^*\Omega U$ is diagonal
with nonzero non-increasing diagonal entries $\mu_1, \dots, \mu_k$ for some $k\leq n$. Let
us denote by $\rho_U(A)=\tr(A\Omega_U)$, and let $\phi_U$ be the $q$-positive unital map given by
$\phi_U(A)=\rho_U(A) I$.  By Proposition 4.5 of \cite{jankowski1} and
Remark 3.3 of \cite{jankowski2}, we
know that $\gamma$ is a hyper-maximal $q$-corner from $\phi$ to $\phi$ if and only if
$\gamma(A)= U \sigma(U^*AU) U^*$ where $\sigma$ is a hyper-maximal $q$-corner from $\phi_U$ to
$\phi_U$.

Let $\widetilde{\Omega} \in M_k(\C)$ be the diagonal matrix such that
$\widetilde{\Omega}_{jj}=\mu_j$ for $j=1, \dots, k$.
A straightforward calculation shows that a unitary matrix $Z \in M_n(\C)$ commutes with $\Omega_U$
if and only if it has the form
\begin{displaymath}
Z = \left( \begin{array}{cc} V & 0_{k, n-k}
\\ 0_{n-k, k} & E  \end{array}  \right),
\end{displaymath}
where $V \in M_k(\C)$ and $E \in M_{n-k}(\C)$ are unitary matrices and $V$ commutes with
$\widetilde{\Omega}$.
Furthermore, if
\begin{displaymath}
A= \left(  \begin{array}{cc} B_{k,k} & W_{k, n'-k}
\\ Q_{n-k,k} & Y_{n-k, n'-k}     \end{array} \right) \in M_n(\C),
\end{displaymath}
then $\tr(Z^* A \Omega_U) = \tr(V^* B \widetilde{\Omega})$.  It follows from Theorem~\ref{mainold}
that $\sigma$ is a hyper-maximal $q$-corner from $\phi_U$ to $\phi_U$
if and only if it has the form
$\sigma(A)= \frac{1}{1+ix} \tr(Z^* A \Omega_U) Z$ for all $A \in M_n(\C)$, where $x\in\rr$ and $Z
\in M_n(\C)$ is a unitary
matrix that commutes with $\Omega_U$.  Thus we have that
\begin{align*}
\gamma(A) & =  U \sigma(U^*AU) U^*  = \frac{1}{1+ix} \tr(Z^*(U^*AU)\Omega_U) UZU^*
= \frac{1}{1+ix} \tr(Z^*U^*A\Omega U) UZU^* \\
&  = \frac{1}{1+ix} \tr(X^*A\Omega) X = \gamma_{\{x,X\}}(A)
\end{align*}
where $X=UZU^*$. It is clear that $X$ commutes with $\Omega$ since $Z$ commutes with $\Omega_U$,
hence $(x,X)$ represents an element of $G_\rho$.

The uniqueness statement is clear, once one observes that since $\gamma_{\{x,X\}}(A)$ is always a
multiple of $X$, if  $\gamma_{\{x,X\}} = \gamma_{\{y,Y\}}$ then  $X$ and $Y$ are unitaries which
must be multiples of each other.
\end{proof}

Let $\phi(A)=\rho(A)I$ where $\rho \in M_n(\cc)^*$ is a state, and let $\nu$ be a type II Powers
weight of the form $\pure$. By Corollary \ref{bijection-between-corners}
and Theorem~\ref{rho-q-corners}, we
have a bijection relating each element $g \in G_\rho$ to a hyper-maximal $q$-corner $\gamma_g$ and
its corresponding  local unitary flow $\alpha^d$-cocycle which we denote by $C_g$.
Let $g,h \in G_\rho$.  Since the product of local unitary flow cocycles is also a local unitary flow cocycle,
it follows that $C_g \cdot C_h = C_s$ for some $s \in G_\rho$. We will prove that $s=gh$.  The
following simple lemma will prove useful in doing so.

\begin{lem}\label{special}
Let $\iota: M_3(\C) \to M_3(M_{n}(\C))$ be the natural inclusion given by $[\iota(A)]_{ij}=a_{ij}
I_n$.  Let $X,Y \in M_n(\C)$ be unitary, and let $V \in M_3(M_{n}(\C))$ be the unitary matrix given by
$$
V = \begin{pmatrix}
Y & 0 & 0 \\
0 & I & 0 \\
0 & 0 & XY
\end{pmatrix}.
$$
Then a linear map $L :M_3(M_n(\C)) \to M_3(\C)$ is completely positive if and only if the map $\phi: M_3(M_n(\C)) \to
M_3(M_{n}(\C))$ given by $\phi(A) = V\iota(L(A))V^*$ is completely positive.
\end{lem}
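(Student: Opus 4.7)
The plan is to factor $\phi$ as $\phi = \Ad_V \circ \iota \circ L$, where $\Ad_V(X) = VXV^*$. Since $V$ is unitary, $\Ad_V$ is a $*$-automorphism of $M_3(M_n(\C))$ with completely positive inverse $\Ad_{V^*}$, so it preserves complete positivity in both directions. Consequently, $\phi$ is completely positive if and only if $\iota \circ L$ is completely positive, and the problem reduces to showing that $\iota \circ L$ is CP if and only if $L$ itself is CP.

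For the forward direction, I would observe that $\iota$ is a unital $*$-homomorphism---it is the embedding $A \mapsto A \otimes I_n$ under the natural identification $M_3(M_n(\C)) = M_3(\C) \otimes M_n(\C)$---and is hence completely positive. A composition of CP maps is CP, so if $L$ is CP then $\iota \circ L$ is CP.

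For the converse, the key step is to produce a completely positive left inverse to $\iota$. Any state $\omega$ on $M_n(\C)$ (for instance, $\omega(Y) = Y_{11}$) yields the slice map $E = \id_{M_3(\C)} \otimes \omega : M_3(M_n(\C)) \to M_3(\C)$, explicitly $[E(B)]_{ij} = \omega(B_{ij})$. This map is CP and satisfies $E \circ \iota = \id_{M_3(\C)}$, so if $\iota \circ L$ is CP, then $L = E \circ (\iota \circ L)$ is CP as a composition of CP maps.

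Since each step is entirely routine, I do not anticipate any substantive obstacle; the content of the lemma is essentially the observation that $\iota$ is a unital $*$-homomorphism admitting a CP left inverse, together with the invariance of complete positivity under conjugation by the unitary $V$.
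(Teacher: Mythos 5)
Your proof is correct and follows essentially the same route as the paper: reduce to $\iota\circ L$ via conjugation by the unitary $V$, then use that $\iota$ is a $*$-isomorphism onto its range. The only difference is cosmetic — where the paper simply cites that $\iota$ is a $*$-isomorphism onto its range (so the inverse on the range is completely positive), you justify the converse direction by exhibiting an explicit completely positive left inverse via a slice map, which is a perfectly valid way to make the same point.
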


\begin{proof}  It is clear that $\phi$ is completely positive if and only if $\iota \circ L$ is
completely positive. On the other hand, $\iota$ is a *-isomorphism onto its range, therefore $\iota
\circ L$ is completely positive if and only if $L$ is completely positive.
\end{proof}

\begin{remark}\label{simple-lemma}
We will use the lemma in the special case when $L: M_3(M_n(\C)) \to M_3(\C)$ is of the form
$$
[L(A)]_{ij} = \ell_{ij}(A_{ij}),
$$
where $\ell_{ij} \in M_n(\C)^*$ for all $i,j=1, 2,3$. In this case,
$$
\phi(A) =  \left( \begin{array}{ccc} \ell_{11}
(A_{11}) I & \ell_{12}(A_{12}) Y & \ell_{13}(A_{13})X^* \\
\ell_{21}(A_{21}) Y^*
& \ell_{22}(A_{22}) I & \ell_{23}(A_{23}) Y^*X^* \\
\ell_{31}(A_{31})X & \ell_{32}(A_{32}) XY & \ell_{33}(A_{33}) I
\end{array} \right).
$$

\end{remark}

We will also use the following lemma (which appears in \cite{walter} and which is also a special case of Lemma 2.16 of \cite{alevras-powers-price}).

\begin{lem}\label{APP-2.16}
Suppose $K$ is a Hilbert space and $T \in M_3(B(K))$ has the form
$$T=
\begin{pmatrix}
I  & Y & X^*
\\ Y^* & I & Z^*
\\ X & Z & I
\end{pmatrix},
$$
where $X$ and $Y$ are unitary.  Then $T$ is positive if and only if $Z=XY$.
\end{lem}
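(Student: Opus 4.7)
The plan is to prove the equivalence by handling each direction separately; the forward direction is the substantive one, while the backward direction reduces to exhibiting $T$ as a Gram operator.

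For the ($\Leftarrow$) direction, suppose $Z = XY$ and define the column operator $U : K \to K \oplus K \oplus K$ by
$$U = \begin{pmatrix} I \\ Y^* \\ X \end{pmatrix},$$
regarded as a $3 \times 1$ block matrix. Using $Y^*Y = YY^* = I$ and $XX^* = X^*X = I$ (since $X$, $Y$ are unitary) together with the assumption $Z = XY$, an entrywise computation yields $UU^* = T$. Hence $T \geq 0$.

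For the ($\Rightarrow$) direction, assume $T \geq 0$. The key observation is that, because $Y$ is unitary, the principal $2 \times 2$ block
$$\begin{pmatrix} I & Y \\ Y^* & I \end{pmatrix}$$
is positive but degenerate, with kernel containing every vector of the form $(-Y\xi, \xi)$ for $\xi \in K$. Testing positivity of $T$ against vectors of the form $\eta = (-Y\xi, \xi, \eta_3) \in K^3$, a direct computation (in which the contributions coming from the first two block rows collapse because $(-Y\xi, \xi)$ lies in the kernel of the top block) gives
$$\langle T\eta, \eta \rangle = \|\eta_3\|^2 + 2\,\mathrm{Re}\langle (Z - XY)\xi,\, \eta_3 \rangle.$$
Substituting $\eta_3 = -t(Z-XY)\xi$ for $t \in (0,2)$ converts the right-hand side into $(t^2 - 2t)\|(Z-XY)\xi\|^2$, which is strictly negative unless $(Z-XY)\xi = 0$. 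Since this must hold for every $\xi \in K$, we conclude $Z = XY$.

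The principal obstacle is choosing the right family of test vectors for the forward direction. Once one notices that the unitarity of $Y$ makes the $(1,2)$ block a degenerate corner, the kernel vectors $(-Y\xi, \xi)$ present themselves naturally; the free third coordinate $\eta_3$ then probes the $(1,3)$ and $(2,3)$ entries, and positivity of $T$ collapses to the single constraint $Z = XY$.
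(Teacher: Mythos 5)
Your proof is correct, and both directions check out: the Gram factorization $UU^{*}=T$ with $U=(I,\,Y^{*},\,X)^{T}$ does give exactly the matrix $T$ when $Z=XY$ (the $(2,3)$ entry being $Y^{*}X^{*}=(XY)^{*}=Z^{*}$), and in the forward direction the computation $\langle T\eta,\eta\rangle=\|\eta_{3}\|^{2}+2\re\langle (Z-XY)\xi,\eta_{3}\rangle$ for $\eta=(-Y\xi,\xi,\eta_{3})$ is right, so the choice $\eta_{3}=-t(Z-XY)\xi$ with, say, $t=1$ forces $(Z-XY)\xi=0$ for every $\xi$. Note, however, that the paper does not prove this lemma at all: it is quoted with a citation to Walter's paper and to Lemma~2.16 of Alevras--Powers--Price, where it appears as a special case of a general criterion for positivity of $3\times 3$ operator matrices with contractive off-diagonal entries (there the argument runs through the description of positive $2\times 2$ corners and Schur-complement-type estimates, yielding in the scalar case the inequality $|z-xy|\le (1-|x|^{2})^{1/2}(1-|y|^{2})^{1/2}$, which degenerates to $z=xy$ when $x,y$ are unitary). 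Your argument is a clean, self-contained alternative that isolates exactly the special case needed: the key idea of testing against the kernel vectors $(-Y\xi,\xi)$ of the degenerate upper-left corner is precisely what makes the general machinery unnecessary here, at the cost of not recovering the more general contractive statement.
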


We are now ready to prove the main result of the section.

\begin{thm}\label{gauge-groups-final}  Let $\nu$ be
a type II Powers weight of the form $$\pure,$$ and let $\phi: M_n(\C) \rightarrow M_n(\C)$ be a
unital
rank one $q$-positive map, so for some state $\rho$ we have
$\phi(A) = \rho(A)I$ for all $A \in M_n(\C)$. Let $\alpha^d$ be the minimal flow dilation
of the CP-flow $\alpha$ induced by the boundary weight double $(\phi, \nu)$. Then the map
$g \mapsto C_g$ is an isomorphism from $G_\rho$ onto $G_{flow}(\alpha^d)$, thus $G(\alpha^d) \simeq
\rr \times G_\rho$.
\end{thm}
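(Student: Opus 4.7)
The plan is to reduce everything to the identity $C_g \cdot C_h = C_{gh}$ and read off the rest. The map $g \mapsto C_g$ is automatically bijective as the composition of the bijection in Theorem~\ref{rho-q-corners} (between $G_\rho$ and hyper-maximal $q$-corners from $\phi$ to $\phi$) with the bijection in Corollary~\ref{bijection-between-corners}. Since the pointwise product of two local unitary flow cocycles is again one, $C_g \cdot C_h = C_s$ for a unique $s = \{z,Z\} \in G_\rho$, and the entire content lies in showing that this $s$ equals $gh = \{x+y, XY\}$. Once this is done, the canonical decomposition $G(\alpha^d) \simeq \rr \times G_{flow}(\alpha^d)$ recalled after Definition~\ref{gflow} immediately yields $G(\alpha^d) \simeq \rr \times G_\rho$.

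The strategy is to package $C_g, C_h$, and $C_s$ into a positive $3 \times 3$ matrix of cocycles and translate it back through the $q$-corner dictionary. Since $C_g(t), C_h(t), C_s(t)$ are unitary and $C_s(t) = C_g(t)C_h(t)$, Lemma~\ref{APP-2.16} gives that
$$
T(t) = \begin{pmatrix} I & C_g(t) & C_s(t) \\ C_g(t)^* & I & C_h(t) \\ C_s(t)^* & C_h(t)^* & I \end{pmatrix}
$$
is positive for every $t \geq 0$, hence is a positive $3 \times 3$ matrix of local unitary flow $\alpha^d$-cocycles. By Theorem~\ref{thm-corners} it corresponds to a CP-flow $\Theta$ over $\cc^n \oplus \cc^n \oplus \cc^n$ whose diagonal blocks are $\alpha$ and whose off-diagonal flow corners are precisely the ones associated to $C_g, C_h, C_s$. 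Combining Propositions~\ref{gen-schur-flow-weight} and~\ref{submatrix-flow-corners} with Theorem~\ref{corner-correspondence} applied to each pair of indices, the boundary weight map of $\Theta$ has the form $\omega(\rho)(A) = \rho(\Phi(\Omega_\nu(A)))$, where $\Phi: M_{3n}(\cc) \to M_{3n}(\cc)$ is the block map with $\phi$ on the diagonal and $\gamma_g, \gamma_h, \gamma_s$ (and their $*$-adjoints) as the off-diagonal entries. Since $\omega$ induces a CP-flow, $\Phi$ is forced to be $q$-positive.

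What remains is to extract $s = gh$ from the $q$-positivity of $\Phi$. A direct computation using $\phi^2 = \phi$ together with the easy identity $\gamma_{\{r,R\}}^2 = (1+ir)^{-1}\gamma_{\{r,R\}}$ shows that $\Phi(I+t\Phi)^{-1}$ has diagonal $(1+t)^{-1}\phi$ and $(i,j)$-off-diagonal $A \mapsto (1+i\theta_{ij}+t)^{-1}\tr(X_{ij}^*A\Omega)X_{ij}$, where $(X_{12},X_{13},X_{23}) = (X,Z,Y)$ and $(\theta_{12},\theta_{13},\theta_{23}) = (x,z,y)$. Applied to $J \otimes vv^*$ for $J$ the $3\times 3$ all-ones matrix and $v \in \cc^n$ satisfying $v^*\Omega v > 0$, and then compressed by scalar multiples of $v$, the $q$-positivity of $\Phi$ becomes the positivity for every $t \geq 0$ of the $3 \times 3$ Hermitian matrix with diagonal $(1+t)^{-1}$ and off-diagonals $r_{ij}/(1+i\theta_{ij}+t)$, where $r_{ij} = |v^*X_{ij}v|^2$. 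Clearing denominators, the leading $t^6$-coefficient of the determinant works out to $1 - r_{12}^2 - r_{13}^2 - r_{23}^2 + 2r_{12}r_{13}r_{23}$; choosing $v$ to approximate a common eigenvector of $X$ and $Y$ drives $r_{12}, r_{23} \to 1$ and this expression to $-(1-r_{13})^2$, forcing $r_{13} = 1$. This means $|v^*Zv| = 1$ on every common eigenvector of $X$ and $Y$, which upgrades to $Z = \zeta XY$ for some $\zeta \in \mathbb{T}$, so $Z \equiv XY$ in $U_\rho/\mathbb{T}$. Substituting this back and analyzing the residual determinant essentially as in the scalar case yields $|xyz| \geq (1+t)^2|x+y-z|$ for all $t \geq 0$, forcing $z = x+y$. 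The main obstacle will be the upgrade from ``$|v^*Zv|=1$ on common eigenvectors of $X,Y$'' to the global relation $Z = \zeta XY$: when $\Omega$ has an eigenspace of dimension greater than one, elements of $U_\rho$ restricted there may fail to share any exact common eigenvectors, so one will need either a perturbation/continuity argument or a structural application of Theorem~\ref{mainold} in the spirit of Lemma~\ref{special} and Remark~\ref{simple-lemma}.
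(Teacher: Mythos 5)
Your skeleton---reducing everything to $C_g\cdot C_h=C_{gh}$, packaging three cocycles into a positive $3\times3$ matrix, moving between cocycles and $q$-corners via Theorems~\ref{thm-corners} and~\ref{corner-correspondence} with Proposition~\ref{submatrix-flow-corners}, and invoking Lemma~\ref{APP-2.16}---coincides with the paper's, but you run the argument in the reverse direction, and the reversal opens a gap you do not close. Starting from $C_gC_h=C_s$ with $s=\{z,Z\}$ unknown, you descend to the $q$-positivity of the $3\times3$ block map $\Phi$ containing $\gamma_g,\gamma_h,\gamma_s$, and you then need a rigidity statement: $q$-positivity of $\Phi$ forces $z=x+y$ and $Z\equiv XY$ in $U_\rho/\mathbb{T}$. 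Your proof of the second half fails. The compression only yields the scalar matrix with entries $r_{ij}/(1+i\theta_{ij}+t)$ when $v$ is an eigenvector of $\Omega$, and inside a single eigenspace of $\Omega$ the restrictions of $X,Y\in U_\rho$ are arbitrary unitaries with, in general, no common eigenvector---not even approximately. For $X=\mathrm{diag}(1,-1)$ and $Y=\left(\begin{smallmatrix}0&1\\1&0\end{smallmatrix}\right)$ one has $|v^*Xv|^2+|v^*Yv|^2\le(|a|^2-|b|^2)^2+4|a|^2|b|^2=1$ for every unit vector $v=(a,b)$, so $r_{12}$ and $r_{23}$ can never be driven to $1$ simultaneously and the leading-coefficient inequality gives no information about $r_{13}$. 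Your closing sentence concedes exactly this, and neither of the suggested fixes (perturbation, or ``a structural application of Theorem~\ref{mainold}'') is carried out; the scalar step forcing $z=x+y$ is recoverable, but the proof as written does not establish $Z\equiv XY$.

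The paper sidesteps the rigidity problem by guessing the answer and verifying it: it writes down the explicit $3\times3$ block map with $\gamma_{\{x+y,XY\}}$ in the $(1,3)$ slot, proves its $q$-positivity directly (Lemma~\ref{special} and Remark~\ref{simple-lemma} reduce this to positivity of the scalar matrix $N_t$, which is recognized as $\sigma(I+t\sigma)^{-1}$ applied to the all-ones matrix for the $q$-positive Schur map $\sigma$ with $\lambda_1-\lambda_2=-y$, $\lambda_1-\lambda_3=x$, $\lambda_2-\lambda_3=x+y$), passes to the associated positive $3\times3$ matrix of local flow cocycles, and only then applies Lemma~\ref{APP-2.16}---at the cocycle level, where the off-diagonal entries are genuine unitaries and the identity $C_{\{x+y,XY\}}(t)=C_{\{x,X\}}(t)C_{\{y,Y\}}(t)$ drops out at once. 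If you wish to keep your framework, the cleanest repair is the same move: establish independently that the block map with $\gamma_{gh}$ in the corner is $q$-positive, deduce $C_{gh}=C_gC_h=C_s$ from Lemma~\ref{APP-2.16}, and conclude $s=gh$ from the injectivity of $g\mapsto C_g$; the determinant analysis can then be discarded entirely.
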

\begin{proof} Recall that since $\alpha^d$ is type II$_0$, we have that $G(\alpha^d)$ is canonically isomorphic to $\rr \times
G_{flow}(\alpha^d)$ (for more details see the discussion following Definition~\ref{gflow}). Furthermore, it follows from Theorem~\ref{rho-q-corners} that the map $g \to
C_g$ described in the statement of the current theorem is well-defined, injective and surjective.
Thus, in order to complete the proof of the theorem it suffices to prove that it preserves
multiplication.

Let $\{x, X\}, \{y, Y\} \in G_\rho$ be given, and let $\Omega$ be the trace density matrix for
$\rho$. Then
$$
\gamma_{\{x,X\}}(A) = \frac{\tr(X^* A\Omega)}{1+ix}X, \qquad \gamma_{\{y,Y\}}(A) = \frac{\tr(Y^*A
\Omega)}{1+iy} Y
$$
for all $A \in M_n(\C)$.  For each $S \in M_n(\C)$, define $\tau_S\in M_n(\C)^*$ by $\tau_S(A) =
\tr(S\Omega^{1/2} A \Omega^{1/2})$, so that for example $\tau_I=\rho$.  Given $A \in M_{3n}(\C),$ we
write
$A=(A_{ij}) \in M_3(M_n(\C))$.  Let $\Theta: M_{3n}(\C) \rightarrow
M_{3n}(\C)$ be the map
\begin{displaymath}
\Theta \left( \begin{array}{ccc}  A_{11} & A_{12} & A_{13}\\
A_{21} & A_{22} & A_{23} \\ A_{31} & A_{32} & A_{33}
\end{array} \right)
=
\left( \begin{array}{ccc}  \phi(A_{11}) & \gamma_{\{y,Y\}} (A_{12}) & \gamma_{\{x,X\}}^*
(A_{13})\\
\gamma _{\{y,Y\}}^*(A_{21}) & \phi(A_{22}) & \gamma_{\{x+y, XY\}}^* (A_{23}) \\
\gamma_{\{x,X\}}(A_{31}) & \gamma_{\{x+y, XY\}} (A_{32}) &
\phi(A_{33})
\end{array} \right).
\end{displaymath}
For each $t \geq 0$ and $A \in M_{3n}(\C)$,
$$
\Theta(I +t \Theta)^{-1}(A) =
\begin{pmatrix}
\dfrac{\tau_I(A_{11})}{1+t} I  &
\dfrac{\tau_Y^* (A_{12})}{1+t+iy}  Y &
\dfrac{\tau_X (A_{13})}{1+t-ix} X^* \\ \noalign{\bigskip}
\dfrac{\tau_Y (A_{21})}{1+t-iy}Y^* & \dfrac{\tau_I(A_{22})}{1+t}I &
\dfrac{\tau_{XY} (A_{23})}{1+t-ix-iy}  Y^*X^* \\ \noalign{\bigskip}
\dfrac{\tau_X ^*(A_{31})}{1+t+ix}X & \dfrac{\tau_{XY}^*
(A_{32})}{1+t+ix+iy}XY & \dfrac{\tau_I(A_{33})}{1+t} I
\end{pmatrix}.
$$

By Lemma~\ref{special} and Remark~\ref{simple-lemma}, $\Theta$ is $q$-positive if and only if the
following maps $B_t: M_{3n}(\C) \rightarrow M_3(\C)$ are completely
positive for all $t \geq 0$:
$$
B_t(A)=
\begin{pmatrix}

\dfrac{\tau_I(A_{11})}{1+t}   &
\dfrac{\tau_Y^* (A_{12})}{1+t+iy}   &
\dfrac{\tau_X (A_{13})}{1+t-ix}  \\ \noalign{\bigskip}
\dfrac{\tau_Y (A_{21})}{1+t-iy} & \dfrac{\tau_I(A_{22})}{1+t} &
\dfrac{\tau_{XY} (A_{23})}{1+t-ix-iy}   \\ \noalign{\bigskip}
\dfrac{\tau_X ^*(A_{31})}{1+t+ix} & \dfrac{\tau_{XY}^*
(A_{32})}{1+t+ix+iy} & \dfrac{\tau_I(A_{33})}{1+t}
\end{pmatrix}.
$$
Let $Z=(Z_{ij}) \in M_3(M_n(\cc))$. We remark that if the  matrix $(Z_{ji})$ is positive (the
transposition of the indices is not a mistake), then the map $M_3(M_n(\cc))\to M_3(\cc)$ given by
$(A_{ij}) \mapsto (\tau_{Z_{ij}}(A_{ij}))$ is completely positive. For more details, see the
discussion preceding Lemma 2.17 in \cite{alevras-powers-price}. Thus $B_t$ is completely positive if

$$
M_t= \left(
\begin{array}{ccc} \frac{1}{1+t}I & \frac{1}{1+t-iy} Y &
\frac{1}{1+t+ix} X^*  \\
\frac{1}{1+t+iy}Y^*  & \frac{1}{1+t}I & \frac{1}{1+t+ix+iy} Y^*X^* \\
\frac{1}{1+t-ix}X & \frac{1}{1+t-ix-iy} XY & \frac{1}{1+t}I
\end{array} \right)
$$
is a  positive matrix. On the other hand, if
$$
V = \begin{pmatrix}
Y & 0 & 0 \\
0 & I & 0 \\
0 & 0 & XY
\end{pmatrix} \quad \text{and} \quad
N_t= \left(
\begin{array}{ccc} \frac{1}{1+t}  & \frac{1}{1+t-iy} &
\frac{1}{1+t+ix}   \\
\frac{1}{1+t+iy}  & \frac{1}{1+t} & \frac{1}{1+t+ix+iy} \\
\frac{1}{1+t-ix} & \frac{1}{1+t-ix-iy} & \frac{1}{1+t}
\end{array} \right),
$$
then $M_t=V^*\iota(N_t)V$ (where $\iota$ is the natural inclusion defined in Lemma~\ref{special}).
Therefore, $M_t$ is positive if and only $N_t$ is positive.

To see that $N_t$ is positive, let $\lambda_1 = \frac{x-y}{3}, \lambda_2 = \frac{x+2y}{3},$ and
$\lambda_3 = \frac{-2x-y}{3}$, and define
$\sigma: M_3(\C) \rightarrow M_3(\C)$ by
$$
[\sigma(A)]_{jk} = \left\{ \begin{array}{cc}
\dfrac{a_{jk}}{1+i(\lambda_j - \lambda_k)} & \textrm{if } j<k \\ \noalign{\medskip}
a_{jk} & \textrm{if } j=k \\  \noalign{\medskip}
\dfrac{a_{jk}}{1-i(\lambda_j - \lambda_k)} & \textrm{if } j>k
\end{array} \right.
$$
for all $A \in M_3(\C)$.  We have that $\sigma$ is $q$-positive by Theorem 6.11 of
\cite{jankowski1}, so
\begin{displaymath}
0 \leq \sigma(I + t\sigma)^{-1} \left( \begin{array}{ccc} 1 & 1 & 1\\ 1 & 1 & 1\\ 1
& 1 & 1 \end{array}
\right)
=N_t.
\end{displaymath}
This shows that $B_t$ is completely positive for all $t \geq 0$, so $\Theta$ is $q$-positive. Now
observe that $\Theta(I)=I$, so by Proposition~\ref{bdryweight}, the boundary weight
double $(\Theta, \nu)$
induces a unital CP-flow $\beta$ through the boundary weight map $\omega: B(\C^{3n})_*
\rightarrow \fa(\C^{3n} \otimes L^2(0, \infty))$ below:
$$\omega(\rho)(A)= \rho
\begin{pmatrix} \phi(\Omega_\nu (A_{11})) & \gamma_{\{y,Y\}} (\Omega_\nu(A_{12})) &
\gamma_{\{x,X\}}^* (\Omega_\nu(A_{13}))\\
\gamma _{\{y,Y\}}^*(\Omega_\nu(A_{21})) & \phi(\Omega_\nu(A_{22})) & \gamma_{\{x+y, XY\}}^*
(\Omega_\nu(A_{23})) \\
\gamma_{\{x,X\}}(\Omega_\nu(A_{31})) & \gamma_{\{x+y, XY\}} (\Omega_\nu(A_{32})) &
\phi(\Omega_\nu(A_{33}))
\end{pmatrix}.
$$
By Theorem~\ref{corner-correspondence}, each hyper-maximal $q$-corner from $\phi$ to $\phi$ is associated to a unique hyper-maximal flow corner from $\alpha$ to $\alpha$. Thus, each of the hyper-maximal $q$-corners $\gamma_{\{x,X\}}$, $\gamma_{\{y,Y\}}$, and $\gamma_{\{x+y,XY\}}$ is  uniquely associated to  its corresponding hyper-maximal flow corner from $\alpha$ to $\alpha$, which we denote by $\alpha_{\{x,X\}}$, $\alpha_{\{y,Y\}}$, and $\alpha_{\{x+y,XY\}}$.
By Proposition~\ref{submatrix-flow-corners}, $\beta$ is the positive $3
\times 3$ matrix of flow corners (Definition~\ref{def-corners}) given by
$$\beta=
\begin{pmatrix}
\alpha & \alpha_{\{y,Y\}} & \alpha_{\{x,X\}} ^* \\
\alpha_{\{y,Y\}} ^* & \alpha & \alpha_{\{x+y, XY\}} ^* \\
\alpha_{\{x,X\}} & \alpha_{\{x+y,XY\}} & \alpha
\end{pmatrix}.
$$

By Theorem~\ref{thm-corners}, $\beta$ corresponds to a unique positive $3 \times 3$ matrix
$C=(C_{ij})$ of contractive local flow $\alpha^d$-cocycles.  It follows from the
form of $\beta$ that
\begin{displaymath} C(t)=
\left( \begin{array}{ccc} I & C_{\{y,Y\}}(t) & C_{\{x,X\}}(t)^* \\
C_{\{y,Y\}}(t)^* & I & C_{\{x+y, XY\}}(t)^* \\ C_{\{x,X\}}(t) &
C_{\{x+y,XY\}}(t) & I  \end{array} \right)
\end{displaymath}
for all $t \geq 0$.  Since $C_{\{x,X\}}(t)$ and  $C_{\{y,Y\}}(t)$ are unitaries and
$C(t)$ is positive, it follows from Lemma~\ref{APP-2.16} that $C_{\{x+y,XY\}}(t)
= C_{\{x,X\}}(t) C_{\{y,Y\}}(t)$ for all $t \geq 0$.
\end{proof}

\providecommand{\bysame}{\leavevmode\hbox to3em{\hrulefill}\thinspace}
\providecommand{\MR}{\relax\ifhmode\unskip\space\fi MR }
\providecommand{\MRhref}[2]{%
  \href{http://www.ams.org/mathscinet-getitem?mr=#1}{#2}
}
\providecommand{\href}[2]{#2}

\end{document}